\newtheorem{theorem*}{Theorem}
\newtheorem{theorem}{Theorem}
\newtheorem{conjecture}{Conjecture}
\newtheorem{corollary}{Corollary}
\newtheorem{proposition}{Proposition}      
\newtheorem{lemma}{Lemma}
\newtheorem{remark}{Remark}
\newtheorem{definition}{Definition}  
\def\leq{\leqslant}
\def\geq{\geqslant}       
\begin{document}
\title{  Angles and a Classification of Normed Spaces   }   
\author{   VOLKER W. TH\"UREY      
               \\   Rheinstr. 91  \\  28199 Bremen, \ Germany       
               \thanks{volker@thuerey.de \ \ T: 49 (0)421/591777 }   }
\maketitle
\centerline{ 2010  AMS-classification:  \ 46B20, 52A10  }    
\centerline{ Keywords: \ generalized angle, normed space  } 
   \begin{abstract}
        \centerline{Abstract}
         We suggest a concept of generalized `angles' in arbitrary real normed vector spaces. 
         We give for each real number a definition of an  `angle'  by means of the shape of the unit ball. 
         They all  yield  the  well known Euclidean angle in the special case of real inner product  spaces. 
         With these different 
         angles we achieve  a classification of normed spaces, and we obtain a characterization of inner product
         spaces. Finally we consider  this construction  also  for a  generalization of normed spaces,  i.e.
         for spaces   which may have a non-convex   unit ball.     
   \end{abstract} 
	 \tableofcontents 
	 \newpage   	
	  	    \section{Introduction}      \ \ \  \ \ 
	   In a real  inner product space $ \left( X , <  .  | . > \right) $ it is well-known that 
	   the inner product can be expressed by the norm, namely for $ \vec{x} , \vec{y} \in X $,   
	    $ \vec{x} \neq \vec{0} \neq \vec{y} $, we can write
	  \begin{equation*}  
           <\vec{x} \ | \ \vec{y}> \  =  \ 
          \frac{1}{4} \cdot \left( \: \|\vec{x}+\vec{y}\|^{2} -   \|\vec{x}-\vec{y}\|^{2} \: \right)   
          \  =  \  \frac{1}{4} \cdot  \|\vec{x}\| \cdot \|\vec{y}\| \cdot 
          \left[  \ \left\| \frac{\vec{x}}{\|\vec{x}\|}   +  \frac{\vec{y}}{\|\vec{y}\|} \right\|^{2} \  - \
          \left\| \frac{\vec{x}}{\|\vec{x}\|}   -   \frac{\vec{y}}{\|\vec{y}\|} \right\|^{2} \  \right] \ .     
    \end{equation*} 
    Furthermore we have  for all  $ \vec{x},\vec{y} \neq \vec{0} $ the  usual Euclidean angle  
   \begin{align*} 
         \angle_{Euclid} (\vec{x}, \vec{y}) \  := 
         \arccos{\frac{< \vec{x} \: | \: \vec{y} >}{ \|\vec{x}\| \cdot \|\vec{y}\|}}   
         \  = \  \arccos{    \left( \ \frac{1}{4} \cdot   
         \left[ \ \left\| \frac{\vec{x}}{\|\vec{x}\|} + \frac{\vec{y}}{\|\vec{y}\|} \right\|^{2} \ - \
         \left\| \frac{\vec{x}}{\|\vec{x}\|} -  \frac{\vec{y}}{\|\vec{y}\|} \right\|^{2} \ \right] \ \right) } \ , 
   \end{align*}
        which is defined in terms of the norm, too.    
                             
    In this  paper we deal with generalized real normed vector spaces. We consider vector spaces   $X$  provided 
         with a { \it  `weight'}  or { \it `functional'} \ $\| \cdot \|$, that means we have a continuous  map \ 
         $\| \cdot \| : \ X  \longrightarrow \  \mathbbm{R}^{+}  \cup  \{ 0  \}$. \
         We assume that the weights are   { \it  `absolute homogeneous'}  or { \it `balanced'}, i.e. 
         $ \| r \cdot \vec{x} \| = |r| \cdot \|\vec{x}\| $  for  
         $ \vec{x} \in X ,  r \in   \mathbbm{R} $. \   We call such pairs \ $ ( X, \| \cdot \| ) $ \
         { \it  `balancedly weighted vector spaces'}, or for short \ \ $` \mathsf{BW} \ \text{spaces'} $\, . 
                       
    To avoid problems with a denumerator  $0$ we
         restrict our considerations to $ \mathsf{BW}$ spaces  which  are positive definite, i.e. 
         $\| \vec{x}\| = 0$ \ only for \ $ \vec{x} = \vec{0}$.
           
    Following the lines of an inner product       
    we define for each real number ${\bf \varrho}$ a continuous product \  
               $ < . \: | \: . >_{\bf \varrho} $  \ on $ X $. \ Let   $ \vec{x}, \vec{y}$ be two arbitrary elements 
               of $ X $. In the case of  \ $ \vec{x} = \vec{0} $ \ or \ $ \vec{y} = \vec{0}  $ \ we set \ 
               \ $ < \vec{x} \: | \:  \vec{y} >_{\bf \varrho} \ := \, 0 $, \ and if \ 
               $ \vec{x} , \vec{y} \neq \vec{0} $  (i.e.  $ \|\vec{x}\| \cdot \|\vec{y}\| > 0 $) 
      \ we define the real number   \\   \\
               \centerline { $ < \vec{x} \: | \:  \vec{y} >_{\bf \varrho}  \ \ :=  $  } 
     \begin{equation*}  
                 \|\vec{x}\| \cdot \|\vec{y}\| \cdot   \frac{1}{4} \cdot
                 \left[ \left\| \frac{\vec{x}}{\|\vec{x}\|}   +  \frac{\vec{y}}{\|\vec{y}\|} \right\|^{2}   - 
                 \left\| \frac{\vec{x}}{\|\vec{x}\|} - \frac{\vec{y}}{\|\vec{y}\|} \right\|^{2}  \right] \cdot
                 \left(   \frac{1}{4} \cdot 
                 \left[  \left\| \frac{\vec{x}}{\|\vec{x}\|}   +  \frac{\vec{y}}{\|\vec{y}\|} \right\|^{2}  + 
                 \left\| \frac{\vec{x}}{\|\vec{x}\|} - \frac{\vec{y}}{\|\vec{y}\|} \right\|^{2}   \right]
                                                        \right)  ^{\bf \varrho}  \ .
     \end{equation*}              
           
        It is easy to show that such product fulfils  the symmetry  ($ <\vec{x} | \vec{y}>_{\bf \varrho}$   
        $ \ = \ <\vec{y} | \vec{x}>_{\bf \varrho} $), the positive semidefiniteness 
        ($ <  \vec{x} | \vec{x} >_{\bf \varrho} \ \geq \ 0 $), and  the homogeneity 
        ($  < r \cdot \vec{x} | \vec{y} >_{\bf \varrho} \ = \ r \cdot <  \vec{x} | \vec{y} >_{\bf \varrho} $), 
        for \ $  \vec{x},  \vec{y} \in X $,  \  $ r \in \mathbbm{R}$.

   Let us fix  a  number ${\bf \varrho} \in  \mathbbm{R}$ and a positive definite 
        $ \mathsf{BW}$ space $ ( X, \| \cdot \| )$.  \  We are able to define for two vectors \ 
	      $ \vec{x},  \vec{y} \neq  \vec{0}  $ \ with the additional property \ \  
	      $ |< \vec{x} \: | \: \vec{y} >_{\bf \varrho}| \ \leq \ \|\vec{x}\| \cdot \|\vec{y}\| $ \ \
	      an  `angle'  $\angle_{\bf \varrho} (\vec{x}, \vec{y})$, 
	      according to the Euclidean angle in inner product spaces. \
	      Let  
	  \begin{equation*}   
	   \angle_{\bf \varrho}  (\vec{x}, \vec{y}) \ := \
           \arccos{\frac{< \vec{x} \: | \: \vec{y} >_{\bf \varrho}} { \|\vec{x}\| \cdot \|\vec{y}\| } }  \ .
    \end{equation*}  
                           
	  We consider mainly those pairs $ ( X, \| \cdot \|) $ \
	       where the triple \  $ \left( X , \| \cdot \|, \ < . \: | \: . >_{\bf \varrho}  \right) $  \ 
         satisfies the { \it Cauchy-Schwarz-Bunjakowsky Inequality } or { \sf CSB }inequality, \ that means  
         for all \  $ \vec{x}, \vec{y} \in X $ \  we have the inequality \ 
   $$ |< \vec{x} \: | \: \vec{y} >_{\bf \varrho} |  \ \leq \ \|\vec{x}\| \cdot \|\vec{y}\| \ , $$ 
        for a fixed  real number $ {\bf \varrho}$. 
        In this case  we get that the   `$\varrho$-{\it angle}'  $ \angle_{\bf \varrho} (\vec{x}, \vec{y}) $ \ 
        is defined for all \  $ \vec{x}, \vec{y} \neq \vec{0} $, and we shall express this by 
   $$ ` \ \text{The space} \ \left( X , \| \cdot \| \right) \ 
        \text{ has the angle } \  \angle_{\bf \varrho} \, \text{'} ,         $$ 
  This new `angle' has seven comfortable properties which are known from the Euclidean angle in inner product spaces,
     and it corresponds for all  ${\bf \varrho} \in  \mathbbm{R}$ to the Euclidean angle in the case that \
     $ \left( X , \| \cdot \| \right) $ \ already is an inner product space. 
                                              
   Let \  \ $ ( X , \| \cdot \| ) $ \ be a real positive definite  $ \mathsf{BW}$ space  with \ dim$(X) > 1 $.  
         Assume that the triple \  $ \left( X , \| \cdot \|, \ < . \: | \: . >_{\bf \varrho}  \right) $  \ 
         satisfies  the { \sf CSB }inequality for a fixed number $ {\bf \varrho} $. Hence we are able to define the
          $ {\bf \varrho} $-angle, and for elements \ $ \vec{x},\vec{y} \neq \vec{0}$ we have the properties that
    \begin{itemize}
          \item    \quad \ \ $\angle_{\bf \varrho} $  is a continuous surjective map 
                from \  \ $  (X \backslash \{\vec{0}\}) ^{2} $ \ onto the closed interval \ $ [0,\pi] $, 
          \item    \quad   \    \  $\angle_{\bf \varrho} (\vec{x}, \vec{x}) = 0 $,   
          \item    \quad  \   \ $\angle_{\bf \varrho}  ( -\vec{x}, \vec{x}) = \pi$,   
          \item    \quad   \  \  $\angle_{\bf \varrho}  (\vec{x}, \vec{y}) 
                = \angle_{\bf \varrho}  (\vec{y}, \vec{x}) $, 
          \item    \quad  \ \ For all \ $ r,s > 0 $ \
                 we  have  \ \ $\angle_{\bf \varrho} (r \cdot \vec{x}, s \cdot \vec{y}) 
                 = \angle_{\bf \varrho}  (\vec{x},\vec{y})$, 
          \item    \quad  \ \ $\angle_{\bf \varrho} ( - \vec{x}, - \vec{y}) 
          = \angle_{\bf \varrho}  (\vec{x},\vec{y})$,    
          \item    \quad \  $ \angle_{\bf \varrho} (\vec{x}, \vec{y}) + \angle_{\bf \varrho} (-\vec{x}, \vec{y}) 
                  = \pi $,    
    \end{itemize} 
      which are easy to prove.  \\
       
   Then we define some classes of real vector spaces. \
        Let  $ \mathsf{NORM}$ \ be the class of all real normed vector spaces. \
	      For all fixed real numbers ${\bf \varrho}$ let 
   $$ \mathsf{NORM}_{\bf \varrho} \ := \ \left\{ (X , \| \cdot \|) \in \mathsf{NORM} \ | \ 
               \text{ The normed space } (X , \| \cdot \|)  \text{ has the angle } \angle_{\bf \varrho} \right\}.  $$     
	We prove the statements  
		$$  \mathsf{NORM} \ = \ \mathsf{NORM}_{\bf \varrho}    $$
		             for all real numbers $\varrho$ from the closed interval \ $[-1 , 1 ]$,  \ and also  
	  $$ \mathsf{IPspace} \ = \ \bigcap_{ {\bf \varrho} \in { \bf \mathbbm{R}} } \ \mathsf{NORM}_{\bf \varrho} \ , $$
	  where \  $\mathsf{IPspace}$ denotes the class of all real inner product spaces. \ 
		Furher, if we assume four positive real  numbers \ $\alpha, \beta, \gamma , \delta $ \	such that  \\ 
	                \centerline{   $ -\delta < -\gamma < -1 <  1 < \alpha <  \beta $, }  \\
	  we obtain the chain of inclusions 
	  $$       \mathsf{NORM}_{\bf -\delta} \subset   \mathsf{NORM}_{\bf -\gamma} \subset \mathsf{NORM} 
	           \supset  \mathsf{NORM}_{\bf \alpha} \supset \mathsf{NORM}_{\bf \beta} \, . $$ 
	  We prove the inequalities \ \    
	  $$       \mathsf{NORM}_{\bf -\gamma} \neq  \mathsf{NORM} \neq  \mathsf{NORM}_{\bf \alpha}       $$      
	  and we strongly believe, but we have no proof that the  inclusions 
	     \ $    \mathsf{NORM}_{\bf -\delta} \subset   \mathsf{NORM}_{\bf -\gamma} $ \ and \ 
	       $    \mathsf{NORM}_{\bf \alpha} \supset \mathsf{NORM}_{\bf \beta}  $ \  are proper.  \\
	  
	 After that we return to the more general situation. We abandon the restriction of the triangle
	           inequality, again we consider  positive definite $ \mathsf{BW}$ spaces $ (X , \| \cdot \|) $, 
	           i.e. its weights $\| \cdot \|$  have to be positive definite   and absolute homogeneous only.
    We say \ { \it `positive definite balancedly weighted spaces' }  or \ $ \mathsf{pdBW}$ \ for the class
              of all such pairs, and we define  for all fixed real number  $ \varrho $ the class
   $$ \mathsf{pdBW}_{\bf \varrho} \ := \ \left\{ (X , \| \cdot \|) \in \mathsf{pdBW} \ | \ 
   \text{ The space} \ (X , \| \cdot \|)  \text{ has the angle }  \angle_{\bf \varrho} \right\}.  $$  
             We show \ \ 
             $  \mathsf{pdBW}_{\bf -1}  =  \mathsf{pdBW} $. \  Roughly speaking this means that for the angle 
             $ \angle_{\bf \varrho}$ the `best' choice is \ $ \bf \bf \varrho = -1 $, \ 
             since the angle  \ $ \angle_{\bf -1}$ \  is defined in every element of \ $ \mathsf{pdBW}$.  
                   
      For  real  numbers \ $\alpha, \beta, \gamma , \delta $ \	 with \
	           $ -\delta < -\gamma < -1  < \alpha <  \beta $  \ we get the inclusions   
   $$        \mathsf{pdBW}_{\bf -\delta} \subset   \mathsf{pdBW}_{\bf -\gamma} \subset  \mathsf{pdBW} 
             \supset  \mathsf{pdBW}_{\bf \alpha} \supset \mathsf{pdBW}_{\bf \beta} \, . $$            
      Since \ $ \mathsf{NORM}_{\bf -\gamma} \neq  \mathsf{NORM} $ \ we already know the fact \ 
	           $   \mathsf{pdBW}_{\bf -\gamma} \neq  \mathsf{pdBW} $.  \  Further we prove  \ 
	           $ \mathsf{pdBW} \neq \mathsf{pdBW}_{\bf \alpha} $, \ and we conjecture that the inclusions \  
             $        \mathsf{pdBW}_{\bf -\delta} \subset   \mathsf{pdBW}_{\bf -\gamma} $ \ and also \ 
             $        \mathsf{pdBW}_{\bf \alpha} \supset \mathsf{pdBW}_{\bf \beta}  $ \  are proper.  
                             
    To prove the above statements we define and use         
        `{\it convex corners}' which can occur even in normed vector spaces, and     
        `{\it concave  corners}' which can be vectors in $ \mathsf{BW}$ spaces which are not normed spaces. 
        Both expressions are mathematical descriptions of a geometric shape exactly what the names associate. 
        For instance, the well-known normed space \ $ (\mathbbm{R}^{2}, \| \cdot \|_{1}) $ with the norm \
        $ \| (x,y) \|_{1} = |x| + |y| $ \ has four convex corners at its unit sphere, they are just the  corners of the
        generated square.   
                    
    Further we introduce a function  $ \Upsilon $, 
  $$    \Upsilon:  \mathsf{pdBW} \longrightarrow \left[ -\infty, -1 \right] \times \left[ -1 , +\infty \right] \, , 
        \ \  \Upsilon( X , \| \cdot \| )  := ( \nu, \mu ) \ , $$
        which maps every real positive definite   $ \mathsf{BW}$ space to a pair of extended real  numbers 
        $ ( \nu , \mu )$, \  where
     \begin{align*}         
       & \nu  := \inf \{ {\bf \varrho} \in  \mathbbm{R} \ | \  ( X ,  \| \cdot \| ) \text 
       { has the angle }  \angle_{\bf \varrho} \}, \ \text{ and } \\ 
       & \mu  := \sup \{ {\bf \varrho} \in  \mathbbm{R} \ | \  ( X ,  \| \cdot \| ) \text 
       { has the angle }  \angle_{{\bf \varrho}}\}   \ .  
     \end{align*} 
   For an inner product space $( X , \| \cdot \| )$  we get immediately \ 
       $ \Upsilon( X , \| \cdot \| )  \ = \ ( -\infty , \infty ) $.   
   
   If  $( X ,  \| \cdot \| ) $ is an arbitrary space from the class  $\mathsf{pdBW}$ 
        with $ -\infty < \nu, \mu < \infty $, \ we show  that the infimum and the supremum are attained, i.e.
     \begin{align*}         
       &  \nu  = \min \{ {\bf \varrho} \in  \mathbbm{R} \ | \  ( X ,  \| \cdot \| ) \text 
       { has the angle }  \angle_{{\bf \varrho}}\}, \ \text{ and } \\ 
       &  \mu  = \max \{ {\bf \varrho} \in  \mathbbm{R} \ | \  ( X ,  \| \cdot \| ) \text 
       { has the angle }  \angle_{{\bf \varrho}}\}   \ .  
     \end{align*}       
        Let $ ( X ,  \| \cdot \| ) \in  \mathsf{NORM} $. We assume that $( X, \| \cdot \| )$ has a convex
        corner. \  Then we get  
   $$              \Upsilon( X ,  \| \cdot \| ) = ( -1 , 1 ) \ .  $$  
   For instance, for the normed space \ $ \left(\mathbbm{R}^{2}, \| \cdot \|_{1}\right) $ \ we have \ 
       $ \Upsilon( \mathbbm{R}^{2}, \| \cdot \|_{1}) = (-1,1) $.  \\
                                                    
   At the end we consider  products. \ For two spaces  
      $ (A , \| \cdot \|_A), (B , \| \cdot \|_B) \ \in  \mathsf{pdBW}$ we take its Cartesian product $ A \times B $,
      and we get a  set of balanced weights  $ \| \cdot \|_p $ on $ A \times B $, \ for \ $ p > 0 $. \
      For a positive number \ $ p $ for each element $ \left(\vec{a}, \vec{b} \right) \in A \times B$  we define 
      the non-negative  number
  $$ \left\|\left(\vec{a}, \vec{b}\right) \right\|_{p} \ 
                                            := \ \sqrt[p]{\|\vec{a} \|_{A}^{p} + \|\vec{b}\|_{B}^{p}} \ .  $$
    This makes the pair \ $ \left( A \times B, \left\| \, \cdot \, \right\|_{p} \right) $ \ to an element of   
    the class $ \mathsf{pdBW}$, and with this construction we finally ask two more interesting 
    and unanswered questions.  
     \newpage  
  \section{General Definitions} 
	            \ \ \  \ \ \ \       
    Let    \ $ X $ =  ($ X , \tau$) \  be an  arbitrary   real topological  vector  space,  \  that  means  that 
        the real  vector space   $ X $  is  provided with a topology  $ \tau $ such that  the addition  of two vectors
        and  the  multiplication  with real numbers  are continuous.  Further let \  $\| \cdot \|$ \ denote a 
        {\it positive functional} or a   {\it weight}  on $ X$, that means that \  
        $\| \cdot \|$: \ $ X \longrightarrow$  $\mathbbm{R}^{+}  \cup  \{ 0  \}$ \  is  continuous, the non-negative 
        real numbers \ $\mathbbm{R}^{+} \cup \{0\}$ \ carry  the  usual  Euclidean topology. 
                                                        
             We consider some  conditions.       \\  
       $\widehat{(1)}$:  \    For  all \ $ \it r \  \in \ \mathbbm{R}$ \ \rm \ and  all   \ \ 
                   \it   $\vec{x}  \in X$ \  \rm we have: \
                   \it  $\|r \cdot  \vec{x}\|$ =   $|r|$ $ \cdot  \|\vec{x}\|$ \quad  \hfill  \rm (`absolute  
                       homogeneity'),    \\  
       $\widehat{(2)}$:  \    \rm  $\| \vec{x}\| = 0$ \quad  { \rm if and only if} \quad  $ \vec{x} = \vec{0}$  
                   \hfill  \quad \rm (`positive \ definiteness'), \\
       $\widehat{(3)}$:  \    \rm  For \  \it  $\vec{x} , \vec{y} \in X$ \ \rm \  it  holds \  
                   \it   $\|\vec{x}+\vec{y}\| \leq  \|\vec{x}\|  + \|\vec{y}\|$ 
                                                             \hfill \quad \rm (`triangle \ inequality'),   \\
       $\widehat{(4)}$: \     \rm  For \  \it  $\vec{x} , \vec{y} \in X$ \ \rm  \  it holds \ 
                   \it   $\|\vec{x}+\vec{y}\|^{2} +   \|\vec{x}-\vec{y}\|^{2}  =   2 \cdot [  \|\vec{x}\|^{2}  
                   + \|\vec{y}\|^{2}  ] $     \hfill \ \rm (`parallelogram identity').   \\
   $\begin{array}[ ]{lll}  \rm   
        \rm If \   \| \cdot \|  \  fulfils \ \widehat{(1)} , & \text{then we call} \ \| \cdot \| \   
                                             \text{a {\it balanced  weight}  on} \ {\it X},  \\  
        \rm if \   \| \cdot \|  \  fulfils \ \widehat{(1)} , \widehat{(3)} & \text{then} \ \| \cdot \| \ \
                \text{is  called a { \it seminorm } on { \it X}, \  and }  \\ 
        \rm if \   \| \cdot \|  \  fulfils \ \widehat{(1)} , \widehat{(2)} , \widehat{(3)} &  
                   \text{then}  \ \| \cdot \| \ \text{is  called a { \it norm } on \ {\it X}, \ and }     \\ 
        \rm if \   \| \cdot \|  \  fulfils \ \widehat{(1)}, \widehat{(2)}, \widehat{(3)}, \widehat{(4)}
                   & \text{the pair} \  (X, \| \cdot \|) \  \text{is called an {\it inner product space}} . 
   \end{array} $  \\
      According to this four cases we call                                                           
         the pair \ ($ X , \| \cdot \|$) \ a { \it balancedly weighted vector space}  (or $ \mathsf{BW}$ space), 
         a  { \it seminormed vector space}, a  { \it normed vector space},  or an 
         { \it inner product space} (or ${ \mathsf{IP}}$ space),  respectively.  See also the interesting paper
         {\bf \cite{RubinStone}} where it has been shown that \  $\widehat{(1)},  \widehat{(2)}, \widehat{(4)} $ 
         is sufficient to get  $\widehat{(3)}$, and therefore to get an inner product space. \
                 
      We  shall  restrict our considerations mostly to $ \mathsf{BW}$ spaces which  are positive definite, i.e. 
         $\| \vec{x}\| = 0$  only for $ \vec{x} = \vec{0}$,  i.e. they satisfy  $\widehat{(2)}$.  
         That means in a pair \ ($ X , \| \cdot \|$) \ the weight \ $ \| \cdot \|$ \  is always positive definite,
         except we say explicitly the contrary.   
    \begin{remark}   \rm
         In a positive definite  $ \mathsf{BW}$ space \ $( X, \| \cdot \|$) \ we can generate a 
         `distance' \ $d$\  by \ $d(\vec{x}, \vec{y}) :=  \|\vec{x}- \vec{y}\| $. \ Note that generally the pair
         \ $( X , d ) $ is not a metric space.  
    \end{remark}       
             
  Now let \quad   $ < . \:  | \: . > \ : \  X^{2} \longrightarrow   \mathbbm{R} $ \  be continuous 
	      as  a  map  from  the product space  $ X \times X $ \ into the Euclidean space  \ $ \mathbbm{R}. $    
	 
	 We  consider some  conditions.  \\
	 $\overline{(1)}$: \ \   For  all  $ \it r  \in  \mathbbm{R}$ \ \rm  and   
                    \it  $\vec{x}, \vec{y}   \in X $ \  \rm  it holds  \
                    \it  $ <r \cdot \vec{x} \ | \ \vec{y}> \ = \ r \cdot <\vec{x} \ | \ \vec{y}>$    
                     \   $   { \ } $  {  $ \ $ }         \hfill \rm     (`homogeneity'),   \\
   $\overline{(2)}$:  \  \  \rm  For \ all \ \ \it  $\vec{x} , \vec{y} \in X$ \ \rm  it holds \  \  \ 
                     \it  $ <\vec{x} \ | \ \vec{y}>  \  = \ <\vec{y} \ | \ \vec{x}>   $ 
                                                     \hfill \quad \rm (`symmetry'),   \\    
   $\overline{(3)}$: \ \    For  all \  $  \vec{x}  \in X$ \  \rm  we have \
                    \it  $ <\vec{x} \ | \ \vec{x}> \  \  \geq \  0  $
                                                      \hfill  \quad \rm  (`positive semidefiniteness'),       \\           $\overline{(4)}$: \ \  \rm  $  <\vec{x} \ | \ \vec{x}> \  =  0  $ \quad  { \rm if and only if} 
                        \quad  $ \vec{x} = \vec{0}$  
                                                      \hfill  \quad \rm (`definiteness'), \\
   $\overline{(5)}$:  \  \  \rm  For \ all \ \it  $\vec{x} , \vec{y} ,  \vec{z} \in X$ \ \rm  it holds \quad  
                 $<\vec{x} \ | \ \vec{y}+\vec{z}> \ = \    <\vec{x} \ | \ \vec{y}>  + <\vec{x} \ | \ \vec{z}>$  
                   \hfill \ \rm (`linearity').
   $ \begin{array}[ ]{l}  \rm                               
            \text{If} \   < . \: | \: . > \  \text{fulfils} \ \overline{(1)},\overline{(2)},\overline{(3)} , \  
            \text{we  call  \ $< . \: | \: . >$ \  a {\it homogeneous product}  on } {\it X},  \\ 
            \text{if} \   < . \: | \: . >   \  \text{fulfils}  \overline{(1)},\overline{(2)},\overline{(3)}, 
            \overline{(4)},\overline{(5)}, \ \text{then} \ < . \: | \: . > \ 
            \text {is  an { \it inner  product}  on  {\it X}}.    
      \end{array}    $   \\
   According to these  cases we call                                                           
            the pair \ ( $ X , < . \: | \: . >$ ) \ a  { \it homogeneous product vector space}, 
            or an  { \it inner product space } (or ${\mathsf  {IP}}$ space),  respectively.       
   \begin{remark}  \rm
      We use the term  \ `${\mathsf  {IP}}$ space' \ twice, but both definitions coincide: It is well-known that a  norm 
         is based on an inner product   if and only if the parallelogram identity holds. 
   \end{remark}                                   
   Let \   $\| \cdot \|$ \  be denote a positive functional on $ X$. \  Then we define two closed subsets of   $ X$: \\
           $ {\bf S} \ := {\bf S}_{(X,\| \cdot \|)} \ :=  \  \{ \: \vec{x} \in X \ |  \ \|\vec{x}\| = 1 \: \}$, \
           the {\it unit sphere} of $X$, \\
           $ {\bf B} \ := {\bf B}_{(X,\| \cdot \|)} \ :=  \  \{ \: \vec{x} \in  X \ |  \ \|\vec{x}\| \leq 1 \: \} $, \
           the {\it unit ball} of $X$. 
   
    Now assume that the real vector space \ $X$ \ is provided with a positive functional \ $ \| \cdot \|$ \ and a
           product  \  $ < . \: | \: . > $.  \  Then the triple \  $( X , \| \cdot \| ,  < . \: | \: . > ) $ \ 
           satisfies the { \it Cauchy-Schwarz-Bunjakowsky Inequality } or { \sf CSB }inequality \ if and only if \
           for all \ $ \vec{x}, \vec{y} \in X $ \ we have the inequality \\
        \centerline{   $ |< \vec{x} \: | \: \vec{y} >| \ \leq \ \|\vec{x}\| \cdot \|\vec{y}\| $.  }
                                       
    Let  \ $ A $ \ be an arbitrary subset  of a  real vector space \ $ X $.  \ 
            Let $ A $ \  has the  property  that 
            for  arbitrary \ $ \vec{x} , \vec{y}  \in A $ \ and for  every  \ $ 0 \leq t \leq 1 $ \ we have \ 
	          $ t \cdot \vec{x} + (1-t) \cdot \vec{y} \in A. $  \ Such a  set $ A $ is called  { \it  convex}.
	          The unit ball \ $ {\bf B} $ \ in a seminormed space is convex because of the triangle inequality.  
	                                   
    A convex set  $ A $ is called { \it strictly convex} if and only if for each number $ 0 < t < 1 $ \ 
            it holds that the linear combination \ $ t \cdot \vec{x} + (1-t) \cdot \vec{y}$ \ lies in the interior of 
            $ A $, for all vectors $ \vec{x},  \vec{y} \in A $.           
	%
                                            
    For two real numbers \ $ a < b $ \ the term \ $[a, b]$ \ means the closed interval of \ $a$ and $b$, \
         while  $(a,b)$ \ means the pair of two numbers or the open interval between  $a$ and $b$.    
   \section{Some Examples of Balancedly Weighted Vector Spaces}   \qquad  \label{section three}
   We describe some easy examples of balanced weights on the usual vector space $ \mathbbm{R}^{2}$. 
   At first, for each  $ p \in  \{\infty, -\infty \} \cup \mathbbm{R} $ \ we construct a 
   balanced weight  $\| \cdot \|_{p}$ on $ \mathbbm{R}^{2}$.  
   For  $\vec{x} = (x , y ) \in  \mathbbm{R}^{2} $ \ for a  real number \ $ p > 0 $ \ we set  
   $\|\vec{x}\|_{p}  := \sqrt[p]{|x|^{p}+|y|^{p}}$, \
   and   for the negative real number  $-p$  we define correspondingly 
    \rm  \[   \|\vec{x}\|_{-p}  :=             
             \begin{cases}
           \sqrt[-p]{|x|^{-p}+|y|^{-p}}   &    \quad \mbox{if} \quad       x \cdot y \neq 0   \\    
            \ \                       0   &    \quad \mbox{if} \quad       x \cdot y  = 0   \\            
                         \end{cases}        \]   
    \rm    and let \  \ $\|\vec{x}\|_{\infty} := \max \{ |x|\: , |y| \} , $   \ \
          and  \  \ $\|\vec{x}\|_{-\infty} := \min \{ |x|\: , |y|  \} $. \  For convenience, we define 
     for \ $ p = 0 $ \ the trivial seminorm \  $\|\vec{x}\|_{0} = 0$ \ for all \ $\vec{x} \in  \mathbbm{R}^{2} $. \\     
     These  weights  $\| \cdot \|_{p}$  are called  the { \it H\"older weights }  on   $\mathbbm{R}^{2}$.  
           Since   $ \| \cdot \|_{p}$ fulfils $\widehat{(1)}$ , the pair  $ ( \mathbbm{R}^{2} , \| \cdot \|_{p})$ 
           is a   $ \mathsf{BW}$ space for each $p \in \{\infty, -\infty \} \cup \mathbbm{R} $.
  \begin{remark}  \rm  The above definition for negative real numbers $-p$ may be  clearer if one  notes  
        $$  \|\vec{x}\|_{-p} \ = \ \sqrt[-p]{|x|^{-p}+|y|^{-p}} 
            \ = \ \left[ \sqrt[p]{\frac{1}{|x|^{p}}+\frac{1}{|y|^{p}}} \ \right]^{-1} 
            \ = \ \frac{ |x| \cdot |y|} { \sqrt[p]{|x|^{p}+|y|^{p}}} 
             \qquad \text{if} \quad x \cdot y \neq 0 \ .                $$
        For $ -p < 0$  \ we have \  $\| (x,y) \|_{-p} =  0 $ \ if and only if   $x = 0$ or $y = 0$.
        That means \ $ (x,y) $ lies on one of the two axes.    
	      Furthermore, for arbitrary  numbers $ p \in \{\infty, -\infty \} \cup \mathbbm{R} $, 
        we have that the pair $( \mathbbm{R}^{2},  \| \cdot \|_{p}) $ is a normed space if and only if $ p \geq 1$, 
        (the {\it H\"older norms} ), and the weight $\| \cdot \|_{p}$ is positive definite if and only if \ $p > 0 $. 
        For  $ p  = 2$ we get the usual and well-known Euclidean norm  $\| \cdot \|_{Euclid}$ . 
   \end{remark}  
    Other interesting non-trivial examples are the following, the first is not positive definite.  \ \
	 Let \  $(\mathbbm{R}^{2},\| \cdot \|_A ) $ \  be a $ \mathsf{BW}$ space with the unit sphere   
	            $ { \bf S}_A $, the set of unit vectors, and define   \\
	  	   \centerline{   $ {\bf S}_A 
	           \ := \ \left\{ (x,y) \in 	\mathbbm{R}^{2} \ | \ |x| \cdot |y| = 1 \right\} $, }
	    and extend the weight  $ \| \cdot \|_A$ by homogeneity. Every point on the axes has the weight zero.
	                                       
	 The next two examples are positive definite, but weird. 
	   At first we define a weight $ \| \cdot \|_B$ \ on  $\mathbbm{R}^{2}$ by defining the unit sphere  ${\bf S}_B $, \\ 
	      \centerline{  $ {\bf S}_B  \ := \ \left\{ (x,y) \in 	\mathbbm{R}^{2} \ | \ 
	                            \sqrt{|x|^{2}+|y|^{2}} = 1 \ \wedge (x,y) \notin \{(1,0),(-1.0)\}  \right\}
	                           \ \bigcup  \ \left\{ (2,0), (-2,0)   \right\}  $, }    \\  
	    and we extend the weight  $ \| \cdot \|_B$ by homogeneity.     \\
	    In a similar way the next weight  $ \| \cdot \|_C$  is constructed by fixing the unit sphere  ${\bf S}_C $, \\
	       \centerline{  $ {\bf S}_C  \ := \ \left\{ (x,y) \in 	\mathbbm{R}^{2} \ | \ 
	                 \sqrt{|x|^{2}+|y|^{2}} = 1 \ \wedge (x,y) \notin \{(1,0),(-1.0)\}  \right\}
	                 \ \bigcup \ \left\{ \left(\frac{1}{2},0\right), \left(-\frac{1}{2},0\right) \right\}$, }  \\  
	    and  extending the weight  $ \| \cdot \|_C$ by homogeneity.     \\
	    The pairs $ \left( \mathbbm{R}^{2},\| \cdot \|_B \right) $  and $ \left( \mathbbm{R}^{2},\| \cdot \|_C \right)$
	    are positive definite $ \mathsf{BW}$ spaces.  \\
	                         
	    Some definitions and discussions to separate these strange examples  
	    $ \left(\mathbbm{R}^{2},\| \cdot \|_B \right) $ 
	    and    $ \left( \mathbbm{R}^{2},\| \cdot \|_C \right) $   from the others  would be desirable. 
             
   \section{On Angle Spaces} 
             \qquad                   
  	In the usual Euclidean plane  $\mathbbm{R}^{2}$ angles are  considered for more than 2000 years. With the idea 
  	       of    `metrics' and `norms' others than the Euclidean one  the idea came to have also 
	         orthogonality and  angles in generalized metric and normed spaces, respectively. 
	         The first attempt to define a 
	         concept of generalized `angles' on metric spaces was made by Menger  { \bf \cite{Menger}}, p.749 .
        	 Since then a  few ideas have been developed, see the references { \bf \cite{Diminnie/Andalafte/Freese1}},
	         { \bf \cite{Diminnie/Andalafte/Freese2}},  { \bf \cite{Gunawan}}, { \bf \cite{Milicic1}},
	         { \bf \cite{Milicic2}}, { \bf \cite{Milicic3}},  { \bf \cite{Singer}}, {\bf \cite{J.E.Valentine/C.Martin}},
	         { \bf \cite{J.E.Valentine/S.G.Wayment}} . 
	  In this paper we focus our attention on  real normed spaces as a generalizitation of real inner product spaces. \  
	         Let  $ \left(  X , < . \: | \: . > \right)$  be an  ${\mathsf  {IP}}$ space, and let \ $ \| \cdot \| $ \ be 
	         the associated  norm,  $ \|\vec{x}\| := \sqrt{< \vec{x} | \vec{x} >} $, 	 then 	the  triple 
	         $\left( X ,  \| \cdot \|  ,  < . \: | \: . >  \right) $ \  fulfils the  { \sf CSB } inequality, 
	         and we have  for all \  $ \vec{x},\vec{y} \neq \vec{0} $ \ the well-known  Euclidean angle   \ \   
           $\angle_{Euclid} (\vec{x}, \vec{y}) := 
           \arccos{\frac{< \vec{x} \: | \: \vec{y} >}{ \|\vec{x}\| \cdot \|\vec{y}\|}}$  \ \ 
           with all its comfortable properties \ ({\tt An} 1) - ({\tt An} 11).   
  	\begin{definition}    \rm      \label{Definition eins}
           Let  ($ X , \| \cdot \|$)  be a real positive definite $ \mathsf{BW}$ space.
           We call  the triple $ \left(X , \| \cdot \|,\angle_X \right)$  an  { \it angle space }  if and only if 
           the following seven conditions ({\tt An} 1)  $-$  ({\tt An} 7)  are fulfilled.
           We regard vectors \ $ \vec{x}, \vec{y} \neq \vec{0} $. 
    \begin{itemize}
        \item  ({\tt An} 1) \quad $\angle_{X}$ \ \ is a continuous  map  
                  from \ $  (X \backslash { \{\vec{0}\}}) ^{2} $ \  into the interval \ $ [0,\pi] $. \
                  We say that \\
                  \centerline{ $ `\left( X, \| \cdot \| \right) \text{ has the angle } \ \angle_{X} \, \text{'} . $ } 
        \item    ({\tt An} 2) \quad  For all \ \ $ \vec{x} $ \ we have 
                  \  $\angle_{X}(\vec{x}, \vec{x}) = 0 $.   
        \item     ({\tt An} 3) \quad  For all \ \ $ \vec{x} $ \ we have 
                  \ $\angle_{X} ( -\vec{x}, \vec{x}) = \pi$.   
        \item  ({\tt An} 4) \quad  For all \ \ $ \vec{x},\vec{y} $ \ 
                  we have \  $\angle_{X} (\vec{x}, \vec{y}) = \angle_{X} (\vec{y}, \vec{x}) $. 
        \item   ({\tt An} 5) \quad  For all \ $ \vec{x},\vec{y} $ \ and all \ $ r,s > 0 $ \ 
                  we  have  \    
                  $\angle_{X} (r \cdot \vec{x}, s \cdot \vec{y}) = \angle_{X} (\vec{x},\vec{y})$.
        \item   ({\tt An} 6) \quad For all \  $ \vec{x},\vec{y}  $ \  \  we  have 
                  \  $\angle_{X} ( - \vec{x}, - \vec{y}) = \angle_{X} (\vec{x},\vec{y})$.          
        \item   ({\tt An} 7) \quad   For all \  $ \vec{x},\vec{y} $ \ \   we  have 
                  \  $ \angle_{X}(\vec{x}, \vec{y}) + \angle_{X}(-\vec{x}, \vec{y})  = \pi $.               
      \end{itemize}  
    \end{definition}             
    Furthermore we write down some more properties which seem to us `desirable', but `not absolutely necessary'.       
   	\begin{itemize}
         \item   ({\tt An} 8) \quad  For all \ $\vec{x},\vec{y}, \ \vec{x}+\vec{y} \in X \backslash {\{\vec{0}\}}$ \ 
                  we    have    \  $\angle_{X}(\vec{x}, \vec{x}+\vec{y}) + 
                  \angle_{X}(\vec{x}+\vec{y},\vec{y}) =   \angle_{X}(\vec{x},\vec{y})$. 
         \item  ({\tt An} 9) \quad For all $\vec{x},\vec{y}, \ \vec{x}-\vec{y} \in X \backslash {\{\vec{0}\}}$  \ 
                  we  have    \ $ \angle_{X}(\vec{x},\vec{y})  + \angle_{X}(-\vec{x},
                  \vec{y}-\vec{x}) +  \angle_{X}(-\vec{y},\vec{x}-\vec{y})  = \pi $. 
         \item  ({\tt An} 10)  \quad For all \ $\vec{x},\vec{y}, \ \vec{x}-\vec{y} \in X \backslash {\{\vec{0}\}}$ \ 
                  we  have  \ $\angle_{X}(\vec{y}, \vec{y}-\vec{x})
                  +  \angle_{X}(\vec{x},\vec{x}-\vec{y}) =  \angle_{X}(-\vec{x},\vec{y}) $.  
         \item ({\tt An} 11) \quad  For any two  linear independent vectors \ 
                $ \vec{x},\vec{y}  \in X $ \ we  have  a  decreasing  homeomorphism \\
                 \centerline{  $ \Theta :  \mathbbm{R} \longrightarrow  (0,\pi) , \ \
                 t \mapsto \angle_{X}(\vec{x}, \vec{y} + t \cdot\vec{x})$. }     
    \end{itemize}              
    \begin{remark}  \rm   \label{remark3} 
         We add another condition. \
          If \ $( Y , \| \cdot \| )$ \ is an \ ${\mathsf  {IP}}$ space,  and if the triple 
          $ \left(Y , \| \cdot \|,\angle_Y \right)$ is an  { `angle space' }, then it should hold that \ \ 
         $\angle_{Y} = \angle_{Euclid} $, \ i.e. the new angle should be a generalization of the Euclidean angle. 
    \end{remark}
    \newpage      
  	\section{An Infinite Set of Angles}  \label{section fuenf}	  \qquad                           
       Now assume that the real topological vector space \ $( X , \tau ) $ \ is provided with a positive 
               functional \ $ \| \cdot \|$ \ and a  product  \  $ < . \: | \: . > $.   Take two elements 
               $ \vec{x}, \vec{y} \in X $ with the two properties $ \|\vec{x}\| \cdot \|\vec{y}\| \neq 0 $ \ and \
               $ |< \vec{x} \: | \: \vec{y} >|  \leq  \|\vec{x}\| \cdot \|\vec{y}\| $.   \  
               Then we could define an angle between these two elements, \ \ $ \angle (\vec{x}, \vec{y}) :=
               \arccos{\frac{< \vec{x} \: | \: \vec{y} >}{ \|\vec{x}\| \cdot \|\vec{y}\| } }. $ \ \
               If  the triple \  $\left( X , \| \cdot \| ,  < . \: | \: . > \right) $ \ satisfy the 
               { \it Cauchy-Schwarz-Bunjakowsky Inequality } or { \sf CSB }inequality, then we would be
               able to define  for all $ \vec{x}, \vec{y} \in X $ with $ \|\vec{x}\| \cdot \|\vec{y}\| \neq 0 $ \
               this angle \  $ \angle_{ } (\vec{x}, \vec{y}) := 
               \arccos{\frac{< \vec{x} \: | \: \vec{y} >}{ \|\vec{x}\| \cdot \|\vec{y}\| }} \ \in [0, \pi]$. 
  
     Let  the pair \ $( X , \| \cdot \| ) $  be a real  $ \mathsf{BW}$ space, i.e. the weight \ $\| \cdot \|$
               is absolute  homogeneous, or { \it `balanced'}. \ Notice again that we only deal with positive definite
               weights  $ \| \cdot \| $. 
   \begin{definition}  \rm   \label{naechste Definition}  
     Let  ${\bf \varrho}$  be an arbitrary real number. We define a continuous product \ \ 
               $ < . \: | \: . >_{\bf \varrho} $  \ on $ X $. \ Let  \ $ \vec{x}, \vec{y}$ be two arbitrary elements 
               of $ X $. In the case of  $ \vec{x} = \vec{0}$  or  $ \vec{y} = \vec{0}$  we set \ 
               $ < \vec{x} \: | \:  \vec{y} >_{\bf \varrho} \ := \, 0 $,  and if  \ 
               $ \vec{x}, \vec{y} \neq \vec{0} $ (i.e. $ \|\vec{x}\| \cdot \|\vec{y}\|  \neq  0 $) \
               we define the real number   \ $ < \vec{x} \: | \:  \vec{y} >_{\bf \varrho}  \ :=  $   
     \begin{equation}  
                 \|\vec{x}\| \cdot \|\vec{y}\| \cdot   \frac{1}{4} \cdot
                 \left[ \left\| \frac{\vec{x}}{\|\vec{x}\|}   +  \frac{\vec{y}}{\|\vec{y}\|} \right\|^{2}   - 
                 \left\| \frac{\vec{x}}{\|\vec{x}\|} - \frac{\vec{y}}{\|\vec{y}\|} \right\|^{2}  \right] \cdot
                 \left(  \frac{1}{4} \cdot 
                 \left[  \left\| \frac{\vec{x}}{\|\vec{x}\|}   +  \frac{\vec{y}}{\|\vec{y}\|} \right\|^{2}  + 
                 \left\| \frac{\vec{x}}{\|\vec{x}\|} - \frac{\vec{y}}{\|\vec{y}\|} \right\|^{2}   \right]
                                                        \right) ^{\bf \varrho} 
     \end{equation}               $ {  }  $       \hfill  $\Box$      
   \end{definition}
       For the coming discussions it  is very useful  to introduce some abbreviations. We define for arbitrary vectors 
       \ $ \vec{x}, \vec{y} \neq \vec{0}$ \ (hence  \ $ \|\vec{x}\| \cdot \|\vec{y}\|  \neq  0 $) \ 
       two non-negative real numbers  $ { \bf s}  $  and  $ { \bf d}  $, 
   $$ { \bf s}  \ :=  \  { \bf s}( \vec{x}, \vec{y}) 
       \ :=  \  \left\| \frac{\vec{x}}{\|\vec{x}\|}   +  \frac{\vec{y}}{\|\vec{y}\|} \right\| , \quad \text{and} \quad
       { \bf d} \ := \  { \bf d}( \vec{x}, \vec{y})  
       \ :=  \   \ \left\| \frac{\vec{x}}{\|\vec{x}\|}   -  \frac{\vec{y}}{\|\vec{y}\|} \right\| \ ,      $$
       and also  two real numbers \ $  { \bf \Sigma} $ and  ${ \bf \Delta} $, the latter can be negative, \\
     \centerline {  $  { \bf \Sigma} := { \bf \Sigma}( \vec{x}, \vec{y}) \ := \ { \bf s}^{2} + { \bf d}^{2} \ = \
                        \left\| \frac{\vec{x}}{\|\vec{x}\|}   +  \frac{\vec{y}}{\|\vec{y}\|} \right\|^{2} 
                 +      \left\| \frac{\vec{x}}{\|\vec{x}\|}   -  \frac{\vec{y}}{\|\vec{y}\|} \right\|^{2} $ , } \\
      and \\ 
     \centerline {  $  { \bf \Delta} \ := \ { \bf \Delta}( \vec{x}, \vec{y})  \ := \ { \bf s}^{2} - { \bf d}^{2} \ = \
                 \left\| \frac{\vec{x}}{\|\vec{x}\|}   +  \frac{\vec{y}}{\|\vec{y}\|} \right\|^{2} 
                 -      \left\| \frac{\vec{x}}{\|\vec{x}\|}   -  \frac{\vec{y}}{\|\vec{y}\|} \right\|^{2} $ . }  \\
     All  defined four variables depend on two vectors \ $ \vec{x}, \vec{y} \neq \vec{0} $.  
     Since  \ $( X , \| \cdot \| )$ \ is positive definite,  $  { \bf \Sigma}$ must be a positive number. \ 
     Note  the inequality \  
	   $ 0 \leq  \left| { \bf \Delta}  \right| \leq  {\bf \Sigma} $.   
                                  
  With this abbreviations the above formula of the product  \ $ < \vec{x} \: | \:  \vec{y} >_{\bf \varrho}$  \ 
     is shortened to                
    \begin{equation*}   
       < \vec{x} \: | \:  \vec{y} >_{\bf \varrho}   \ \ =  \ \            
           \begin{cases}     0  \quad  &  \  \mbox{for} \  \vec{x} = \vec{0} \ \text{ or }\  \vec{y} = \vec{0} \  \\   
                      \|\vec{x}\| \cdot \|\vec{y}\| \cdot \frac{1}{4} \cdot  { \bf \Delta} \cdot
                      \left(  \frac{1}{4} \cdot  { \bf \Sigma}   \right)  ^{\bf \varrho} 
                              \quad     & \  \mbox{for} \   \vec{x}, \vec{y} \ \neq  \vec{0}  \ , \\            
           \end{cases}  \qquad \text{for all } \ {\bf \varrho} \in \mathbbm{R}. 
   \end{equation*} 
   \begin{lemma} \label{lemma 1} 
    In the case that \  $ \left( X , \| \cdot \| \ \right) $ is already an ${\mathsf  {IP}}$ 
       space with the inner product 
       $ < . \: | \:  . >_{IP} $, then  our product corresponds to the inner product, i.e. for all 
       \ $ \vec{x}, \vec{y} \in X $  \ we have
     \begin{align}  
             < \vec{x} \: | \:  \vec{y} >_{IP} \ \ = \ \ < \vec{x} \: | \:  \vec{y} >_{\bf \varrho} \qquad
                \text{ for all} \  {\bf \varrho} \in \mathbbm{R}.  
      \end{align}            
   \end{lemma}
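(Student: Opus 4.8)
The plan is to reduce equation (2) to the two cases in the definition of $< . \: | \: . >_{\bf \varrho}$ and to exploit the parallelogram identity $\widehat{(4)}$, which is available precisely because $\left( X , \| \cdot \| \right)$ is an ${\mathsf{IP}}$ space. First I would dispose of the degenerate case: if $\vec{x} = \vec{0}$ or $\vec{y} = \vec{0}$, then $< \vec{x} \: | \: \vec{y} >_{\bf \varrho} = 0$ by definition, while homogeneity $\overline{(1)}$ (applied with $r = 0$) forces $< \vec{x} \: | \: \vec{y} >_{IP} = 0$ as well, so both sides of (2) agree.

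For $\vec{x}, \vec{y} \neq \vec{0}$ I would introduce the unit vectors $\vec{u} := \vec{x}/\|\vec{x}\|$ and $\vec{v} := \vec{y}/\|\vec{y}\|$, so that $\|\vec{u}\| = \|\vec{v}\| = 1$. The crucial observation concerns ${\bf \Sigma}$: applying the parallelogram identity $\widehat{(4)}$ to $\vec{u}$ and $\vec{v}$ gives
$$ {\bf \Sigma} \ = \ \|\vec{u}+\vec{v}\|^{2} + \|\vec{u}-\vec{v}\|^{2} \ = \ 2 \cdot \left( \|\vec{u}\|^{2} + \|\vec{v}\|^{2} \right) \ = \ 4 \, , $$
hence $\tfrac{1}{4} {\bf \Sigma} = 1$ and therefore $\left( \tfrac{1}{4} {\bf \Sigma} \right)^{\bf \varrho} = 1$ for \emph{every} real $\varrho$. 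This is the heart of the matter: in an ${\mathsf{IP}}$ space the entire $\varrho$-dependent factor collapses to $1$, which is exactly why the product becomes independent of $\varrho$.

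It then remains to treat ${\bf \Delta}$. By the standard polarization identity in a real inner product space, $\|\vec{u}+\vec{v}\|^{2} - \|\vec{u}-\vec{v}\|^{2} = 4 \cdot < \vec{u} \: | \: \vec{v} >_{IP}$, so $\tfrac{1}{4} {\bf \Delta} = < \vec{u} \: | \: \vec{v} >_{IP}$, and by bilinearity (from $\overline{(1)}$, $\overline{(2)}$, $\overline{(5)}$) this equals $\tfrac{1}{\|\vec{x}\| \cdot \|\vec{y}\|} < \vec{x} \: | \: \vec{y} >_{IP}$. Substituting both computations into the shortened formula for the product yields
$$ < \vec{x} \: | \: \vec{y} >_{\bf \varrho} \ = \ \|\vec{x}\| \cdot \|\vec{y}\| \cdot \tfrac{1}{4} {\bf \Delta} \cdot \left( \tfrac{1}{4} {\bf \Sigma} \right)^{\bf \varrho} \ = \ \|\vec{x}\| \cdot \|\vec{y}\| \cdot \frac{< \vec{x} \: | \: \vec{y} >_{IP}}{\|\vec{x}\| \cdot \|\vec{y}\|} \cdot 1 \ = \ < \vec{x} \: | \: \vec{y} >_{IP} \, , $$
which is the claimed identity.

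There is essentially no obstacle here; the argument is a direct computation, and the only point deserving care is the polarization step. To keep it rigorous I would either cite the polarization identity valid in any real inner product space, or expand $\|\vec{u} \pm \vec{v}\|^{2} = < \vec{u} \pm \vec{v} \: | \: \vec{u} \pm \vec{v} >_{IP}$ using symmetry $\overline{(2)}$ and linearity $\overline{(5)}$, and subtract. I would emphasize in the write-up that the genuinely new content of the lemma is the identity ${\bf \Sigma} = 4$, since this is what guarantees that all the products $< . \: | \: . >_{\bf \varrho}$, and hence all the $\varrho$-angles, reduce simultaneously to the Euclidean one.
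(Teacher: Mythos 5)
Your proof is correct and follows essentially the same route as the paper's: polarization expresses $\tfrac{1}{4}{\bf \Delta}$ as the inner product of the normalized vectors (pulled back to $<\vec{x}\,|\,\vec{y}>_{IP}$ by homogeneity), and the parallelogram identity forces $\tfrac{1}{4}{\bf \Sigma}=1$, killing the $\varrho$-dependence. The only difference is that you also treat the case $\vec{x}=\vec{0}$ or $\vec{y}=\vec{0}$ explicitly, which the paper leaves tacit.
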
  
   \begin{proof}
         Let the  normed space  \ $ \left( X , \| \cdot \| \ \right) $ \ be an inner product space or 
         `${\mathsf  {IP}}$ space'. 
         For two elements  $ \vec{x}, \vec{y} \in X $  \ we can express its inner product  by its norms, i.e. we have  
      \begin{equation*}  
           < \vec{x} \ | \ \vec{y} >_{IP}  \ \ =  \ 
          \frac{1}{4} \cdot \left( \: \|\vec{x}+\vec{y}\|^{2} -   \|\vec{x}-\vec{y}\|^{2} \: \right) ,
      \end{equation*}  
         and by the homogeneity and the symmetry of the inner product we can write for  \ 
         $ \vec{x}, \vec{y} \neq \vec{0}$ 
      \begin{equation*}  
          < \vec{x} \ | \ \vec{y} >_{IP}  \ =  \ 
          \|\vec{x}\| \cdot \|\vec{y}\| \cdot < \frac{\vec{x}}{\|\vec{x}\|} \ | \ \frac{\vec{y}}{\|\vec{y}}\| >_{IP}  
          \  =  \  \|\vec{x}\| \cdot \|\vec{y}\| \cdot  \frac{1}{4} \cdot 
          \left[  \ \left\| \frac{\vec{x}}{\|\vec{x}\|}   +  \frac{\vec{y}}{\|\vec{y}\|} \right\|^{2} \  - \
          \left\| \frac{\vec{x}}{\|\vec{x}\|}   -   \frac{\vec{y}}{\|\vec{y}\|} \right\|^{2} \  \right] \ .     
       \end{equation*}  
       This shows the equation \
       $ < \vec{x} \: | \:  \vec{y} >_{IP} \ = \ \|\vec{x}\| \cdot \|\vec{y}\| \cdot \frac{1}{4} \cdot {\bf \Delta}$. \        Further, in inner product spaces the parallelogram identity holds, that means  for unit vectors 
       $ \vec{v} \text { and } \vec{w} $ \ we have   
       $$  \|\vec{v}+\vec{w}\|^{2} +   \|\vec{v}-\vec{w}\|^{2} \ = \ 
                            2 \cdot \left( \|\vec{v}\|^{2} + \|\vec{w}\|^{2} \right) \ =  \  4 \ . $$ 
       Then it follows for the unit vectors \ $  \frac{\vec{x}}{\|\vec{x}\|} $ and  $ \frac{\vec{y}}{\|\vec{y}\|}$ \
   $$  \frac{1}{4} \cdot \left( \: \left\| \frac{\vec{x}}{\|\vec{x}\|} + \frac{\vec{y}}{\|\vec{y}\|} \right\|^{2} 
                 +  \left\| \frac{\vec{x}}{\|\vec{x}\|} -  \frac{\vec{y}}{\|\vec{y}\|} \right\|^{2} \right) 
        \ = \  \frac{1}{4} \cdot ( { \bf s}^{2} + { \bf d}^{2} ) \ = \  \frac{1}{4} \cdot  { \bf \Sigma} \ = \ 1 \ , $$
        and the lemma is proven.       
    \end{proof}    
    \begin{lemma} \label{lemma 2} 
          \it  For a positive definite $ \mathsf{BW}$ space $ ( X , \| \cdot \| \   ) $  
          the pair ( $ X , < . \: | \: . >_{\bf \varrho}  $ )  is a homogeneous product vector space,  with 
          $ \|\vec{x}\| =   \sqrt{ <\vec{x} \: | \: \vec{x}>_{\bf \varrho}  } $,  \ for all \ $ \vec{x} \in X $ 
          and for all  real numbers ${\bf \varrho}$.  
     \end{lemma}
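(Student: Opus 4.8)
The plan is to verify the three defining conditions $\overline{(1)}$, $\overline{(2)}$, $\overline{(3)}$ of a homogeneous product and then to read off the identity $\|\vec{x}\| = \sqrt{<\vec{x}\,|\,\vec{x}>_{\bf \varrho}}$ as a by-product. Continuity of the product is already part of Definition \ref{naechste Definition}; on $(X\backslash\{\vec{0}\})^{2}$ the expression is visibly a composition of continuous maps, where positive definiteness guarantees ${\bf \Sigma} > 0$ so that $\left(\tfrac14 {\bf \Sigma}\right)^{\bf \varrho}$ is defined for every real ${\bf \varrho}$.

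I would first settle the symmetry $\overline{(2)}$, which is immediate: the prefactor $\|\vec{x}\|\cdot\|\vec{y}\|$ is symmetric, and both ${\bf \Sigma}$ and ${\bf \Delta}$ are assembled from ${\bf s}$ and ${\bf d}$, each of which is unchanged when $\vec{x}$ and $\vec{y}$ are swapped because $\|-\vec{z}\| = \|\vec{z}\|$ by the absolute homogeneity $\widehat{(1)}$. Next I would evaluate the product on the diagonal $\vec{y} = \vec{x}$: the two unit vectors then coincide, so ${\bf s} = \left\|\,2\cdot\frac{\vec{x}}{\|\vec{x}\|}\,\right\| = 2$ and ${\bf d} = 0$, whence ${\bf \Delta} = {\bf \Sigma} = 4$. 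Substituting gives $<\vec{x}\,|\,\vec{x}>_{\bf \varrho} = \|\vec{x}\|^{2}\cdot\tfrac14\cdot 4\cdot 1^{\bf \varrho} = \|\vec{x}\|^{2}$ for every ${\bf \varrho}$ (and the value is $0$ when $\vec{x} = \vec{0}$). This one line delivers simultaneously the positive semidefiniteness $\overline{(3)}$ and the asserted norm identity.

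The remaining condition is homogeneity $\overline{(1)}$, which I would prove by cases. If $r = 0$ or one of the vectors is $\vec{0}$, both sides are $0$ by definition. So assume $r \neq 0$ and $\vec{x},\vec{y}\neq\vec{0}$, and use the key identity $\frac{r\vec{x}}{\|r\vec{x}\|} = \frac{r}{|r|}\cdot\frac{\vec{x}}{\|\vec{x}\|} = \pm\frac{\vec{x}}{\|\vec{x}\|}$. For $r>0$ the unit vector is unchanged, hence ${\bf \Delta}$ and ${\bf \Sigma}$ are those of $(\vec{x},\vec{y})$ while $\|r\vec{x}\| = r\|\vec{x}\|$, producing exactly the factor $r$. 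For $r<0$ the unit vector becomes $-\frac{\vec{x}}{\|\vec{x}\|}$; replacing the first unit vector by its negative interchanges ${\bf s}$ and ${\bf d}$ (once more via $\|-\vec{z}\| = \|\vec{z}\|$), thereby negating ${\bf \Delta}$ and fixing ${\bf \Sigma}$, while $\|r\vec{x}\| = -r\|\vec{x}\|$. The two sign reversals combine to $(-r)\cdot(-1) = r$, so $<r\vec{x}\,|\,\vec{y}>_{\bf \varrho} = r\cdot<\vec{x}\,|\,\vec{y}>_{\bf \varrho}$ again.

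The only step demanding a little care is the bookkeeping of the two sign reversals in the $r<0$ case; the rest consists of direct substitutions. I would emphasise that only $\overline{(1)}$--$\overline{(3)}$ are claimed, so neither linearity $\overline{(5)}$ nor definiteness $\overline{(4)}$ of $<.\,|\,.>_{\bf \varrho}$ enters, as befits a general $\mathsf{BW}$ space that is typically far from being an inner product space.
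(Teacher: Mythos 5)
Your proposal is correct and follows essentially the same route as the paper: symmetry and positive semidefiniteness (via the diagonal computation giving $<\vec{x}\,|\,\vec{x}>_{\bf \varrho}=\|\vec{x}\|^{2}$) are dispatched as near-immediate, and homogeneity is reduced to the observation that a sign change in one argument swaps ${\bf s}$ and ${\bf d}$, hence negates ${\bf \Delta}$ while fixing ${\bf \Sigma}$, with the positive-scalar case handled by absolute homogeneity $\widehat{(1)}$. Your handling of general $r<0$ in one step is merely a compressed form of the paper's two-step argument ($r>0$ plus the case $r=-1$), so the proofs coincide in substance.
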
  
     \begin{proof}  \quad                                   
             We have  \ $ < . \, | \, . >_{\bf \varrho} \, : \,  X^{2} \longrightarrow    \mathbbm{R}  $,  \ 
             and the properties  $\overline{(2)}$ (symmetry) \ and $\overline{(3)}$ (positive semidefiniteness) are
             rather trivial.  Clearly,  $ \|\vec{x}\| = \sqrt{ <\vec{x} \: | \: \vec{x}>_{\bf \varrho} } \ $ for all 
             $ \vec{x} \in X $.    We  show \   $\overline{(1)}$,  the homogeneity. 
             For a  real number  \ $ r > 0 $ \ \ holds  that  \ \ 
             $ <r \cdot \vec{x} \ | \ \vec{y}>_{\bf \varrho}  \ = \ r \cdot <\vec{x} \ | \ \vec{y}>_{\bf \varrho} $, 
             because \  $ ( X , \| \cdot \| \   ) $  \  satisfies \ $\widehat{(1)}$ .    \ \
             Now we prove \ \ $ <- \vec{x} \ | \ \vec{y}>_{\bf \varrho} \ = \ - <\vec{x} \ | \ \vec{y}>_{\bf \varrho}$.
             \   Let \ $ \vec{x}, \vec{y} \neq \vec{0}$ .  \  Note that the factor $ { \bf \Sigma} $
             is not affected by a negative sign at $ \vec{x}$ or $ \vec{y}$ . \    We  have    
        \begin{eqnarray*}            
               - <\vec{x} \ | \ \vec{y}>_{\bf \varrho}     & = &   
               - \  \frac{1}{4} \cdot  \|\vec{x}\| \cdot \|\vec{y}\| \cdot 
               \left[  \  \left\| \frac{\vec{x}}{\|\vec{x}\|} + \frac{\vec{y}}{\|\vec{y}\|}  \right\|^{2} \ - \
               \left\| \frac{\vec{x}}{\|\vec{x}\|} -  \frac{\vec{y}}{\|\vec{y}\|}  \right\|^{2} \  \right]  
               \cdot  \left(    \frac{1}{4} \cdot  { \bf \Sigma}  \right)  ^{\bf \varrho} \, , \ {\rm and} \\ 
               <- \vec{x} \ | \ \vec{y}>_{\bf \varrho}  & = &  
               \frac{1}{4} \cdot  \| -\vec{x}\| \cdot \|\vec{y}\| \cdot 
               \left[  \ \left\| \frac{ -\vec{x}}{\| -\vec{x}\|} + \frac{\vec{y}}{\|\vec{y}\|} \right\|^{2} \  - \
               \left\| \frac{ -\vec{x}}{\| -\vec{x}\|} -  \frac{\vec{y}}{\|\vec{y}\|} \right\|^{2} \  \right] 
               \cdot  \left(    \frac{1}{4} \cdot  { \bf \Sigma}  \right) ^{\bf \varrho}   \\ 
               & = &    \frac{1}{4} \cdot  \|\vec{x}\| \cdot \|\vec{y}\| \cdot 
               \left[  \ \left\| \frac{\vec{y}}{\| \vec{y}\|} - \frac{\vec{x}}{\|\vec{x}\|} \right\|^{2} \  - \
               \left\| \frac{ \vec{x}}{\| \vec{x}\|} +  \frac{\vec{y}}{\|\vec{y}\|} \right\|^{2} \  \right] 
               \cdot  \left(    \frac{1}{4} \cdot  { \bf \Sigma}  \right) ^{\bf \varrho} \ ,    
        \end{eqnarray*}                
             hence \  $ <- \vec{x} \ | \ \vec{y}>_{\bf \varrho} \ = \ - <\vec{x} \ | \ \vec{y}>_{\bf \varrho} $.  
             Then easily follows also for every  real number $ r < 0 $ \ that \  
             $ <r \cdot \vec{x} \ | \ \vec{y}>_{\bf \varrho}  \ = \ r \cdot <\vec{x} \ | \ \vec{y}>_{\bf \varrho} $,
             \ and  the homogeneity $\overline{(1)}$ \  is proven.   
   \end{proof} 
   \begin{definition}    \rm   \label{Definition drei}  Let ${\bf \varrho}$ be a real number.  
	  For  positive definite \ $ \mathsf{BW}$ spaces $ (X , \| \cdot \|)$ \ for two elements \
	   $\vec{x}, \vec{y} \in X \backslash \{\vec{0}\}$ \   with the additional property \ 
	       $ |< \vec{x} \: | \: \vec{y} >_{\bf \varrho} | \ \leq \ \|\vec{x}\| \cdot \|\vec{y}\| $, or equivalently, 
	       $ \left| \frac{1}{4} \cdot  { \bf \Delta}  \right| \cdot   \left(    \frac{1}{4} \cdot 
	        { \bf \Sigma}   \right) ^{\bf \varrho}  \leq 1 $,  \ 
	       we define the `${\bf \varrho}$-angle' $\angle_{\bf \varrho} (\vec{x}, \vec{y})$. \  Let
	 \begin{eqnarray*}      
	         &   \angle_{\bf \varrho} (\vec{x}, \vec{y}) \ := \
	        \arccos{\frac{< \vec{x} \: | \: \vec{y} >_{\bf \varrho}}{ \|\vec{x}\| \cdot \|\vec{y}\| } }  
	        \ =  \    \arccos \left( \frac{1}{4} \cdot  { \bf \Delta} \cdot
               \left( \frac{1}{4} \cdot  { \bf \Sigma}  \right) ^{\bf \varrho} \ \right)   \ =  \  \\
	        &  \arccos{ \left( \frac{1}{4} \cdot   
          \left[  \ \left\| \frac{\vec{x}}{\|\vec{x}\|}  +  \frac{\vec{y}}{\|\vec{y}\|} \right\|^{2} \  - \
          \left\| \frac{\vec{x}}{\|\vec{x}\|}   -   \frac{\vec{y}}{\|\vec{y}\|} \right\|^{2} \  \right] 
          \cdot  \left\langle   \frac{1}{4} \cdot 
                 \left[  \left\| \frac{\vec{x}}{\|\vec{x}\|}   +  \frac{\vec{y}}{\|\vec{y}\|} \right\|^{2}  + 
                 \left\| \frac{\vec{x}}{\|\vec{x}\|} - \frac{\vec{y}}{\|\vec{y}\|} \right\|^{2}   \right]
                                                       \right\rangle  ^{\bf \varrho}  \  \right) }   \ . 
   \end{eqnarray*}               
   \end{definition} 
   \begin{proposition}  \label{proposition eins}  
        Let \  $ \left(  X , < . \: | \: . >_{IP} \right)$  \ be an   ${\mathsf  {IP}}$ space with the inner product
        \ $ < . \: | \: . >_{IP} $ \ and the generated norm  $  \| \cdot \|  $. \
        Then the triple \ $ \left( X , \| \cdot \| ,  < . \: | \: . >_{\varrho}  \right) $ \ 
        fulfils the  { \sf CSB } inequality,  and we have 
        that the   well-known Euclidean angle corresponds to the $\varrho$-angle, i.e. for all \
        $\vec{x},\vec{y} \neq \vec{0}$    \ it holds  \  $ \angle_{\varrho} (\vec{x}, \vec{y}) = 
        \angle_{Euclid} (\vec{x}, \vec{y})$, \   for every real number $\varrho$.
   \end{proposition}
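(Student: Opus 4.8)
The plan is to deduce this proposition almost directly from Lemma \ref{lemma 1} together with the classical Cauchy-Schwarz inequality valid in any inner product space. The key point already established in Lemma \ref{lemma 1} is that, in an $\mathsf{IP}$ space, for the unit vectors $\frac{\vec{x}}{\|\vec{x}\|}$ and $\frac{\vec{y}}{\|\vec{y}\|}$ the parallelogram identity forces $\frac{1}{4} \cdot {\bf \Sigma} = 1$, so that the factor $\left( \frac{1}{4} \cdot {\bf \Sigma} \right)^{\varrho} = 1^{\varrho} = 1$ collapses for every real number $\varrho$. Consequently $< \vec{x} \: | \: \vec{y} >_{\varrho} \ = \ < \vec{x} \: | \: \vec{y} >_{IP}$ for all $\vec{x}, \vec{y} \in X$ and all $\varrho \in \mathbbm{R}$.

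First I would invoke this identity to reduce the $\varrho$-product to the inner product. Then, since $\| \cdot \|$ is the norm generated by $< . \: | \: . >_{IP}$, the standard Cauchy-Schwarz-Bunjakowsky inequality gives $|< \vec{x} \: | \: \vec{y} >_{IP}| \leq \|\vec{x}\| \cdot \|\vec{y}\|$ for all $\vec{x}, \vec{y} \in X$. Combining the two yields $|< \vec{x} \: | \: \vec{y} >_{\varrho}| = |< \vec{x} \: | \: \vec{y} >_{IP}| \leq \|\vec{x}\| \cdot \|\vec{y}\|$, which is precisely the {\sf CSB} inequality for the triple $\left( X, \| \cdot \|, < . \: | \: . >_{\varrho} \right)$, established uniformly in $\varrho$.

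Finally, the angle statement follows by substituting into the definition of the $\varrho$-angle in Definition \ref{Definition drei}:
$$ \angle_{\varrho}(\vec{x}, \vec{y}) \ = \ \arccos{\frac{< \vec{x} \: | \: \vec{y} >_{\varrho}}{\|\vec{x}\| \cdot \|\vec{y}\|}} \ = \ \arccos{\frac{< \vec{x} \: | \: \vec{y} >_{IP}}{\|\vec{x}\| \cdot \|\vec{y}\|}} \ = \ \angle_{Euclid}(\vec{x}, \vec{y}), $$
valid for all $\vec{x}, \vec{y} \neq \vec{0}$ and every $\varrho$, where the last equality is just the definition of the Euclidean angle recalled in the introduction.

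Since all the substantive work was carried out in Lemma \ref{lemma 1}, there is no genuine obstacle here; the proposition is essentially a corollary. The only point demanding a trace of care is that the argument must hold for \emph{every} real $\varrho$ simultaneously, which is guaranteed precisely because the parallelogram identity makes the $\varrho$-dependent factor identically $1$, so that no case distinction on the sign or magnitude of $\varrho$ is ever needed.
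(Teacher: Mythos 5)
Your proposal is correct and follows exactly the paper's own route: the paper's proof is a one-line citation of Lemma \ref{lemma 1} (the identity $< . \: | \: . >_{IP} \ = \ < . \: | \: . >_{\varrho}$ for all $\varrho$), with the classical Cauchy--Schwarz inequality and the substitution into Definition \ref{Definition drei} left implicit. You have simply spelled out those implicit steps, which is a faithful (and slightly more complete) rendering of the same argument.
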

   \begin{proof}
        In Lemma  \ref{lemma 1} it was shown that \ $  < . \: | \: . >_{IP} \ = \ < . \: | \: . >_{\varrho} $, \ 
        for all real numbers $\varrho$. 
   \end{proof}
   \begin{lemma}   \label{lemma  wwwe}
       For positive definite \ $ \mathsf{BW}$ spaces $ (X , \| \cdot \|)$  for any element 
       $\vec{x} \in X , \, \vec{x} \neq \vec{0} $ (i.e. $ \|\vec{x}\| > 0$) the `angles' 
        $\angle_{\bf \varrho} (\vec{x}, \vec{x })$ and $\angle_{\bf \varrho} (\vec{x}, -\vec{x})$ always exist, with  
        $\angle_{\bf \varrho} (\vec{x}, \vec{x }) = 0$ and $\angle_{\bf \varrho} (\vec{x}, -\vec{x}) = \pi$. 
        That means  $({\tt An} \; 2)$ and  $({\tt An} \; 3)$ from Definition \ref{Definition eins}  is
        fulfilled,  for every number  ${\bf \varrho} \in  \mathbbm{R} $.
   \end{lemma}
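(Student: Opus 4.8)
The plan is to verify the two cases $\vec{y}=\vec{x}$ and $\vec{y}=-\vec{x}$ by direct computation of the four quantities ${\bf s},{\bf d},{\bf \Sigma},{\bf \Delta}$, using only absolute homogeneity $\widehat{(1)}$ together with positive definiteness $\widehat{(2)}$ (the latter guarantees $\|\vec{x}\|>0$, so that the unit vector $\frac{\vec{x}}{\|\vec{x}\|}$ is well defined). First I would treat $\vec{y}=\vec{x}$. Here the two unit vectors coincide, $\frac{\vec{x}}{\|\vec{x}\|}=\frac{\vec{y}}{\|\vec{y}\|}$, so that ${\bf s}=\left\|2\frac{\vec{x}}{\|\vec{x}\|}\right\|=2$ and ${\bf d}=\|\vec{0}\|=0$; therefore ${\bf \Sigma}={\bf s}^{2}+{\bf d}^{2}=4$ and ${\bf \Delta}={\bf s}^{2}-{\bf d}^{2}=4$.

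For the other case $\vec{y}=-\vec{x}$, absolute homogeneity gives $\|-\vec{x}\|=\|\vec{x}\|$, hence $\frac{\vec{y}}{\|\vec{y}\|}=-\frac{\vec{x}}{\|\vec{x}\|}$. This yields ${\bf s}=\|\vec{0}\|=0$ and ${\bf d}=\left\|2\frac{\vec{x}}{\|\vec{x}\|}\right\|=2$, so that ${\bf \Sigma}=4$ and ${\bf \Delta}=-4$.

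The crucial observation, really the only point that needs care, is that in \emph{both} cases ${\bf \Sigma}=4$, so that $\left(\frac{1}{4}{\bf \Sigma}\right)^{\bf \varrho}=1^{\bf \varrho}=1$ for every real number ${\bf \varrho}$. Consequently the argument of the arccosine in Definition \ref{Definition drei} is just $\frac{1}{4}{\bf \Delta}\cdot 1$, which equals $+1$ when $\vec{y}=\vec{x}$ and $-1$ when $\vec{y}=-\vec{x}$. In particular its absolute value is exactly $1\le 1$, so the defining inequality $\left|\frac{1}{4}{\bf \Delta}\right|\cdot\left(\frac{1}{4}{\bf \Sigma}\right)^{\bf \varrho}\le 1$ holds (with equality), the ${\bf \varrho}$-angle is defined, and we conclude $\angle_{\bf \varrho}(\vec{x},\vec{x})=\arccos(1)=0$ and $\angle_{\bf \varrho}(\vec{x},-\vec{x})=\arccos(-1)=\pi$, which are precisely $({\tt An}\;2)$ and $({\tt An}\;3)$. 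I do not expect any real obstacle: the entire content is that the ${\bf \varrho}$-dependent factor $\left(\frac{1}{4}{\bf \Sigma}\right)^{\bf \varrho}$ degenerates to $1$ exactly because ${\bf \Sigma}=4$ for collinear unit vectors, and this is what makes these two particular angles exist uniformly in ${\bf \varrho}$, independently of whether the full {\sf CSB} inequality holds in $(X,\|\cdot\|)$.
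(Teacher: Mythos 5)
Your proof is correct and is exactly the argument the paper has in mind: the paper's own proof is the one-liner ``Trivial if we use that $\| \cdot \|$ is balanced and $\|\vec{0}\| = 0$,'' and your explicit computation (both cases give ${\bf \Sigma}=4$, so the factor $\left(\frac{1}{4}{\bf \Sigma}\right)^{\bf \varrho}$ collapses to $1$ and the arccosine argument is $\pm 1$) is just the spelled-out version of that remark. No gap; you have simply supplied the details the author omitted.
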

   \begin{proof}
       Trivial if we use that  $\| \cdot \|$ is balanced and \ $\|\vec{0}\| = 0$. 
   \end{proof}
       Now the reader should take a short look on ({\tt An} 4) - ({\tt An} 7) from Definition \ref{Definition eins}
       to prepare the following proposition.
   \begin{proposition}  \label{proposition fvafav}
       Assume a  positive definite  $ \mathsf{BW}$ space $ (X , \| \cdot \|)$ and  two fixed vectors 
       $\vec{x}, \vec{y} \in X$,  such that the  ${\bf \varrho}$-angle
       $ \angle_{\bf \varrho} (\vec{x}, \vec{y})$ is defined,  for a fixed number $\varrho$. 
       Then the following ${\bf \varrho}$-angles are also defined, and  we have 
     \begin{itemize}
        \item  (a) \  $\angle_{\bf \varrho} (\vec{x}, \vec{y}) = \angle_{\bf \varrho} (\vec{y}, \vec{x}) $. 
        \item  (b) \  $\angle_{\bf \varrho} (r \cdot \vec{x}, s \cdot \vec{y}) = \angle_{\bf \varrho}
          (\vec{x},\vec{y})$, for all positive real numbers  $r,s$. 
        \item  (c) \  $\angle_{\bf \varrho} ( - \vec{x}, - \vec{y}) = \angle_{\bf \varrho} (\vec{x},\vec{y})$.          
        \item  (d) \  $ \angle_{\bf \varrho}(\vec{x}, \vec{y}) + \angle_{\bf \varrho}(-\vec{x}, \vec{y})  = \pi $. 
      \end{itemize}    
   \end{proposition}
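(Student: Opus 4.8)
The plan is to reduce all four assertions to the behaviour of the four scalars ${\bf s}, {\bf d}, {\bf \Sigma}, {\bf \Delta}$ under the operations involved, since by Definition \ref{Definition drei} the $\varrho$-angle depends on the pair $(\vec{x}, \vec{y})$ only through the number $\frac{1}{4} {\bf \Delta} \cdot \left( \frac{1}{4} {\bf \Sigma} \right)^{\varrho}$. The single fact doing all the work is the absolute homogeneity $\widehat{(1)}$, which yields $\| -\vec{v} \| = \|\vec{v}\|$ for every $\vec{v}$ and, for $r > 0$, the normalization identity $\frac{r \cdot \vec{x}}{\|r \cdot \vec{x}\|} = \frac{\vec{x}}{\|\vec{x}\|}$. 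A secondary point to be checked in each case is that the new angle is again \emph{defined}: since the defining inequality is $\left| \frac{1}{4} {\bf \Delta} \right| \cdot \left( \frac{1}{4} {\bf \Sigma} \right)^{\varrho} \leq 1$, it suffices to verify that $|{\bf \Delta}|$ and ${\bf \Sigma}$ are preserved.

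First I would record how the normalized vectors transform. For (a), interchanging $\vec{x}$ and $\vec{y}$ leaves both ${\bf s}$ and ${\bf d}$ unchanged, because $\|\vec{a}+\vec{b}\| = \|\vec{b}+\vec{a}\|$ and, using $\widehat{(1)}$ with $r = -1$, $\|\vec{a}-\vec{b}\| = \|\vec{b}-\vec{a}\|$. For (b), positive scaling does not move the normalized vectors at all, so ${\bf s}, {\bf d}, {\bf \Sigma}, {\bf \Delta}$ are literally the same numbers. For (c), replacing $(\vec{x},\vec{y})$ by $(-\vec{x}, -\vec{y})$ sends each normalized vector to its negative; then $\|-\frac{\vec{x}}{\|\vec{x}\|} - \frac{\vec{y}}{\|\vec{y}\|}\| = {\bf s}$ and $\|-\frac{\vec{x}}{\|\vec{x}\|} + \frac{\vec{y}}{\|\vec{y}\|}\| = {\bf d}$ by $\widehat{(1)}$, so again ${\bf s}, {\bf d}$ are unchanged. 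In all three cases ${\bf \Sigma}$ and ${\bf \Delta}$ are preserved, hence the argument of $\arccos$ is identical; this simultaneously shows the transformed angle is defined and equal to the original, giving (a), (b), (c) at once.

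The only case with genuine content is (d). Here I would replace $\vec{x}$ by $-\vec{x}$ and observe that this \emph{swaps} ${\bf s}$ and ${\bf d}$: the new ${\bf s}$ equals $\|-\frac{\vec{x}}{\|\vec{x}\|}+\frac{\vec{y}}{\|\vec{y}\|}\| = {\bf d}$, and symmetrically the new ${\bf d}$ equals ${\bf s}$. Consequently ${\bf \Sigma} = {\bf s}^2 + {\bf d}^2$ is preserved while ${\bf \Delta} = {\bf s}^2 - {\bf d}^2$ changes sign. In particular $|{\bf \Delta}|$ is unchanged, so $\angle_{\varrho}(-\vec{x}, \vec{y})$ is again defined; moreover this is exactly the homogeneity $< -\vec{x} \: | \: \vec{y} >_{\varrho} = - < \vec{x} \: | \: \vec{y} >_{\varrho}$ already established in Lemma \ref{lemma 2}. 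Dividing by $\|-\vec{x}\| \cdot \|\vec{y}\| = \|\vec{x}\| \cdot \|\vec{y}\|$ gives $\cos \angle_{\varrho}(-\vec{x}, \vec{y}) = - \cos \angle_{\varrho}(\vec{x},\vec{y})$, and applying the elementary identity $\arccos(-t) = \pi - \arccos(t)$ yields $\angle_{\varrho}(-\vec{x}, \vec{y}) = \pi - \angle_{\varrho}(\vec{x}, \vec{y})$, which is (d).

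I expect no real obstacle: everything rests on the sign-insensitivity of the balanced weight. The one point demanding a little care is the bookkeeping in (d), namely confirming that the roles of ${\bf s}$ and ${\bf d}$ are interchanged rather than merely relabelled, together with the correct invocation of $\arccos(-t) = \pi - \arccos(t)$ on $[-1,1]$, which is precisely where the range $[0,\pi]$ of the angle comes into play.
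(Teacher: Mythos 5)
Your proposal is correct and follows essentially the same route as the paper: the paper's proof cites Lemma \ref{lemma 2} (symmetry and homogeneity of $< . \: | \: . >_{\bf \varrho}$) together with the identity $\arccos(v)+\arccos(-v)=\pi$, and your bookkeeping of ${\bf s}$, ${\bf d}$, ${\bf \Sigma}$, ${\bf \Delta}$ under the four transformations is exactly the computation underlying that lemma, unfolded rather than quoted. Your version is in fact slightly more careful than the paper's on one point: you explicitly verify that $|{\bf \Delta}|$ and ${\bf \Sigma}$ are preserved, which is what guarantees the transformed angles are \emph{defined} — a part of the statement the paper's proof leaves implicit.
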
 
   \begin{proof}
    Easy. We defined $\angle_{\bf \varrho} (\vec{x}, \vec{y}) \ = \
	                  \arccos{\frac{< \vec{x} \: | \: \vec{y} >_{\bf \varrho}}{ \|\vec{x}\| \cdot \|\vec{y}\| } } $,
	        and in Lemma \ref{lemma 2} we proved that the space $ (X, < . \: | \: . >_{\bf \varrho})$ 
	        is a homogeneous product vector  space.  Then (a) is true since  $< . \: | \: . >_{\bf \varrho}$ 
	        is symmetrical.    We have (b) and (c) because  the product is homogeneous, i.e. 
	        $ <  r \cdot \vec{x} \: | \: s \cdot  \vec{y} >_{\bf \varrho}
	        = r \cdot s \, \cdot < \vec{x} \: | \: \vec{y} >_{\bf \varrho} $, for all \ $ r,s \in \mathbbm{R}$ as well as
	        $ \| r \cdot \vec{y}\| = |r| \cdot \|\vec{y}\|$, for real numbers $r$. 
	        And (d) follows because \  $ \arccos{(v)} + \arccos{(-v)} = \pi$, for all $ v $ from the interval $[-1,1]$.       \end{proof}
   We consider three special cases, let us take  $ {\bf \varrho} $ from the set $\{ 1, 0, -1 \}$.  
        For vectors  $\vec{x}, \vec{y} \neq \vec{0}$ of a  positive definite $ \mathsf{BW}$ space $ (X , \| \cdot \|)$ \
           let us assume \     
    $ |< \vec{x} \: | \: \vec{y} >_{\bf 1} | \ \leq \ \|\vec{x}\| \cdot \|\vec{y}\| $  \ or \ 
    $ |< \vec{x} \: | \: \vec{y} >_{\bf 0} | \ \leq \ \|\vec{x}\| \cdot \|\vec{y}\| $  \ or \
    $ |< \vec{x} \: | \: \vec{y} >_{\bf -1} | \ \leq \ \|\vec{x}\| \cdot \|\vec{y}\| $, \ respectively. \ 
     In Definition  \ref{Definition drei} we defined the `${\bf \varrho}$-angle' $\angle_{\bf \varrho}$, 
        including the cases \ $ \angle_{\bf 1} , \,  \angle_{\bf 0}, \,  \angle_{\bf -1}$. \ We have  
  \begin{align*}      
       \angle_{\bf \varrho = 1} (\vec{x}, \vec{y}) \ &
        = \ \arccos{\left( \frac{1}{16} \cdot   
       \left[  \ \left\| \frac{\vec{x}}{\|\vec{x}\|}  +  \frac{\vec{y}}{\|\vec{y}\|} \right\|^{4} \  - \
       \left\| \frac{\vec{x}}{\|\vec{x}\|}   -   \frac{\vec{y}}{\|\vec{y}\|} \right\|^{4} \  \right]  \right)}  
       \  = \  \arccos{\left( \frac{1}{16} \cdot \ \left( { \bf s}^{4} - { \bf d}^{4} \right) \ \right)} ,  \\
       \angle_{\bf \varrho = 0} (\vec{x}, \vec{y}) \ &
        = \ \arccos{\left( \frac{1}{4} \cdot   
       \left[  \ \left\| \frac{\vec{x}}{\|\vec{x}\|}  +  \frac{\vec{y}}{\|\vec{y}\|} \right\|^{2} \  - \
       \left\| \frac{\vec{x}}{\|\vec{x}\|}   -   \frac{\vec{y}}{\|\vec{y}\|} \right\|^{2} \  \right]  \right)}  
       \  =  \ \arccos{\left( \frac{1}{4} \cdot   { \bf \Delta}   \right)}      \ ,                            \\
       \angle_{\bf \varrho = -1} (\vec{x}, \vec{y}) \ &
        = \  \arccos{ \left(   
      \frac{ \left\| \frac{\vec{x}}{\|\vec{x}\|}  +  \frac{\vec{y}}{\|\vec{y}\|} \right\|^{2} \ - \
       \left\| \frac{\vec{x}}{\|\vec{x}\|}   -   \frac{\vec{y}}{\|\vec{y}\|} \right\|^{2} }
           { \left\| \frac{\vec{x}}{\|\vec{x}\|}  +  \frac{\vec{y}}{\|\vec{y}\|} \right\|^{2} \ + \
       \left\| \frac{\vec{x}}{\|\vec{x}\|}   -   \frac{\vec{y}}{\|\vec{y}\|} \right\|^{2} }  \right)} 
       \ = \     \arccos{ \left( \frac{{\bf s}^{2} - {\bf d}^{2}} {{\bf s}^{2} + {\bf d}^{2}}  \right)} 
       \  = \  \arccos \left( \frac{{\bf \Delta}}{{\bf \Sigma}} \right)  \ . 
  \end{align*}  
 	\begin{remark}   \rm 
	      The angle $ \angle_{\bf 0} $ reflects the fact that the cosine of an inner angle in a rhombus with the side 
	      lenght $1$  can be expressed  as the fourth part of the difference of the  squares of the two diagonals, while
	      $ \angle_{\bf -1} $ means   that the cosine of an inner angle in a rhombus with the side 
	      lenght $1$ is the difference of the  squares of the two diagonals divided by its sum. 
	 \end{remark}     
  \begin{proposition}  \qquad   \it    \label{proposition1} 
          For this proposition let  $ (X , \| \cdot \|)$ be a real  normed vector space, and we consider vectors \  
          $ \vec{x}, \vec{y} \in X\backslash\{\vec{0}\}$. \ Let us take $ {\bf \varrho}$ from the set \
          $ \{ 1, 0, -1 \}$. We have \\
	  (a)   The  triple \ $( X , \| \cdot \| , < . \: | \:  . >_{\bf \varrho} ) $ \  \  fulfils the  
	        { \sf CSB } inequality, hence the  \ `$\varrho$- angle' \  $ \angle_{\bf \varrho} (\vec{x}, \vec{y}) $ \
	        is defined for all  \  $ \vec{x}, \vec{y} \neq \vec{0}$. \ It  means that \ the space  $( X , \| \cdot \| )$
	        has the angle $ \angle_{ \bf \varrho } $ \textsl{}, \ for ${\bf \varrho} \in \{ 1, 0, -1 \}$.        \\ 
	  (b)   The  triple \  $(X , \| \cdot \|,\angle_{\bf \varrho})$ \ fulfils  all seven demands 
	        ({\tt An} 1) - ({\tt An} 7). \  Hence \ $ \left( X , \| \cdot \|,\angle_{\bf \varrho} \right) $ is an
	        angle space \ as it was  defined in    { \rm Definition \ref{Definition eins}}. \\ 
	  (c)   The  triple \ $(X , \| \cdot \|,\angle_{\bf \varrho})$ \ generally does not  fulfil \ 
	        ({\tt An} 8), ({\tt An} 9), ({\tt An} 10). \\
	  (d)   In the special case of \  $ {\bf \varrho} = 0 $ \  the  triple \ 
	        $(X , \| \cdot \|,\angle_{\bf 0 })$ \  fulfils \  ({\tt An} 11).       
   \end{proposition}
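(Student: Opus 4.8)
The plan is to prove the four parts in turn. Throughout, recall from Definition~\ref{Definition drei} that the {\sf CSB} inequality for $\angle_{\bf \varrho}$ is equivalent to $\left|\frac14 {\bf \Delta}\right|\cdot\left(\frac14{\bf \Sigma}\right)^{\bf \varrho}\le 1$ for all $\vec{x},\vec{y}\neq\vec{0}$. For (a) the decisive point is that in a normed space the triangle inequality gives ${\bf s}=\left\|\frac{\vec{x}}{\|\vec{x}\|}+\frac{\vec{y}}{\|\vec{y}\|}\right\|\le 2$ and likewise ${\bf d}\le 2$, so $0\le{\bf s}^2,{\bf d}^2\le 4$. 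I then check the three cases directly. For ${\bf \varrho}=-1$ the condition reads $|{\bf \Delta}|\le{\bf \Sigma}$, i.e. $|{\bf s}^2-{\bf d}^2|\le{\bf s}^2+{\bf d}^2$, which holds for any two non-negative numbers (and so needs no triangle inequality). For ${\bf \varrho}=0$ it reads $|{\bf \Delta}|=|{\bf s}^2-{\bf d}^2|\le 4$, immediate from $0\le{\bf s}^2,{\bf d}^2\le 4$. For ${\bf \varrho}=1$ it reads $|{\bf \Delta}|\cdot{\bf \Sigma}=|{\bf s}^4-{\bf d}^4|\le 16$, immediate from $0\le{\bf s}^4,{\bf d}^4\le 16$. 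Hence the {\sf CSB} inequality holds and $(X,\|\cdot\|)$ has the angle $\angle_{\bf \varrho}$.

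Part (b) is then assembled from results already at hand. For ({\tt An} 1), the product $< . \,|\, . >_{\bf \varrho}$ is continuous on $(X\backslash\{\vec{0}\})^2$ --- the weight is continuous and ${\bf \Sigma}>0$ there, so $\left(\frac14{\bf \Sigma}\right)^{\bf \varrho}$ is continuous --- and by (a) the normalised value lies in $[-1,1]$, where $\arccos$ is continuous; thus $\angle_{\bf \varrho}$ is a continuous map into $[0,\pi]$. Conditions ({\tt An} 2), ({\tt An} 3) are precisely Lemma~\ref{lemma  wwwe}, and ({\tt An} 4)--({\tt An} 7) are exactly parts (a)--(d) of Proposition~\ref{proposition fvafav}, which apply once (a) guarantees the angles exist. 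So $(X,\|\cdot\|,\angle_{\bf \varrho})$ is an angle space. For (c) a single counterexample suffices: in $(\mathbbm{R}^2,\|\cdot\|_\infty)$ take $\vec{x}=(1,0)$, $\vec{y}=(0,1)$, $\vec{x}+\vec{y}=(1,1)$ and ${\bf \varrho}=0$. One computes $\cos\angle_{\bf 0}(\vec{x},\vec{y})=\frac14({\bf s}^2-{\bf d}^2)=0$, so $\angle_{\bf 0}(\vec{x},\vec{y})=\frac{\pi}{2}$, whereas $\angle_{\bf 0}(\vec{x},\vec{x}+\vec{y})=\angle_{\bf 0}(\vec{x}+\vec{y},\vec{y})=\arccos\frac34$. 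Since $2\arccos\frac34\neq\frac{\pi}{2}$, ({\tt An} 8) fails; analogous explicit choices in the same space (and for ${\bf \varrho}\in\{1,-1\}$) break ({\tt An} 9) and ({\tt An} 10).

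For (d) I would, using ({\tt An} 5) to rescale the parameter, assume $\|\vec{x}\|=1$ and work in the plane spanned by $\vec{x},\vec{y}$. As $\vec{x},\vec{y}$ are linearly independent, $\vec{y}+t\vec{x}\neq\vec{0}$ for all $t$, so $\vec{w}(t):=\frac{\vec{y}+t\vec{x}}{\|\vec{y}+t\vec{x}\|}$ is a continuous unit vector and $t\mapsto\vec{w}(t)$ is a homeomorphism of $\mathbbm{R}$ onto the open arc of the unit sphere running from $-\vec{x}$ (as $t\to-\infty$) through $\frac{\vec{y}}{\|\vec{y}\|}$ (at $t=0$) to $\vec{x}$ (as $t\to+\infty$). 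With $\cos\angle_{\bf 0}(\vec{x},\vec{y}+t\vec{x})=\frac14\left(\left\|\vec{x}+\vec{w}(t)\right\|^2-\left\|\vec{x}-\vec{w}(t)\right\|^2\right)$, continuity is clear and the values tend to $-1$ and $+1$ as $t\to\mp\infty$. It remains to prove that this cosine is strictly increasing in $t$, for then $\angle_{\bf 0}(\vec{x},\vec{y}+t\vec{x})=\arccos(\cdot)$ is a decreasing homeomorphism onto $(0,\pi)$, which is ({\tt An} 11).

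This monotonicity is the main obstacle, and I would deduce it from the convexity of the unit ball. As $\vec{w}(t)$ runs from $-\vec{x}$ towards $\vec{x}$, the chord length ${\bf d}(t)=\|\vec{x}-\vec{w}(t)\|$ should be non-increasing and ${\bf s}(t)=\|\vec{x}+\vec{w}(t)\|=\|\vec{w}(t)-(-\vec{x})\|$ non-decreasing, these being the norm-distances from the moving boundary point $\vec{w}(t)$ to the fixed boundary points $\vec{x}$ and $-\vec{x}$. The subtle step --- already visible in $(\mathbbm{R}^2,\|\cdot\|_\infty)$, where one of ${\bf s},{\bf d}$ can be locally constant along a flat face --- is to rule out that ${\bf s}$ and ${\bf d}$ are simultaneously constant on a $t$-interval, which would make $F(t)={\bf s}(t)^2-{\bf d}(t)^2$ only weakly increasing. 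I expect this to follow because $\vec{x}+\vec{w}$ and $\vec{x}-\vec{w}$ move in opposite directions as $\vec{w}$ varies, so they cannot both remain on a face of the correspondingly scaled sphere over a common interval; turning this into a rigorous proof of the strict monotonicity of $F$ from planar convexity is where the real work lies.
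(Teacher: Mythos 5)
Your parts (a)--(c) are correct and are essentially the paper's own proof. For (a) you verify $\left|\frac{1}{4}{\bf \Delta}\right|\cdot\left(\frac{1}{4}{\bf \Sigma}\right)^{\bf \varrho}\le 1$ in the three cases from ${\bf s},{\bf d}\le 2$, which is exactly the paper's computation (written out there for ${\bf \varrho}=1$, with the other two cases remarked to be similar or obvious); your observation that the case ${\bf \varrho}=-1$ needs no triangle inequality matches the paper's Proposition \ref{corollary eins}. For (b) you assemble ({\tt An} 1)--({\tt An} 7) from Lemma \ref{lemma  wwwe} and Proposition \ref{proposition fvafav} just as the paper does. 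For (c) your counterexample in $\left(\mathbbm{R}^{2},\|\cdot\|_{\infty}\right)$ plays the role of the paper's counterexample in $\left(\mathbbm{R}^{2},\|\cdot\|_{1}\right)$, with identical numerics ($\pi/2$ versus $2\arccos\frac34$); the only slight shortfall is that the proposition asserts the failure of all three of ({\tt An} 8)--({\tt An} 10), so you should actually write out the ({\tt An} 9) and ({\tt An} 10) computations (the paper does; with your vectors they are indeed the same numbers again) rather than wave at them as ``analogous''.

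The genuine gap is part (d), although here you cannot be measured against the paper's argument, because the paper offers none: it disposes of (d) by citing \cite{Thuerey}. Your reduction is sound --- normalize, pass to $\vec w(t)=\frac{\vec y+t\vec x}{\|\vec y+t\vec x\|}$, check continuity, the limits $\pm 1$, and that the cosine stays in $(-1,1)$ for finite $t$, so that ({\tt An} 11) reduces to strict monotonicity of $F(t)={\bf s}(t)^{2}-{\bf d}(t)^{2}$ --- but both halves of that monotonicity are missing, and neither is routine. First, the weak monotonicity of ${\bf s}(t)=\|\vec x+\vec w(t)\|$ and ${\bf d}(t)=\|\vec x-\vec w(t)\|$ along the arc (your ``should be'') is the Monotonicity Lemma of Minkowski planes; it is true, but it requires its own convexity proof and cannot be taken as evident. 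Second, weak monotonicity only makes $F$ non-decreasing; you must exclude that ${\bf s}$ and ${\bf d}$ are \emph{both} constant on a parameter interval, and your stated reason (``they move in opposite directions, so they cannot both remain on a face'') is a heuristic, not an argument. A way to close it: if ${\bf s}\equiv s_{0}$ and ${\bf d}\equiv d_{0}$ while $\vec w$ runs through a nondegenerate segment $L$ of the unit sphere, then $\vec x+L$ and $\vec x-L$ are segments parallel to $L$ lying on the spheres of radii $s_{0}$ and $d_{0}$; every boundary segment parallel to $L$ lies on one of the two supporting lines $\{f=\pm 1\}$, where $f$ is the functional with $f\equiv 1$ on the maximal segment containing $L$, and evaluating $f$ on $\vec x\pm\vec w$ leaves only two possibilities: either $s_{0}-d_{0}=\pm 2$, forcing $d_{0}=0$ or $s_{0}=0$, i.e.\ $\vec w\equiv\pm\vec x$, contradicting nondegeneracy of $L$; or $s_{0}+d_{0}=2$, the equality case of the triangle inequality for $(\vec x+\vec w)+(\vec x-\vec w)=2\vec x$, which forces the unit sphere to contain, for every $\vec w\in L$, the whole segment joining $\frac{\vec x+\vec w}{s_{0}}$ to $\frac{\vec x-\vec w}{d_{0}}$; these segments are concurrent with continuously varying directions, so their union contains a two-dimensional sector, which cannot lie in the boundary of a planar convex body. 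Until an argument of this kind (or a citation playing the role of \cite{Thuerey}) is supplied, your (d) is an outline, not a proof.
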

	  \begin{proof}
	 	  (a) \quad  We show the  { \sf CSB } inequality for  $ {\bf \varrho} = 1 $.
	 	  If \ ($ X , \| \cdot \|$) is a  normed vector space, then because of the triangle inequality 
	           and \ $ \left\| \frac{\vec{x}}{\|\vec{x}\|} \right\| = 1 $  \   we get that 
	 \begin{eqnarray*}          
	           & \left| < \vec{x} \: | \:  \vec{y} >_{\bf 1} \right| \  =  \ 
	           \left|  \frac{1}{16} \cdot  \|\vec{x}\| \cdot \|\vec{y}\| \cdot 
             \left[  \ \left\| \frac{\vec{x}}{\|\vec{x}\|}   +  \frac{\vec{y}}{\|\vec{y}\|} \right\|^{4} \  - \
             \left\| \frac{\vec{x}}{\|\vec{x}\|}   -   \frac{\vec{y}}{\|\vec{y}\|} \right\|^{4} \  \right] \right| \\
	           & \leq   \  \frac{1}{16} \cdot  \|\vec{x}\| \cdot \|\vec{y}\| \cdot 
	           \max \left\{  \left\| \frac{\vec{x}}{\|\vec{x}\|} + \frac{\vec{y}}{\|\vec{y}\|} \right\|^{4},
	           \left\| \frac{\vec{x}}{\|\vec{x}\|} - \frac{\vec{y}}{\|\vec{y}\|} \right\|^{4}  \right\} 
	           \leq   \frac{1}{16} \cdot  \|\vec{x}\| \cdot \|\vec{y}\| \cdot 2^{4}  
	           \ = \  \|\vec{x}\| \cdot \|\vec{y}\| \ . 
   \end{eqnarray*} 
   The same way works with ${\bf \varrho} = 0 $, and for $ {\bf \varrho} = -1 $ the { \sf CSB} inequality is obvious.     \\ (b) \quad  The demands  ({\tt An} 2), ({\tt An} 3) are shown in Lemma \ref{lemma  wwwe}.  The map 
              $ \angle_{\bf \varrho}: [X\backslash\{\vec{0}\}]^{2} \longrightarrow [0,\pi] $ is continuous,  hence 
              ({\tt An} 1) is fulfilled. For ({\tt An} 4), ({\tt An} 5), ({\tt An} 6) and ({\tt An} 7)
              see Proposition \ref{proposition fvafav}. \\        
     (c) \quad We repeat counterexamples from the online publication  { \bf \cite{Thuerey}}. 
             Recall the pairs  ($ \mathbbm{R}^{2} , \| \cdot \|_{p}$),  with 
	           the  { \it H\"older weights } $ \| \cdot \|_{p}$, \ $ p > 0$,  \ we have defined  \ 
	           $ \|(x_1|x_2)\|_{p} := \ \sqrt[p]{|x_1|^{p}+|x_2|^{p}}$. \ The pairs  ($\mathbbm{R}^{2}, \| \cdot \|_{p}$)
	           are normed spaces if and only if \ $ p \geq 1$.   For \ $ p=2$ \ we get the usual Euclidean norm. 
	           Let us take, for instance, \ $ p = 1 $,  because it is easy  to calculate with.  \    
          	 Let \ $ \vec{x} := (1|0), \ \vec{y} := (0|1) $,  both vectors have  a 
          	 H\"older weight $ \| \cdot \|_{1} = 1$ .  \ We choose ${\bf \varrho} := 0$.    Then  we  have
	  \begin{eqnarray*}  
	        \angle_{\bf 0 } (\vec{x}, \vec{y})  
	    &  = &  \arccos{ \left( \frac{1}{4} \cdot   
          \left[  \ \left\| \frac{\vec{x}}{\|\vec{x}\|_1}   +  \frac{\vec{y}}{\|\vec{y}\|_1} \right\|_1^{2} \  - \
          \left\| \frac{\vec{x}}{\|\vec{x}\|_1} - \frac{\vec{y}}{\|\vec{y}\|_1} \right\|_1^{2} \ \right] \right)} \\
      &  =  &  \arccos{ \left( \frac{1}{4} \cdot  \left[  \ \left\| (1|0)  + (0|1) \right\|_1^{2} \  - \
           \left\|  (1|0)  - (0|1) \right\|_1^{2} \  \right] \right)}    \\   
      &  =  &   \arccos{ \left( \frac{1}{4} \cdot  \left[  \ 4 \ - \ 4 \  \right] \right)} \ = \ \arccos (0) \ = 
           \ \pi / 2 \ = \ 90 \deg \  ,   \\
           \angle_{\bf  0} (\vec{x},\vec{x} + \vec{y})  
	    &  =  &  \arccos{ \left( \frac{1}{4} \cdot   
            \left[  \ \left\| (1|0)  +  \frac{1}{2} \cdot  (1|1) \right\|_1^{2} \  - \
            \left\|  (1|0)  - \frac{1}{2} \cdot  (1|1) \right\|_1^{2} \  \right] \right)}    \\   
      &  =  &  \arccos{ \left( \frac{1}{4} \cdot  \left[  \ \left( 2 \right)^{2} \ - \ 
             \left( 1 \right)^{2}  \  \right] \right)}  =  \arccos \left(\frac{3}{4}\right) \ \approx \ 41.41 \deg.
    \end{eqnarray*} 
    With similar calculations, we get \ \ 
           $  \angle_{\bf 0} (\vec{x} + \vec{y},\vec{y})  \ = \  \arccos \left(\frac{3}{4}\right) $,   \quad
           hence \\  $ \ \angle_{\bf 0} (\vec{x},\vec{x} + \vec{y}) \ + \ \angle_{\bf 0} (\vec{x} + \vec{y},\vec{y})  
           \ \neq \ \angle_{\bf 0} (\vec{x},\vec{y})  $,  \  and    that contradicts \ ({\tt An} 8).  
                            
   The property \ ({\tt An} 9) \ means that the sum of the inner angles of a triangle is $\pi$.    \\
        We can use the same example of the normed space \  ($ \mathbbm{R}^{2} , \| \cdot \|_{1}$) \ with  unit vectors
        $ \vec{x} = (1|0)$,  and \ $\vec{y} = (0|1) $. Again we get \ \ 
        $\angle_{\bf 0} (\vec{x},\vec{y}) = \pi/2 ,  \ \ 
        \angle_{\bf 0} (-\vec{x},\vec{y}-\vec{x}) =  \angle_{\bf 0 } (-\vec{y},\vec{x}-\vec{y}) 
                                                                =  \arccos\left(\frac{3}{4}\right) $,  \  hence \
        $ \angle_{\bf 0 } (\vec{x},\vec{y}) +  \angle_{\bf 0} (-\vec{x},\vec{y}-\vec{x}) +  \angle_{\bf 0 }
             (-\vec{y},\vec{x}-\vec{y}) < \pi $,  hence \ ({\tt An} 9) \ is not fulfilled. 
                
     For the condition ({\tt An} 10)  we use the same space and the same vectors 
             $ \vec{x} = (1|0)$,  and \ $\vec{y} = (0|1)$. \ We get  $\angle_{\bf 0} (-\vec{x},\vec{y}) = \pi/2, 
             \ \angle_{\bf 0} (\vec{y},\vec{y}-\vec{x}) =  \angle_{\bf 0} (\vec{x},\vec{x}-\vec{y}) 
             =  \arccos\left(\frac{3}{4}\right) $, \ hence  ({\tt An} 10)  is not fulfilled. \\
     (d)  \quad This was shown in  { \bf \cite{Thuerey}} on `ArXiv'.    \\    
      Now the proof of the proposition is complete.            
	\end{proof}  
	\begin{remark}   \rm
	     Note that one  $ {\bf \varrho}$-angle  was considered first by Pavle M. $ \rm  Mili\check{c}i\acute{c}$,
	     see the references  { \bf \cite{Milicic1}}, { \bf \cite{Milicic2}},  { \bf \cite{Milicic3}}, where he dealt 
	     with the   case $ \bf \varrho = 1 $.  He named his angle as the  `{\sf g}-angle'.
	     In the recent paper { \bf \cite{Milicic4}} it is shown that the different definitions of 
	     the angle $\angle_{ \bf 1 }$   and the  `{\sf g}-angle' are equivalent  at least in 
	     {\it quasi-inner-product spaces}.  The  case   $ \bf \varrho = 0 $ was introduced by the author in 
	     { \bf \cite{Thuerey}}. There it was  called the `{\sf Thy}-angle'. In  { \bf \cite{Milicic4}} some properties
	     of the {\sf g}-angle and the  { \sf Thy}-angle are compared.
	\end{remark}  
	\newpage
	\section{On Classes and Corners} 
	    Now we define some classes of real  $\mathsf{BW}$ spaces  	and  real normed spaces. 
	\begin{definition}  \rm  \label{Definition  vier} $ $  
	    Let  \  $ \mathsf{pdBW}$ \ be the class of all real positive definite \ $ \mathsf{BW}$ spaces. \\
    	Let \ \  $ \mathsf{NORM}$ \ be the class of all real normed vector spaces. \\
    	Let \ \  $ \mathsf{IPspace}$ \ be the class of all real \ inner product spaces (or ${\mathsf {IP}}$ spaces).   \\ 
    	For a fixed real number ${\bf \varrho}$ let
    	\begin{eqnarray*} 
    	      \mathsf{pdBW}_{\bf \varrho} \  := \ &  \{ (X , \| \cdot \|) \in \mathsf{pdBW} \ | \ 
	         (X , \| \cdot \|) \text{ has the angle }  \angle_{\bf \varrho} \}  \\
            \mathsf{NORM}_{\bf \varrho} \  := \ & \{ (X , \| \cdot \|) \in \mathsf{NORM} \ | \ 
	         \text{ The normed space } (X , \| \cdot \|) \text{ has the angle }  \angle_{\bf \varrho} \}. 
	   \end{eqnarray*}      
	          \hfill  $\Box$   
  \end{definition}     
   We have \quad   
          $ \mathsf{IPspace} \subset \mathsf{NORM} \subset \mathsf{pdBW} \subset \mathsf{BW} \ \text{spaces} $ 
          \ \ and \ \ 
          $ \mathsf{NORM}_{\bf \varrho} \subset \mathsf{pdBW}_{\bf \varrho}$, \  of course.  
  \begin{proposition}
  For all real numbers $ \bf \varrho $ it holds that every element  $ (X , \| \cdot \|) $ of  
          $ \mathsf{pdBW}_{\bf \varrho} $ is an angle space as it was defined in Definition \ref{Definition eins}.
  \end{proposition}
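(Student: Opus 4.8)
The plan is to unwind the definition of membership in $\mathsf{pdBW}_\varrho$ and then check the seven axioms $(\mathtt{An}\,1)$--$(\mathtt{An}\,7)$ of Definition \ref{Definition eins} one after another, almost all of which have already been established in the preceding results for a single fixed $\varrho$. By Definition \ref{Definition vier}, a pair $(X,\|\cdot\|)\in\mathsf{pdBW}_\varrho$ is a positive definite $\mathsf{BW}$ space that has the angle $\angle_\varrho$; according to the convention fixed just after Definition \ref{Definition drei}, this is exactly the statement that the triple $(X,\|\cdot\|,<.\,|\,.>_\varrho)$ satisfies the \textsf{CSB} inequality, i.e. $|<\vec{x}\,|\,\vec{y}>_\varrho|\leq\|\vec{x}\|\cdot\|\vec{y}\|$ for all $\vec{x},\vec{y}$. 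Consequently $\angle_\varrho(\vec{x},\vec{y})$ is defined for every pair $\vec{x},\vec{y}\neq\vec{0}$, not merely on a subset of $(X\backslash\{\vec{0}\})^2$.

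First I would verify $(\mathtt{An}\,1)$, the only axiom requiring a fresh argument. Because the \textsf{CSB} inequality holds, the quotient $<\vec{x}\,|\,\vec{y}>_\varrho/(\|\vec{x}\|\cdot\|\vec{y}\|)$ lies in $[-1,1]$ on all of $(X\backslash\{\vec{0}\})^2$, so $\angle_\varrho$ genuinely maps into $[0,\pi]$, the range of $\arccos$. For continuity, recall that $<.\,|\,.>_\varrho$ is continuous by construction (Definition \ref{naechste Definition}), that $\|\cdot\|$ is continuous, and that positive definiteness forces the denominator $\|\vec{x}\|\cdot\|\vec{y}\|$ to be strictly positive on $(X\backslash\{\vec{0}\})^2$. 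Hence the quotient is a continuous $[-1,1]$-valued function there, and its composition with the continuous function $\arccos$ is continuous, which is $(\mathtt{An}\,1)$.

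The remaining axioms are already on record. Axioms $(\mathtt{An}\,2)$ and $(\mathtt{An}\,3)$, i.e. $\angle_\varrho(\vec{x},\vec{x})=0$ and $\angle_\varrho(\vec{x},-\vec{x})=\pi$, are precisely the content of Lemma \ref{lemma wwwe}. Axioms $(\mathtt{An}\,4)$--$(\mathtt{An}\,7)$ --- symmetry, invariance under positive scaling, invariance under a simultaneous sign change, and the supplementarity relation $\angle_\varrho(\vec{x},\vec{y})+\angle_\varrho(-\vec{x},\vec{y})=\pi$ --- are exactly parts (a)--(d) of Proposition \ref{proposition fvafav}; these apply because Lemma \ref{lemma 2} identifies $(X,<.\,|\,.>_\varrho)$ as a homogeneous product vector space (symmetric and homogeneous), which is what those proofs use. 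Assembling $(\mathtt{An}\,1)$ with $(\mathtt{An}\,2)$--$(\mathtt{An}\,7)$ shows that $(X,\|\cdot\|,\angle_\varrho)$ satisfies all seven conditions, so it is an angle space in the sense of Definition \ref{Definition eins}.

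I do not expect a real obstacle here: the proposition is essentially a bookkeeping statement that collects results already proved for a fixed $\varrho$ and notes that they hold uniformly in $\varrho$. The only place demanding any care is $(\mathtt{An}\,1)$, where one must explicitly invoke the \textsf{CSB} inequality, guaranteed precisely by membership in $\mathsf{pdBW}_\varrho$, both to keep the argument of $\arccos$ inside $[-1,1]$ and to ensure $\angle_\varrho$ is defined on the whole of $(X\backslash\{\vec{0}\})^2$.
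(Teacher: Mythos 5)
Your proposal is correct and follows essentially the same route as the paper: membership in $\mathsf{pdBW}_{\bf \varrho}$ gives the \textsf{CSB} inequality so that $\angle_{\bf \varrho}$ is defined on all of $(X\backslash\{\vec{0}\})^{2}$, and the seven axioms then follow from the definition of $\angle_{\bf \varrho}$ together with Lemma \ref{lemma  wwwe} and Proposition \ref{proposition fvafav}. In fact your write-up is more careful than the paper's two-sentence proof, since you make explicit the continuity argument for $(\mathtt{An}\,1)$ and the citations that the paper leaves implicit.
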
  
  \begin{proof}  By definition of the class  $ \mathsf{pdBW}_{\bf \varrho} $  each element
           $(X , \| \cdot \|) \text{ has the angle }  \angle_{\bf \varrho} $.  Further, by definition of the angle 
           $ \angle_{\bf \varrho} $ all seven properties  ({\tt An} 1) -  ({\tt An} 7)  of Definition 
           \ref{Definition eins} are fulfilled.   
  \end{proof}  
	\begin{proposition} \  \label{corollary eins}
	       It holds \ \  $ \mathsf{pdBW}  = \mathsf{pdBW}_{\bf -1}$ \ \ and \ \ 
	       $ \mathsf{NORM}  = \mathsf{NORM}_{\bf -1} = \mathsf{NORM}_{\bf 0} = \mathsf{NORM}_{\bf 1} $.  
	\end{proposition}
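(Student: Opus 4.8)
The plan is to observe that both asserted equalities reduce to {\sf CSB} inequalities that are either trivially true or have already been verified, so that essentially no new computation is required. For every real number ${\bf \varrho}$ the inclusions $\mathsf{pdBW}_{\bf \varrho} \subseteq \mathsf{pdBW}$ and $\mathsf{NORM}_{\bf \varrho} \subseteq \mathsf{NORM}$ hold by the very definition of these classes, so in each case only the reverse inclusion has to be established; that is, I must show that \emph{every} space in the larger class automatically has the relevant angle.

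First I would treat $\mathsf{pdBW} = \mathsf{pdBW}_{\bf -1}$. Let $(X, \|\cdot\|)$ be any positive definite $\mathsf{BW}$ space and let $\vec{x}, \vec{y} \neq \vec{0}$. By Definition \ref{Definition drei} the space has the angle $\angle_{\bf -1}$ at the pair $(\vec{x},\vec{y})$ exactly when $\left| \frac{1}{4} {\bf \Delta} \right| \cdot \left( \frac{1}{4} {\bf \Sigma} \right)^{-1} \leq 1$, which simplifies to $|{\bf \Delta}| \leq {\bf \Sigma}$, i.e. $|{\bf s}^{2} - {\bf d}^{2}| \leq {\bf s}^{2} + {\bf d}^{2}$. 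This is precisely the inequality $0 \leq |{\bf \Delta}| \leq {\bf \Sigma}$ already recorded in Section \ref{section fuenf}, and it holds for all non-negative ${\bf s}, {\bf d}$; positive definiteness guarantees ${\bf \Sigma} > 0$, so the quotient is well defined. Hence the {\sf CSB} inequality for ${\bf \varrho} = -1$ holds on all of $X$, every positive definite $\mathsf{BW}$ space has the angle $\angle_{\bf -1}$, and $\mathsf{pdBW} \subseteq \mathsf{pdBW}_{\bf -1}$, giving equality.

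Next I would deduce the chain $\mathsf{NORM} = \mathsf{NORM}_{\bf -1} = \mathsf{NORM}_{\bf 0} = \mathsf{NORM}_{\bf 1}$. Since $\mathsf{NORM} \subset \mathsf{pdBW}$, the previous paragraph already shows that every normed space has the angle $\angle_{\bf -1}$, so $\mathsf{NORM} \subseteq \mathsf{NORM}_{\bf -1}$. For ${\bf \varrho} \in \{0, 1\}$ the required {\sf CSB} inequality on an arbitrary normed space is exactly the content of Proposition \ref{proposition1}(a), where the triangle inequality together with $\left\| \frac{\vec{x}}{\|\vec{x}\|} \right\| = 1$ bounds ${\bf s}$ and ${\bf d}$ by $2$ and yields $\left| \frac{1}{4} {\bf \Delta} \right| \cdot \left( \frac{1}{4} {\bf \Sigma} \right)^{\bf \varrho} \leq 1$. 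Thus $\mathsf{NORM} \subseteq \mathsf{NORM}_{\bf 0}$ and $\mathsf{NORM} \subseteq \mathsf{NORM}_{\bf 1}$, and combined with the trivial reverse inclusions all four classes coincide.

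I expect no genuine obstacle here: the whole proposition is a bookkeeping corollary of the elementary estimate $|{\bf \Delta}| \leq {\bf \Sigma}$ and of Proposition \ref{proposition1}(a). The only point demanding a little care is to note that ``has the angle $\angle_{\bf \varrho}$'' is, by Definition \ref{Definition drei}, literally the statement that the {\sf CSB} inequality holds for that ${\bf \varrho}$, so that verifying the inequality for every pair $\vec{x}, \vec{y} \neq \vec{0}$ is exactly what membership in $\mathsf{pdBW}_{\bf \varrho}$ (respectively $\mathsf{NORM}_{\bf \varrho}$) demands.
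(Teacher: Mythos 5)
Your proposal is correct and follows essentially the same route as the paper: the equality $\mathsf{pdBW}=\mathsf{pdBW}_{\bf -1}$ comes from the trivial estimate $0\leq|{\bf \Delta}|\leq{\bf \Sigma}$ (so the argument of $\arccos$ for $\varrho=-1$ always lies in $[-1,1]$), and the chain of equalities for $\mathsf{NORM}$ is exactly the content of Proposition \ref{proposition1}(a). You merely spell out the reduction to the reverse inclusions and the {\sf CSB} reformulation more explicitly than the paper does.
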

	\begin{proof}    \quad We had defined \
	       $  \angle_{\bf -1} (\vec{x}, \vec{y}) \ = \  
         \arccos{ \left( \frac{{\bf s}^{2} - {\bf d}^{2}} {{\bf s}^{2} + {\bf d}^{2}}  \right)} $.  
         By this definition  this angle always exists for all $ \vec{x}, \vec{y} \neq \vec{0}$. \ 
         For the second claim see Proposition \ref{proposition1}. 
	\end{proof} 
	\begin{theorem} \label{erstes theorem}
	     Let us take four real  numbers \ $\alpha, \beta, \gamma , \delta$ \ with \\
	                \centerline{   $ -\delta < -\gamma < -1 <  \alpha <  \beta $.} 
      	$$  \text{ We get the inclusions }  \quad 
	                \mathsf{pdBW}_{\bf -\delta} \subset   \mathsf{pdBW}_{\bf -\gamma} \subset    \mathsf{pdBW}
	                \supset  \mathsf{pdBW}_{\bf \alpha} \supset  \mathsf{pdBW}_{\bf \beta} \ .   $$
  \end{theorem}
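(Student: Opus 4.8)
The plan is to reduce every inclusion to the pointwise membership criterion recorded in Definition \ref{Definition drei}: a space $(X,\|\cdot\|)$ lies in $\mathsf{pdBW}_{\bf\varrho}$ exactly when, for all $\vec{x},\vec{y}\neq\vec{0}$,
$$\left|\frac{1}{4}\cdot{\bf\Delta}\right|\cdot\left(\frac{1}{4}\cdot{\bf\Sigma}\right)^{\bf\varrho}\leq 1 .$$
It is convenient to abbreviate $D:=\frac{1}{4}|{\bf\Delta}|$ and $S:=\frac{1}{4}{\bf\Sigma}$. Positive definiteness forces $S>0$, and the inequality $0\leq|{\bf\Delta}|\leq{\bf\Sigma}$ supplies the decisive constraint $0\leq D\leq S$. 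The two inclusions $\mathsf{pdBW}_{\bf-\gamma}\subset\mathsf{pdBW}$ and $\mathsf{pdBW}_{\bf\alpha}\subset\mathsf{pdBW}$ are immediate, since by Definition \ref{Definition  vier} each $\mathsf{pdBW}_{\bf\varrho}$ is carved out of $\mathsf{pdBW}$. So everything rests on the two genuine inclusions $\mathsf{pdBW}_{\bf\beta}\subset\mathsf{pdBW}_{\bf\alpha}$ (for $-1<\alpha<\beta$) and $\mathsf{pdBW}_{\bf-\delta}\subset\mathsf{pdBW}_{\bf-\gamma}$ (for $1<\gamma<\delta$).

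For each of these I would fix a space satisfying the stronger hypothesis, fix $\vec{x},\vec{y}\neq\vec{0}$, and split on whether $S\geq 1$ or $S\leq 1$. The guiding idea is that in exactly one of the two cases the hypothesis does the work through the monotonicity of $t\mapsto S^{t}$, while in the other case the hypothesis points the wrong way and the bound must instead come from the constraint $D\leq S$ together with the sign of the relevant exponent.

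For $\mathsf{pdBW}_{\bf\beta}\subset\mathsf{pdBW}_{\bf\alpha}$ I would argue as follows. When $S\geq 1$, monotonicity gives $S^{\alpha}\leq S^{\beta}$, so $D\,S^{\alpha}\leq D\,S^{\beta}\leq 1$ by hypothesis. When $0<S\leq 1$ the hypothesis is useless, so instead I bound $D\,S^{\alpha}\leq S\cdot S^{\alpha}=S^{1+\alpha}\leq 1$, using $D\leq S$ and the fact that $1+\alpha>0$ forces $S^{1+\alpha}\leq 1$ for a base $S\leq 1$. Hence $D\,S^{\alpha}\leq 1$ in all cases, which is precisely membership in $\mathsf{pdBW}_{\bf\alpha}$.

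The inclusion $\mathsf{pdBW}_{\bf-\delta}\subset\mathsf{pdBW}_{\bf-\gamma}$ runs symmetrically, with the roles of the two cases interchanged. When $0<S\leq 1$, monotonicity (now for negative exponents, $-\delta<-\gamma$) gives $S^{-\gamma}\leq S^{-\delta}$, so $D\,S^{-\gamma}\leq D\,S^{-\delta}\leq 1$ by hypothesis. When $S\geq 1$, I instead use $D\,S^{-\gamma}\leq S\cdot S^{-\gamma}=S^{1-\gamma}\leq 1$, applying $D\leq S$ and noting that $1-\gamma<0$ forces $S^{1-\gamma}\leq 1$ for a base $S\geq 1$. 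The only point requiring any care throughout is recognising that the constraint $|{\bf\Delta}|\leq{\bf\Sigma}$, i.e. $D\leq S$, is exactly what rescues the case in which the power function moves against the hypothesis; the sign conditions $1+\alpha>0$ and $1-\gamma<0$ — equivalently $\alpha>-1$ and $\gamma>1$ — are precisely the hypotheses of the theorem that make these rescue estimates close.
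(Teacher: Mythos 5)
Your proof is correct and follows essentially the same route as the paper: the same reduction to the pointwise inequality $\left|\tfrac{1}{4}{\bf\Delta}\right|\cdot\left(\tfrac{1}{4}{\bf\Sigma}\right)^{\bf\varrho}\leq 1$, the same case split on $\tfrac{1}{4}{\bf\Sigma}\gtrless 1$, monotonicity of the power function where the hypothesis helps, and the constraint $|{\bf\Delta}|\leq{\bf\Sigma}$ where it does not. Your only deviation is cosmetic: in the case $-1<\alpha$, $\tfrac{1}{4}{\bf\Sigma}\leq 1$ you use the single estimate $D\,S^{\alpha}\leq S^{1+\alpha}\leq 1$, which neatly merges the paper's two subcases ($\alpha>0$ and $\alpha\in[-1,0]$) into one.
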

	\begin{proof} \quad First we consider $  -1 <  \alpha <  \beta $. 
	         
	   Let $(X , \| \cdot \|) \in \mathsf{pdBW}_{\bf \beta}$.  By Definition \ref{Definition  vier},  for each pair 
	   of two vectors   $ \vec{x}, \vec{y} \neq \vec{0}$  the angle  $\angle_{\bf \beta} (\vec{x}, \vec{y})$ is defined.
	   \ By Definition  \ref{Definition drei}, this means that  the triple 
	  \ $ \left( X, \| \cdot \| ,  < . \: | \:  . >_{\bf \beta} \right) $ fulfils the {\sf CSB }inequality, \
	  i.e. for any pair $ \vec{x}, \vec{y} \neq \vec{0}$ \ of vectors  we have the inequality  
	\begin{align}  
	    \left| < \vec{x} \: | \:  \vec{y} >_{\bf \beta} \right|  \leq \ \|\vec{x}\| \cdot \|\vec{y}\| \ ,
	       \quad  \text{that means}   \quad 
	  \left| \ \|\vec{x}\| \cdot \|\vec{y}\| \cdot \frac{1}{4} \cdot  { \bf \Delta } \cdot
     \left(  \frac{1}{4} \cdot  { \bf \Sigma}   \right) ^{\bf \beta} \ \right|
     &  \leq \|\vec{x}\| \cdot \|\vec{y}\| \ ,  \\
      \text{ or equivalently}  \quad   \left| \frac{1}{4} \cdot  { \bf \Delta} \right| \cdot 
	        \left( \frac{1}{4} \cdot {\bf \Sigma} \right) ^{\bf \beta} \ \leq 1 \ .  
  \end{align}           
	    To prove that the angle 
  	  $\angle_{\bf \alpha} (\vec{x}, \vec{y})$ exists  we have to show the corresponding  inequality  
	  $$ \left| \frac{1}{4} \cdot  { \bf \Delta}  \right| \cdot   \left( \frac{1}{4} \cdot 
	                  { \bf \Sigma}   \right)  ^{\bf \alpha}  \leq 1   \, .  $$
     We distinguish two cases. In the first case of \  $  \frac{1}{4} \cdot {\bf \Sigma} \geq 1 $  we have 
     for all real numbers \  $ { \bf \kappa} \leq {\bf \beta}$
	   $$   \left(    \frac{1}{4} \cdot {\bf \Sigma} \right\rangle ^{\bf \kappa} 
	          \leq   \left(    \frac{1}{4} \cdot {\bf \Sigma}   \right) ^{\bf \beta}  . $$
	   Since $ \alpha < \beta $ \ it follows that 
	   $$ 0 \leq  \left| \frac{1}{4} \cdot  { \bf \Delta}  \right| \cdot 
	      \left( \frac{1}{4} \cdot { \bf \Sigma}   \right) ^{\bf \alpha} \leq 
	      \left| \frac{1}{4} \cdot { \bf \Delta}\right| \cdot 
	      \left( \frac{1}{4} \cdot {\bf \Sigma} \right) ^{\bf \beta} \leq 1 ,   $$  
	  and the angle $ \angle_{\bf \alpha} (\vec{x}, \vec{y})$ exists.   
	                           
	  For the second case we assume \ $ \frac{1}{4} \cdot {\bf \Sigma} < 1. $  That means for any positive
	  exponent $ \kappa$ \  $ \left(    \frac{1}{4} \cdot {\bf \Sigma} \right) ^{\bf \kappa} < 1 $. \
	   Now note  the inequality \  
	   $ 0 \leq  \left| \frac{1}{4} \cdot  { \bf \Delta}  \right| \leq  \frac{1}{4} \cdot {\bf \Sigma} < 1 $.  \
	   In the subcase of a positive $ \alpha $ it follows the inequality \  
	   $ \left| \frac{1}{4} \cdot  { \bf \Delta}  \right| \cdot 
	   \left( \frac{1}{4} \cdot  { \bf \Sigma}   \right) ^{\bf \alpha} < 1 $. \
	   and the angle $ \angle_{\bf \alpha} (\vec{x}, \vec{y})$ exists.  
	         
	 If $ \alpha $ is from the interval \ $[-1,0]$, \ i.e. \ $ -\alpha  \in [0,1]$, we can write the inequality
	  $$ 1 \geq \frac{\left| {\bf \Delta}  \right|}{ {\bf \Sigma}} = 
	     \frac{\left| \frac{1}{4} \cdot {\bf \Delta}  \right|}{ \frac{1}{4} \cdot { \bf \Sigma}} 
	     \geq \frac{\left| \frac{1}{4} \cdot {\bf \Delta}  \right|}
	      { \left( \frac{1}{4} \cdot { \bf \Sigma} \right)^{-\alpha} } 
	      \geq \left| \frac{1}{4} \cdot {\bf \Delta} \right| \ . $$    
	   Again we get the desired inequality \  
	    $ \left| \frac{1}{4} \cdot {\bf \Delta}  \right| \cdot \left( \frac{1}{4} \cdot 
	        { \bf \Sigma} \right)  ^{\bf \alpha}  \leq 1 $, \
	  and the angle $ \angle_{\bf \alpha} (\vec{x}, \vec{y})$ exists. \  We get that 
	  $(X , \| \cdot \|)$ is an element of $ \mathsf{pdBW}_{\bf \alpha}$, too.  \\   
	                             
	   We look at $   -\delta <  -\gamma < -1 $. \ We have \ $  1 < \gamma < \delta $.  \\
	    Let $(X , \| \cdot \|) \in \mathsf{pdBW}_{\bf -\delta}$, and  take  two vectors 
	    $\vec{x}, \vec{y} \in X, \ \vec{x}, \vec{y} \neq \vec{0} $.
	    The angle $ \angle_{\bf -\delta} (\vec{x}, \vec{y})$ exists.  Hence we have the inequality \      
	    $ \left| \frac{1}{4} \cdot  { \bf \Delta}  \right| \cdot 
	    \left(  \frac{1}{4} \cdot { \bf \Sigma} \right) ^{\bf -\delta}  \leq 1 $.  \  
	           
	  As above we distinguish two cases.  The first case is \ $\frac{1}{4} \cdot {\bf \Sigma} \geq 1 $. We have                  $$ 1 \geq \frac{\left| {\bf \Delta}  \right|}{ {\bf \Sigma}} = 
	     \frac{\left| \frac{1}{4} \cdot {\bf \Delta}  \right|}{ \frac{1}{4} \cdot { \bf \Sigma}} 
	     \geq \frac{\left| \frac{1}{4} \cdot {\bf \Delta}  \right|}
	      { \left( \frac{1}{4} \cdot { \bf \Sigma} \right)^{\gamma} } 
	      \geq \frac{\left| \frac{1}{4} \cdot {\bf \Delta}  \right|}
	      { \left( \frac{1}{4} \cdot { \bf \Sigma} \right)^{\delta} } 
	      =  \left| \frac{1}{4} \cdot  { \bf \Delta}  \right| \cdot 
	      \left( \frac{1}{4} \cdot { \bf \Sigma} \right) ^{\bf -\delta} \ . $$     
	    We get the inequality \   $ \left| \frac{1}{4} \cdot  { \bf \Delta}  \right| \cdot 
	    \left(  \frac{1}{4} \cdot { \bf \Sigma} \right) ^{\bf -\gamma}  \leq 1 $.  \   
	     It follows that  the angle $ \angle_{\bf -\gamma} (\vec{x}, \vec{y})$ exists.  \\
	      The second case is \ $\frac{1}{4} \cdot {\bf \Sigma} < 1 $. We get 
	     $$ 0 \leq  \left( \frac{1}{4} \cdot { \bf \Sigma} \right) ^{\bf \delta} 
	          \leq  \left( \frac{1}{4} \cdot { \bf \Sigma} \right) ^{\bf \gamma} 
	          \leq     \frac{1}{4} \cdot { \bf \Sigma}      \ , \ \text{ hence } \  
	          \frac{\left| \frac{1}{4} \cdot {\bf \Delta}  \right|}
	           { \left( \frac{1}{4} \cdot { \bf \Sigma} \right)^{\gamma} } 
	          \leq    \frac{\left| \frac{1}{4} \cdot {\bf \Delta}  \right|}
	            { \left( \frac{1}{4} \cdot { \bf \Sigma} \right) ^{\delta} } \leq 1 \ ,          $$  
	  and the angle $ \angle_{\bf -\gamma} (\vec{x}, \vec{y})$ exists.  
	  This proves  $(X , \| \cdot \|) \in \mathsf{pdBW}_{\bf -\gamma}$, and Theorem \ref{erstes theorem} is shown. 
 	\end{proof}
	\begin{corollary}  
	  	$$  \text{ We have} \ \ \mathsf{NORM}  =  \mathsf{NORM}_{\bf \varrho} \ \ \text{for all real numbers} 
	                    \ {\bf \varrho} \ \text{from the closed interval } \ [-1 , 1 ] \ .   $$
	\end{corollary}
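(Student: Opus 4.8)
The plan is to establish the two inclusions separately. One direction, $\mathsf{NORM}_{\bf \varrho} \subseteq \mathsf{NORM}$, holds trivially by Definition \ref{Definition  vier}, since every member of $\mathsf{NORM}_{\bf \varrho}$ is already a normed space. All the content therefore lies in the reverse inclusion $\mathsf{NORM} \subseteq \mathsf{NORM}_{\bf \varrho}$ for each fixed ${\bf \varrho} \in [-1,1]$; that is, I would show that every real normed space has the angle $\angle_{\bf \varrho}$.

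To do so I would fix a normed space $(X,\| \cdot \|)$ and two vectors $\vec{x}, \vec{y} \neq \vec{0}$. By Definition \ref{Definition drei} the $\varrho$-angle $\angle_{\bf \varrho}(\vec{x},\vec{y})$ is defined exactly when
$$ f({\bf \varrho}) \ := \ \left| \tfrac{1}{4} \cdot {\bf \Delta} \right| \cdot \left( \tfrac{1}{4} \cdot {\bf \Sigma} \right)^{\bf \varrho} \ \leq \ 1 \, , $$
where ${\bf \Sigma} > 0$ because $(X,\| \cdot \|)$ is positive definite. The decisive observation is that, with the vectors held fixed, $f$ depends on ${\bf \varrho}$ only through the factor $\left( \tfrac{1}{4} \cdot {\bf \Sigma} \right)^{\bf \varrho} = e^{{\bf \varrho} \cdot \ln(\frac{1}{4} {\bf \Sigma})}$, so $f$ is a monotone function of ${\bf \varrho}$ on all of $\mathbbm{R}$ (increasing if $\tfrac14 {\bf \Sigma} > 1$, decreasing if $\tfrac14 {\bf \Sigma} < 1$, constant if $\tfrac14 {\bf \Sigma} = 1$). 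Hence on the closed interval $[-1,1]$ its maximum is attained at an endpoint, and therefore $f({\bf \varrho}) \leq \max\{ f(-1), f(1) \}$ for every ${\bf \varrho} \in [-1,1]$.

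It then remains only to control the two endpoints, which is already done. By Proposition \ref{corollary eins} (resp. Proposition \ref{proposition1}) we have $\mathsf{NORM} = \mathsf{NORM}_{\bf -1} = \mathsf{NORM}_{\bf 1}$, so $(X,\| \cdot \|)$ lies in both $\mathsf{NORM}_{\bf -1}$ and $\mathsf{NORM}_{\bf 1}$; equivalently the {\sf CSB} inequality holds at the exponents $-1$ and $1$, i.e. $f(-1) \leq 1$ and $f(1) \leq 1$. Combining this with the monotonicity bound gives $f({\bf \varrho}) \leq 1$ for all ${\bf \varrho} \in [-1,1]$ and all $\vec{x}, \vec{y} \neq \vec{0}$. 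Thus the triple $\left( X, \| \cdot \|, < . \: | \: . >_{\bf \varrho} \right)$ satisfies the {\sf CSB} inequality, so $(X,\| \cdot \|)$ has the angle $\angle_{\bf \varrho}$ and lies in $\mathsf{NORM}_{\bf \varrho}$. With the trivial reverse inclusion this yields $\mathsf{NORM} = \mathsf{NORM}_{\bf \varrho}$.

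I expect no serious obstacle: the single idea that makes the statement collapse is the recognition that, for fixed vectors, the {\sf CSB} expression is log-linear — hence monotone — in the exponent ${\bf \varrho}$, which reduces an interval's worth of inequalities to the two endpoint cases. The only point requiring a moment's care is that this monotonicity argument must be run for each pair $(\vec{x},\vec{y})$ separately; but since the endpoint bounds $f(\pm 1) \leq 1$ are themselves valid for \emph{all} nonzero pairs, nothing is lost, and the conclusion holds uniformly.
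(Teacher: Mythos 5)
Your proposal is correct and is essentially the paper's own argument: the paper proves this corollary by combining Proposition \ref{corollary eins} (the endpoint cases ${\bf \varrho} = -1, 0, 1$, which rest on the triangle inequality) with Theorem \ref{erstes theorem}, whose two-case proof (according to $\frac{1}{4} \cdot {\bf \Sigma} \geq 1$ or $\frac{1}{4} \cdot {\bf \Sigma} < 1$) is precisely your observation that $\left( \frac{1}{4} \cdot {\bf \Sigma} \right)^{\bf \varrho}$ is monotone in the exponent, so that the {\sf CSB} inequality on all of $[-1,1]$ reduces to the two endpoints. You merely streamline this by phrasing the monotonicity as log-linearity and taking the maximum at an endpoint, rather than routing through the inclusion chain of Theorem \ref{erstes theorem}.
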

  \begin{proof} \quad See both the above Proposition  \ref{corollary eins}  and  Theorem \ref{erstes theorem}.  
	\end{proof}  
	\begin{corollary} \label{zweites Korollar}
	Let us take four positive numbers \ $\alpha, \beta, \gamma , \delta $ \	with \\ 
	                \centerline{   $ -\delta < -\gamma < -1 <  1 < \alpha <  \beta $.} 
	$$  \text{ We have }  \qquad 
	                \mathsf{NORM}_{\bf -\delta} \subset   \mathsf{NORM}_{\bf -\gamma} \subset    \mathsf{NORM}
	                \supset  \mathsf{NORM}_{\bf \alpha} \supset  \mathsf{NORM}_{\bf \beta} \ .   $$                
	\end{corollary}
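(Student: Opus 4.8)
The plan is to deduce all four inclusions from the chain for $\mathsf{pdBW}$ already established in Theorem \ref{erstes theorem} by intersecting that chain with the class $\mathsf{NORM}$. The bridge I would record first is the elementary identity
$$ \mathsf{NORM}_{\bf \varrho} \ = \ \mathsf{NORM} \cap \mathsf{pdBW}_{\bf \varrho} \qquad \text{for every } \ {\bf \varrho} \in \mathbbm{R} \, . $$
This is purely definitional: a normed space is in particular a positive definite $\mathsf{BW}$ space, so $\mathsf{NORM} \subset \mathsf{pdBW}$, and the clause ``$(X,\|\cdot\|)$ has the angle $\angle_{\bf \varrho}$'' appearing in the definitions of $\mathsf{NORM}_{\bf \varrho}$ and of $\mathsf{pdBW}_{\bf \varrho}$ is the very same requirement, namely the {\sf CSB} inequality for $<\!.\,|\,.\!>_{\bf \varrho}$. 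Hence a normed space lies in $\mathsf{NORM}_{\bf \varrho}$ if and only if it lies in $\mathsf{pdBW}_{\bf \varrho}$, which is the asserted identity.

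Granting this, I would observe that the hypothesis $-\delta < -\gamma < -1 < 1 < \alpha < \beta$ of the corollary in particular gives $-\delta < -\gamma < -1 < \alpha < \beta$, which is exactly the hypothesis of Theorem \ref{erstes theorem} for the same four numbers (since $1 < \alpha < \beta$ forces $-1 < \alpha < \beta$). Thus Theorem \ref{erstes theorem} supplies
$$ \mathsf{pdBW}_{\bf -\delta} \subset \mathsf{pdBW}_{\bf -\gamma} \subset \mathsf{pdBW} \supset \mathsf{pdBW}_{\bf \alpha} \supset \mathsf{pdBW}_{\bf \beta} \, . $$
Intersecting each of these inclusions with $\mathsf{NORM}$ preserves the inclusions, and by the identity above it turns $\mathsf{pdBW}_{\bf -\delta} \subset \mathsf{pdBW}_{\bf -\gamma}$ into $\mathsf{NORM}_{\bf -\delta} \subset \mathsf{NORM}_{\bf -\gamma}$, and $\mathsf{pdBW}_{\bf \beta} \subset \mathsf{pdBW}_{\bf \alpha}$ into $\mathsf{NORM}_{\bf \beta} \subset \mathsf{NORM}_{\bf \alpha}$. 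The two central inclusions $\mathsf{NORM}_{\bf -\gamma} \subset \mathsf{NORM}$ and $\mathsf{NORM}_{\bf \alpha} \subset \mathsf{NORM}$ are immediate, since every $\mathsf{NORM}_{\bf \varrho}$ is by construction a subclass of $\mathsf{NORM}$.

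I expect no genuine obstacle here: the mathematical content is entirely carried by Theorem \ref{erstes theorem}, whose monotonicity-in-$\varrho$ argument compares $\left(\frac{1}{4}\cdot{\bf \Sigma}\right)^{\bf \varrho}$ for different exponents using $0 \leq \left|\frac{1}{4}\cdot{\bf \Delta}\right| \leq \frac{1}{4}\cdot{\bf \Sigma}$ and $\frac{1}{4}\cdot{\bf \Sigma} > 0$; this argument is about the $\mathsf{BW}$ data $({\bf \Sigma},{\bf \Delta})$ and so transports to the normed setting verbatim through the identity $\mathsf{NORM}_{\bf \varrho} = \mathsf{NORM} \cap \mathsf{pdBW}_{\bf \varrho}$. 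The only point needing care is to state that identity explicitly rather than re-running the inequalities. I would also remark that the strict requirement $\alpha > 1$ in the hypothesis is not needed for the inclusions themselves; it merely places $\alpha$ outside the interval $[-1,1]$, on which $\mathsf{NORM}_{\bf \varrho} = \mathsf{NORM}$ by Proposition \ref{corollary eins}, so that the displayed chain is the nondegenerate one.
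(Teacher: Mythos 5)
Your proposal is correct and takes essentially the same route as the paper: the paper's entire proof is the remark that the corollary ``follows directly from Theorem \ref{erstes theorem}'', and the implicit content of that remark is precisely your bridge identity $\mathsf{NORM}_{\bf \varrho} = \mathsf{NORM} \cap \mathsf{pdBW}_{\bf \varrho}$, so that intersecting the $\mathsf{pdBW}$ chain with $\mathsf{NORM}$ yields the claim. Your write-up simply makes explicit the definitional step the paper leaves unstated.
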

	\begin{proof}  \quad This follows directly from  Theorem \ref{erstes theorem}. 
  \end{proof}
 	\begin{theorem}   \label{zweites theorem}       
	             We have the equality 
	             $$ \mathsf{IPspace} \ = \ \bigcap_{{\bf \varrho} \in \mathbbm{R}} \ \mathsf{NORM}_{\bf \varrho} \ . $$
	\end{theorem}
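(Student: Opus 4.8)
The plan is to prove the two inclusions of $\mathsf{IPspace}=\bigcap_{{\bf\varrho}\in\mathbbm{R}}\mathsf{NORM}_{\bf\varrho}$ separately, the inclusion $\subseteq$ being immediate and the reverse one carrying all the content. If $(X,\|\cdot\|)$ is an inner product space it is in particular a normed space, and by Proposition \ref{proposition eins} the triple $(X,\|\cdot\|,<.\,|\,.>_{\bf\varrho})$ fulfils the {\sf CSB} inequality for \emph{every} real ${\bf\varrho}$; hence $(X,\|\cdot\|)$ has each angle $\angle_{\bf\varrho}$ and lies in the intersection. For the reverse inclusion I assume that $(X,\|\cdot\|)\in\mathsf{NORM}_{\bf\varrho}$ for all ${\bf\varrho}\in\mathbbm{R}$ and aim to show that the parallelogram identity $\widehat{(4)}$ holds, so that $(X,\|\cdot\|)$ is an ${\mathsf{IP}}$ space. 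Since $\widehat{(4)}$ may be tested on the two-dimensional subspaces of $X$, I may also assume $\dim X=2$.

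Fix unit vectors $\vec x,\vec y$ and use the abbreviations ${\bf\Delta},{\bf\Sigma}$ introduced after Definition \ref{naechste Definition}; recall $0\leq|{\bf\Delta}|\leq{\bf\Sigma}$ and ${\bf\Sigma}>0$. By Definition \ref{Definition drei}, membership in $\mathsf{NORM}_{\bf\varrho}$ gives $\left|\frac14{\bf\Delta}\right|\cdot\left(\frac14{\bf\Sigma}\right)^{\bf\varrho}\leq1$, and this now holds simultaneously for \emph{all} ${\bf\varrho}\in\mathbbm{R}$. The first step is to extract a pointwise dichotomy: if $\frac14{\bf\Sigma}\neq1$, then letting ${\bf\varrho}\to+\infty$ (in case $\frac14{\bf\Sigma}>1$) or ${\bf\varrho}\to-\infty$ (in case $\frac14{\bf\Sigma}<1$) makes the factor $\left(\frac14{\bf\Sigma}\right)^{\bf\varrho}$ tend to $+\infty$, so the bound can survive only if ${\bf\Delta}=0$. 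Thus every pair of nonzero vectors satisfies ${\bf\Sigma}=4$ \emph{or} ${\bf\Delta}=0$.

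Next I upgrade this to ${\bf\Sigma}\equiv4$, which is exactly $\widehat{(4)}$ for unit vectors and hence the parallelogram identity. Both ${\bf\Sigma}$ and ${\bf\Delta}$ are continuous on $(X\setminus\{\vec0\})^2$. The set $V:=\{{\bf\Sigma}\neq4\}$ is therefore open, and by the dichotomy $V\subseteq E$, where $E:=\{{\bf\Delta}=0\}$ is the closed ``isosceles'' set $\{\,\|\frac{\vec x}{\|\vec x\|}+\frac{\vec y}{\|\vec y\|}\|=\|\frac{\vec x}{\|\vec x\|}-\frac{\vec y}{\|\vec y\|}\|\,\}$. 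Consequently it suffices to show that $E$ has empty interior: then the open set $V\subseteq E$ must be empty, ${\bf\Sigma}\equiv4$, and $(X,\|\cdot\|)$ is an inner product space. Equivalently, $U:=\{{\bf\Delta}\neq0\}$ is then dense, it is nonempty since ${\bf\Delta}(\vec x,\vec x)=4$, one has ${\bf\Sigma}=4$ on $U$ by the dichotomy, and continuity propagates this to all of $(X\setminus\{\vec0\})^2$.

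The main obstacle is precisely this emptiness of the interior of $E$. Suppose it failed; fixing one argument $\vec x=\vec u$ one would obtain an open arc $W$ of unit vectors $\vec v$ with $\|\vec u+\vec v\|=\|\vec u-\vec v\|$ throughout $W$ — an arc of the unit sphere lying inside the metric bisector of $\vec u$ and $-\vec u$ — and, since $V\subseteq E$ is open, one may take this arc inside $V$, so that the common value is $\neq\sqrt2$. I must show that a genuine norm admits no such arc. The route I expect to carry this out is differential: by Mazur's theorem I can choose $\vec v_0$ in the open arc at which the norm is G\^ateaux differentiable at both $\vec u+\vec v_0$ and $\vec u-\vec v_0$, with norming functionals $f$ and $g$; differentiating the identity $\|\vec u+\vec v\|-\|\vec u-\vec v\|\equiv0$ along $W$ forces $f+g$ to annihilate the tangent line of the sphere at $\vec v_0$, and comparing this condition along the whole arc — together with $f(\vec u+\vec v_0)=\|\vec u+\vec v_0\|$, $g(\vec u-\vec v_0)=\|\vec u-\vec v_0\|$ and the extra information ${\bf\Sigma}\neq4$ on $W$ — should produce a contradiction with the convexity of the unit ball. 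Establishing this rigidity of isosceles orthogonality is where the real work lies; once it is in hand, the preceding paragraphs complete the proof.
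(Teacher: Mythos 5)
Your first two steps are sound and coincide with the paper's own argument: the inclusion $\subseteq$ is exactly Proposition \ref{proposition eins}, and your pointwise dichotomy --- for every pair of nonzero vectors either ${\bf \Sigma}=4$ or ${\bf \Delta}=0$, since otherwise $\left|\frac{1}{4}{\bf \Delta}\right|\cdot\left(\frac{1}{4}{\bf \Sigma}\right)^{\bf \varrho}$ exceeds $1$ for ${\bf \varrho}$ large positive (when $\frac{1}{4}{\bf \Sigma}>1$) or large negative (when $\frac{1}{4}{\bf \Sigma}<1$) --- is precisely the paper's Case A read contrapositively. Your topological reformulation, namely that the open set $V=\{{\bf \Sigma}\neq 4\}$ is contained in the closed set $E=\{{\bf \Delta}=0\}$, so that everything reduces to showing $E$ has empty interior, is also a correct reduction.

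However, the proposal stops exactly where the content lies: you never prove that a normed plane admits no open arc of unit vectors $\vec{v}$ with $\|\vec{u}+\vec{v}\|=\|\vec{u}-\vec{v}\|$ (with common value $\neq\sqrt{2}$); you explicitly defer it (``where the real work lies''), and the sketch offered is not a proof. Mazur's theorem gives only a dense set of differentiability points, which does not by itself license differentiating the identity $\|\vec{u}+\vec{v}\|-\|\vec{u}-\vec{v}\|\equiv 0$ along the arc, and the concluding ``should produce a contradiction with convexity'' is never derived; note that convexity alone cannot suffice as stated, since two spheres of a normed plane with distinct centers can share an entire segment (take two translates of the $\|\cdot\|_{\infty}$ square). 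This missing statement is exactly what the paper's Case B supplies, by a different and more elementary device: given unit vectors $\vec{v},\vec{w}$ with ${\bf \Delta}=0$ but ${\bf \Sigma}\neq 4$, the paper perturbs $\vec{w}$ to $\widetilde{w}=(\vec{w}+\widetilde{t}\cdot\vec{v})/\|\vec{w}+\widetilde{t}\cdot\vec{v}\|$ and invokes the fact, quoted from \cite{Thuerey}, that $t\mapsto\underline{E}(t)=\frac{1}{4}{\bf \Delta}(\vec{v},\widetilde{w})$ is a homeomorphism of $\mathbbm{R}$ onto $(-1,1)$; injectivity gives ${\bf \Delta}\neq 0$ for every small $\widetilde{t}\neq 0$, continuity keeps ${\bf \Sigma}\neq 4$, and then Case A finishes. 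What you need is precisely this monotonicity statement, i.e.\ the uniqueness of isosceles orthogonality (essentially a classical result going back to James); until you prove it or cite it, your argument is incomplete at its crucial step.
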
 
	\begin{proof} \quad      $" \subset "$: This is trivial with Proposition \ref{proposition eins}.   \\
	   $ \ {  }  $  \qquad \qquad    $" \supset "$: This is not trivial, but easy.  We show that a real normed space  
	    $ ( X ,  \| \cdot \| )$ which in not an inner product space is not an element of   
	   $  \mathsf{NORM}_{\bf \varrho}$ for at least one real number \ $\bf \varrho$. \ 
	             
	    Let   $ ( X ,  \| \cdot \| )$ be a real normed space which in not an inner product space. Then it  
	    must exist a  two dimensional subspace $ \mathsf{U} $ of $ X $ such that its unit sphere \ 
	    $ {\bf S} \cap \mathsf{U} $ \  is not an ellipse.  \ Hence there are two unit vectors \ 
	    $ \vec{v} , \vec{w} \in  {\bf S} \cap \mathsf{U} $ such that the parallelegram identity is not fulfilled; 
	    i.e. it holds  
	 $$ \|\vec{v}+\vec{w}\|^{2} + \|\vec{v}-\vec{w}\|^{2} \neq 4
	                                = 2 \cdot \left[\|\vec{v}\|^{2} + \|\vec{w}\|^{2} \right] \ . $$
	                                                  
	 \underline{(Case A)}: 
	 First we assume \ $\|\vec{v}+\vec{w}\| \neq \|\vec{v}-\vec{w}\|$, \ hence \ 
	    $ { \bf \Delta} :=  { \bf \Delta} (\vec{v},\vec{w}) \neq 0$. 
	    In the case of  $ \|\vec{v}+\vec{w}\|^{2} + \|\vec{v}-\vec{w}\|^{2} > 4 $, i.e. \
	    $ \frac{1}{4} \cdot {\bf \Sigma} > 1$,  we can choose a very big number   $ \beta $ \ such that 
	       $$ \left| \ \|\vec{v}\| \cdot \|\vec{w}\| \cdot \frac{1}{4} \cdot  { \bf \Delta} \cdot
     \left\langle  \frac{1}{4} \cdot {\bf \Sigma} \right\rangle  ^{\bf \beta} \ \right|
      > \|\vec{v}\| \cdot \|\vec{w}\| = 1 \ ,   $$     
      and if \  $ |\vec{v}+\vec{w}\|^{2} + \|\vec{v}-\vec{w}\|^{2} < 4 $, i.e. $ \frac{1}{4} \cdot {\bf \Sigma} < 1$,
      we can find a big $\gamma$ such that
      $$ \left| \ \|\vec{v}\| \cdot \|\vec{w}\| \cdot \frac{1}{4} \cdot  { \bf \Delta} \cdot
     \left\langle  \frac{1}{4} \cdot {\bf \Sigma} \right\rangle  ^{-\bf \gamma} \ \right|
      > \|\vec{v}\| \cdot \|\vec{w}\| = 1 \ .   $$ 
      We get that the angle \ $\angle_{\bf \beta} (\vec{v}, \vec{w})$ \ or \
      $ \angle_{\bf -\gamma} (\vec{v}, \vec{w})$, respectively,  does not exist.  
                
   \underline{(Case B)}: 
   If  we have \ $\|\vec{v}+\vec{w}\| = \|\vec{v}-\vec{w}\|$, \ hence \ $ { \bf \Delta} = 0$,
      we have to replace  \ $ \vec{w} $ \ by another unit vector  \ $ \widetilde{w}$. Note that 
      $ \{ \vec{v}, \vec{w} \} $ 
      is linear independent since $ \|\vec{v}+\vec{w}\|^{2} + \|\vec{v}-\vec{w}\|^{2} \neq 4$. \
      We regard  the continuous map  \ $ \underline{E} :  \mathbbm{R} \longrightarrow  (-1,+1)  $, \   we define 
      $$  \underline{E} (t) \ := \
                         \frac{1}{4} \cdot  \left[  \ \left\| \vec{v} + \frac{\vec{w} + t \cdot \vec{v}}{\|\vec{w} + 
                         t \cdot \vec{v}\|} \right\|^{2} \ - \ \left\| \vec{v}  -  \frac{\vec{w} + 
                         t \cdot \vec{v}}{\|\vec{w} + t \cdot \vec{v}\|} \right\|^{2} \  \right]  \ .   $$                    For \ $t =0$ \ we  get \ 
           $  \underline{E}(0) = \frac{1}{4} \cdot \left[ \ \left\| \vec{v}  +  \vec{w} \right\|^{2} \ - 
           \ \left\| \vec{v} -  \vec{w} \right\|^{2} \  \right]  =  \frac{1}{4} \cdot  { \bf \Delta} = 0 $ .   \ \            In   { \bf \cite{Thuerey}} \ on the internet platform `arXiv' it is proven that the map   $ \underline{E} $ 
           yields a homeomorphism from $  \mathbbm{R} $ onto the open interval $ ( -1, 1 ) $. \        
      Hence  we can replace the factor \ $t=0$ \ by any  $ \widetilde{t} \neq 0$ such that 
  $$    \underline{E} (\widetilde{t}) \ = \  \frac{1}{4} \cdot \left[  \ \left\| \vec{v}  +  
        \frac{\vec{w} + \widetilde{t} \cdot \vec{v}}{\|\vec{w} +   \widetilde{t} \cdot \vec{v}\|} \right\|^{2} \ - \
        \left\| \vec{v}  -  \frac{\vec{w} +  \widetilde{t} \cdot \vec{v}}{\|\vec{w} + 
        \widetilde{t} \cdot \vec{v}\|} \right\|^{2} \ \right] \ \neq \ 0 \ . $$    
      For each  \ $ \widetilde{t}$ \ we abbreviate   the unit vector 
  $$  \widetilde{w} := \frac{\vec{w} + \widetilde{t} \cdot \vec{v}}{\|\vec{w} + \widetilde{t} \cdot \vec{v}\|} \ , $$ 
      and since  $ \underline{E} $ is a homeomorphism   we can choose a very small  $ \widetilde{t} \neq 0$ such
      that   still holds  
      $ \|  \vec{v} +  \widetilde{w} \|^{2} + \|  \vec{v} -  \widetilde{w} \|^{2} \neq 4 $, but
      $ { \bf \Delta} :=  { \bf \Delta} (  \vec{v} ,  \widetilde{w} ) \neq 0$.  \
      At this point we can continue as in (Case A).   
                
	 In both cases (Case A)  and (Case B) it follows that \  $ ( X ,  \| \cdot \| )$
	     is not an element of the classes \ $ \mathsf{NORM}_{\bf \beta}$ or $ \mathsf{NORM}_{-\bf \gamma}$, respectively.
	     Now the proof of  Theorem \ref{zweites theorem} is finished.  
	\end{proof}
      $ $        \\  \\  
     We  define a function  $\Upsilon $  which maps every real positive definite   $ \mathsf{BW}$ space
     to a pair of extended numbers  $ ( \nu , \mu )$, \\
     \centerline {
      $ \Upsilon:  \mathsf{pdBW} \longrightarrow \left[ -\infty, -1 \right] \times \left[ -1 , +\infty \right] $. } \\
   \begin{definition}   \rm   \label{definition fuenf}
        Let  $ ( X ,  \| \cdot \| )$ \ be a positive definite balancedly weighted vector space. We define 
     \begin{align*}         
          \nu & \ := \ \inf \{ \kappa \in  \mathbbm{R} \ | \  ( X ,  \| \cdot \| ) \text 
       { has the angle }  \angle_{\bf \kappa}\}, \\ 
         \mu  & \ := \ \sup \{ \kappa \in  \mathbbm{R} \ | \  ( X ,  \| \cdot \| ) \text 
       { has the angle }  \angle_{\bf \kappa}\}, \\ 
        \Upsilon( X ,  \| \cdot \| ) & \ := \ ( \nu, \mu ) .
     \end{align*} 
   \end{definition} 
       With Theorem  \ref{erstes theorem} we get that  $ \nu  $ is from the interval \ 
       $ \left[ -\infty , -1 \right] $ and  $ \mu $ is from the interval \ $  \left[ -1 , +\infty \right] $.  \
       If  $ ( X ,  \| \cdot \| )$   is even a normed vector space we have \
       $  \mu \in  \left[ +1, +\infty \right] $.  \ 
       If $ ( X ,  \| \cdot \| )$   is even an  inner product space it follows from  Theorem  \ref{zweites theorem}
       the identity \  $  \Upsilon( X ,  \| \cdot \| )  = ( -\infty , + \infty ) $.
  \begin{proposition} 
         Let  $ ( X ,  \| \cdot \| ) \in  \mathsf{pdBW} $, i.e. \ $( X ,  \| \cdot \| )$  is a positive definite
         balancedly  weighted vector space. \  We defined \  $\Upsilon( X ,  \| \cdot \| ) = ( \nu, \mu )$. 
         Let us  assume $\nu  \neq -\infty$ and $ \mu \neq \infty $.   \ We claim that
         the infimum and the supremum will be attained, i.e. we claim 
     \begin{align*}    
          \nu =  \min \{ \kappa \in  \mathbbm{R} \ | \  ( X ,  \| \cdot \| ) \text
         { has the angle }  \angle_{\bf \kappa}\}, \ \
          \mu = \max \{ \kappa \in  \mathbbm{R} \ | \  ( X ,  \| \cdot \| ) \text 
         { has the angle }  \angle_{\bf \kappa}\}.  
      \end{align*}    
  \end{proposition}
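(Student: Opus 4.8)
The plan is to show directly that the finite numbers $\nu = \inf S$ and $\mu = \sup S$ both lie in the set
$$ S \ := \ \{ \kappa \in \mathbbm{R} \mid (X,\|\cdot\|) \text{ has the angle } \angle_{\bf \kappa} \} \, ; $$
once this is established, the identities $\nu = \inf S$, $\mu = \sup S$ immediately upgrade to $\nu = \min S$ and $\mu = \max S$, which is exactly the assertion. (By Theorem~\ref{erstes theorem} the set $S$ is moreover an interval, but this is not needed for the argument.) I would treat the two endpoints separately by the same device.

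The central observation is that, by Definition~\ref{Definition drei}, the space has the angle $\angle_{\bf \kappa}$ if and only if
$$ \left| \tfrac{1}{4} \cdot { \bf \Delta}(\vec{x},\vec{y}) \right| \cdot \left( \tfrac{1}{4} \cdot { \bf \Sigma}(\vec{x},\vec{y}) \right)^{\bf \kappa} \ \leq \ 1 \qquad \text{for all } \ \vec{x}, \vec{y} \neq \vec{0} \, . $$
Inside this quantifier the pair $(\vec{x},\vec{y})$ is fixed, so ${ \bf \Delta}$ and ${ \bf \Sigma}$ are fixed non-negative numbers, and in fact $\tfrac{1}{4} \cdot { \bf \Sigma} > 0$ strictly, as recorded after Definition~\ref{naechste Definition}. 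Hence for each fixed admissible pair the function
$$ g_{\vec{x},\vec{y}}(\kappa) \ := \ \left| \tfrac{1}{4} \cdot { \bf \Delta} \right| \cdot \left( \tfrac{1}{4} \cdot { \bf \Sigma} \right)^{\kappa} \ = \ \left| \tfrac{1}{4} \cdot { \bf \Delta} \right| \cdot \exp\!\left( \kappa \cdot \ln \tfrac{1}{4} { \bf \Sigma} \right) $$
is continuous in $\kappa$ on all of $\mathbbm{R}$ (whether the base is less than, equal to, or greater than $1$).

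To prove $\mu \in S$, I would pick a sequence $\kappa_n \in S$ with $\kappa_n \to \mu$; such a sequence exists because $S$ is nonempty (it contains $-1$ by Proposition~\ref{corollary eins}) and $\mu = \sup S < \infty$ by hypothesis. Now fix an arbitrary pair $\vec{x}, \vec{y} \neq \vec{0}$. Since each $\kappa_n \in S$ we have $g_{\vec{x},\vec{y}}(\kappa_n) \leq 1$ for every $n$, and letting $n \to \infty$ the continuity of $g_{\vec{x},\vec{y}}$ yields $g_{\vec{x},\vec{y}}(\mu) = \lim_{n} g_{\vec{x},\vec{y}}(\kappa_n) \leq 1$. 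As the pair was arbitrary, the displayed {\sf CSB} condition holds at $\kappa = \mu$, so $(X,\|\cdot\|)$ has the angle $\angle_{\bf \mu}$ and $\mu \in S$, giving $\mu = \max S$. The argument for $\nu$ is verbatim the same, using a sequence $\kappa_n \in S$ with $\kappa_n \to \nu$ (available since $\nu = \inf S > -\infty$) and the identical pointwise passage to the limit.

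There is no genuinely hard step here: the statement is a soft closedness fact about $S$. The only points demanding care are that $S$ be nonempty so that convergent sequences to the finite endpoints actually exist (guaranteed by $-1 \in S$), and that $\tfrac{1}{4} \cdot { \bf \Sigma}$ be strictly positive so that $\kappa \mapsto (\tfrac{1}{4}{ \bf \Sigma})^{\kappa}$ is continuous across all of $\mathbbm{R}$. Crucially, the limit is taken pointwise in the pair $(\vec{x},\vec{y})$, so no uniformity over the pairs is required; this is what makes the otherwise-delicate interchange of ``$\sup$ over pairs'' and ``limit in $\kappa$'' harmless.
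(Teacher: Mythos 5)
Your proof is correct. Both your argument and the paper's rest on one and the same observation: for each fixed pair $\vec{x},\vec{y}\neq\vec{0}$ the quantity $g_{\vec{x},\vec{y}}(\kappa)=\left|\frac{1}{4}{\bf \Delta}\right|\cdot\left(\frac{1}{4}{\bf \Sigma}\right)^{\kappa}$ is continuous in the exponent $\kappa$, since ${\bf \Sigma}>0$ in a positive definite space. What differs is the packaging. You argue directly: the admissible set $S$ is an intersection, over all pairs, of the closed sets $\{\kappa\in\mathbbm{R} \mid g_{\vec{x},\vec{y}}(\kappa)\leq 1\}$, hence closed, and a nonempty closed set with a finite supremum (infimum) contains it; this treats $\nu$ and $\mu$ symmetrically in one stroke and needs nothing beyond $-1\in S$. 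The paper instead argues by contradiction at the infimum only: if $\angle_{\bf \nu}$ fails to exist, then $\nu<-1$ (using $\mathsf{pdBW}=\mathsf{pdBW}_{\bf -1}$), some violating pair of unit vectors satisfies $g(\nu)=1+\varepsilon$, which forces ${\bf \Sigma}<4$, and continuity (together with the resulting monotonicity of $g$) propagates the violation to a whole interval $[\nu,\nu+\overline{\eta}]$, contradicting that $\nu$ is the infimum; the statement for $\mu$ is not written out. The two routes are contrapositives of each other --- your closedness of $S$ versus the paper's openness of its complement near the endpoint --- but yours is leaner: it dispenses with the auxiliary facts ($\nu<-1$, ${\bf \Sigma}<4$, the $\varepsilon$--$\overline{\eta}$--$\overline{\lambda}$ bookkeeping) and it covers both endpoints at once. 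Both proofs share the decisive feature you flag explicitly: the limit in $\kappa$ is taken pointwise in the pair $(\vec{x},\vec{y})$, so no uniformity over pairs is ever needed.
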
   
  \begin{proof}
      We show the first claim \\
      \centerline{     $ \nu = \min \{ \kappa \in  \mathbbm{R} \ | \  ( X ,  \| \cdot \| ) \text { has the angle } 
                                                      \angle_{\bf \kappa} \}$.           }   \\
       Let us assume the opposite, i.e. we assume that the angle   $\angle_{\bf \nu}$ does not exist in
       $( X , \| \cdot \| )$. 
       This means $ \nu < -1 $, since \ $ \mathsf{pdBW}  = \mathsf{pdBW}_{\bf -1}$. \
       Hence there are two unit vectors $\vec{v}, \vec{w} \in X $ such that 
    \begin{equation}   \label{equation  soundsoviel}
           |< \vec{v} \: | \:  \vec{w} >_{\bf \nu}|  \ \ = \  
           \frac{1}{4} \cdot | { \bf \Delta}(\vec{v}, \vec{w})| \cdot \left\langle  \frac{1}{4} \cdot 
           { \bf \Sigma}(\vec{v}, \vec{w})  \right\rangle  ^{\bf \nu}  \ = \ 1 + \varepsilon  \ \ \ 
           \text{  for a positive } \  \varepsilon   \ .                             
    \end{equation}       
     We make  the exponent $\nu$ `less negative'. Since \   $\nu < -1$ and
     $ 0 \leq  | { \bf \Delta}| \leq { \bf \Sigma} $ \ it has to  be $ { \bf \Sigma} < 4 $. \ 
     Since $ \mathsf{pdBW}  = \mathsf{pdBW}_{\bf -1}$  we have $ |< \vec{v} \: | \:  \vec{w} >_{-1}| \leq 1  $.  \   
     By the continuity of the left hand side  of the above Equation  (\ref{equation  soundsoviel})
     we can find two positive numbers \
     $ \overline{\eta}, \overline{\lambda} $ \ with \ $ \nu  < \nu + \overline{\eta} < -1 $ \ and \ 
     $ 0 < \overline{\lambda} < \varepsilon $ \ such that 
    \begin{equation}   \label{equation  soundsovielundeins} 
            1 \ < \  |< \vec{v} \: | \:  \vec{w} >_{\bf \nu + \overline{\eta}}|  \ \ = \  \frac{1}{4} \cdot | 
            { \bf \Delta}(\vec{v}, \vec{w})| \cdot \left\langle  \frac{1}{4} \cdot { \bf \Sigma}(\vec{v}, \vec{w}) 
            \right\rangle  ^{\bf \nu + \overline{\eta}}  \ = \ 1 + \overline{\lambda} < \ 1 + \varepsilon  \; . 
     \end{equation}        
      By the continuity of the product \ $ < . \: | \:  . > $ \ we can choose the positive number 
      $\overline{\eta}$  such that         
  $$  1 \ < \ 1 + \overline{\lambda} \ \leq \ 1 + \lambda \ = \ 
      |< \vec{v} \: | \:  \vec{w} >_{\bf \nu + \eta}| \ \leq \ \ 1 + \varepsilon  \;    $$
      holds for all $ \eta$, for \ $ 0 \leq  \eta \leq  \overline{\eta} $ \ for positive  $ \lambda $ with 
      $  \overline{\lambda} \leq \lambda \leq  \varepsilon  $.  \ 
      We conclude that for all \ $ 0 \leq  \eta \leq  \overline{\eta} $ \
      the angle  \ $\angle_{\bf \nu + \eta}(\vec{v}, \vec{w})$ \ does not exist in \ $( X , \| \cdot \| )$. 
      This contradicts the 
      definition of \ $ \nu $ \ as an infimum. This proves the first claim of the proposition.                  
  \end{proof}                                            
                                     
   For the next proposition we need the term of a { \it `strictly convex normed space'}. 
     
  \begin{definition}    \rm   \label{stricly convex}
    A $ \mathsf{BW}$ space  $ ( X ,  \| \cdot \| )$  is called {  \it `strictly convex' } if and only if 
       the interior of the line \  $ \{ t \cdot \vec{u} + (1-t) \cdot \vec{v} \ | \ 0 \leq t \leq 1 \} $ 
       lies in the interior of  the unit ball of $ ( X ,  \| \cdot \| )$ for each pair of distinct unit vectors 
       $( \vec{u}, \vec{v}) , \ \vec{u} \neq \vec{v} $.  That means it holds  \\
    \centerline{   $  \| t \cdot \vec{u} + (1-t) \cdot \vec{v} \| < 1 $ \ for \ $ 0 < t < 1 $. }
  \end{definition}   
  \begin{definition}      \rm        \label{stricly curved}
   We call a $ \mathsf{BW}$ space   $ ( X ,  \| \cdot \| )$  { \it `strictly curved' } if and only if 
       for each pair of distinct unit vectors \ $( \vec{u}, \vec{v}) , \ \vec{u} \neq \vec{v} $ \ 
       the line \  $ \{ t \cdot \vec{u} + (1-t) \cdot \vec{v} \ | \ 0 \leq t \leq 1 \} $ \
       contains at least one element \ $ \widehat{t}$ \ with \ $ 0 < \widehat{t} <  1 $ \ 
       which is not a unit vector, i.e. for \ $ \widehat{t} $ \ holds  \\
    \centerline{   $  \left\| \widehat{t} \cdot \vec{u} + (1-\widehat{t}) \cdot \vec{v} \right\| \neq 1 $. }
  \end{definition}   
        Note that in normed spaces both definitions are equivalent. Further, a positive definite  $ \mathsf{BW}$ space 
        which is strictly convex is a normed space, and it is strictly curved. 
    
        Further, a $ \mathsf{BW}$ space 
        $( X , \| \cdot \| )$ which is not strictly curved must contain a piece of a straight line
        which is completely  in the unit sphere of $ ( X ,  \| \cdot \| )$. As examples we can take the two 
        H\"older norms \ $ \| \cdot \|_{1} $ \ and \ $ \| \cdot \|_{\infty} $ on $\mathbbm{R}^{2}$. \
        The unit spheres of both spaces have the shape of a square.  \ The   H\"older weights \ $ \| \cdot \|_{p} $ \
        on $\mathbbm{R}^{2}$ with \ $ 0 < p < 1 $ \ yield examples of $ \mathsf{BW}$ spaces which are 
        strictly curved, but not strictly convex. 
  \newpage
   \begin{proposition}      \label{proposition  fuenf oder sechs}
         Let \ $ ( X ,  \| \cdot \| )$ \ be a real positive definite $ \mathsf{BW}$ space. 
         Let $  \Upsilon( X ,  \| \cdot \| ) = ( \nu, \mu ) $. \  
         If   $ ( X ,  \| \cdot \| )$  is not strictly curved  we have \ $ \mu = 1 $.
   \end{proposition}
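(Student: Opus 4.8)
The plan is to prove the two inequalities $\mu\le 1$ and $\mu\ge 1$ separately: the upper bound comes entirely from the flat piece of the unit sphere, while the lower bound comes from the defining inequality of $\angle_{\bf 1}$. First I would unwind the hypothesis using the observation preceding the statement: a space that is not strictly curved contains a whole segment of its unit sphere, i.e. there are distinct unit vectors $\vec{u},\vec{v}$ with $\|t\cdot\vec{u}+(1-t)\cdot\vec{v}\|=1$ for every $t\in[0,1]$. Writing $\vec{w}_t:=t\cdot\vec{u}+(1-t)\cdot\vec{v}$, the key computation is that for any $t_1,t_2\in[0,1]$ the two unit vectors $\hat{x}:=\vec{w}_{t_1}$ and $\hat{y}:=\vec{w}_{t_2}$ satisfy ${\bf s}(\hat{x},\hat{y})=2$ and ${\bf d}(\hat{x},\hat{y})=|t_1-t_2|\cdot\|\vec{u}-\vec{v}\|$. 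Indeed $\hat{x}+\hat{y}=2\cdot\vec{w}_{(t_1+t_2)/2}$ with $(t_1+t_2)/2\in[0,1]$, so $\|\hat{x}+\hat{y}\|=2$, while $\hat{x}-\hat{y}=(t_1-t_2)(\vec{u}-\vec{v})$. Hence, abbreviating $a:=\frac{1}{4}{\bf d}^2=\frac{1}{4}(t_1-t_2)^2\|\vec{u}-\vec{v}\|^2$, I obtain $\frac{1}{4}{\bf \Sigma}=1+a$ and $\frac{1}{4}{\bf \Delta}=1-a$, where $a$ can be made an arbitrarily small positive number by letting $t_1\to t_2$ (note $\|\vec{u}-\vec{v}\|>0$ by positive definiteness).

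For $\mu\le 1$ I would fix an arbitrary $\varrho>1$ and exhibit a pair on which $\angle_{\bf \varrho}$ fails. By Definition \ref{Definition drei} the relevant quantity is $\left|\frac{1}{4}{\bf \Delta}\right|\cdot\left(\frac{1}{4}{\bf \Sigma}\right)^{\bf \varrho}=(1-a)(1+a)^{\varrho}$ for $a<1$. Setting $h(a):=(1-a)(1+a)^{\varrho}$ one has $h(0)=1$ and $h'(0)=\varrho-1>0$, so $h(a)>1$ for all sufficiently small $a>0$. Choosing $t_1,t_2$ close enough produces such an $a$, the {\sf CSB} inequality is violated, and therefore $(X,\|\cdot\|)\notin\mathsf{pdBW}_{\bf \varrho}$. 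Since $\varrho>1$ was arbitrary, no number exceeding $1$ lies in the set defining $\mu$, whence $\mu\le 1$.

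For $\mu\ge 1$ I would verify that $\angle_{\bf 1}$ is defined throughout $X$, since then $\mu\ge 1$ follows immediately from the definition of the supremum. By Definition \ref{Definition drei} this amounts to $\frac{1}{16}\left|{\bf s}^4-{\bf d}^4\right|\le 1$ for every pair of unit vectors, which certainly holds once ${\bf s}\le 2$ and ${\bf d}\le 2$. Here the triangle inequality enters, giving ${\bf s}=\|\hat{x}+\hat{y}\|\le 2$ and ${\bf d}=\|\hat{x}-\hat{y}\|\le 2$; this is exactly the content of $\mathsf{NORM}=\mathsf{NORM}_{\bf 1}$ in Proposition \ref{corollary eins}. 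Combining this with $\mathsf{pdBW}=\mathsf{pdBW}_{\bf -1}$ and the monotonicity of Theorem \ref{erstes theorem} yields that $\angle_{\bf \varrho}$ exists for all $\varrho\in[-1,1]$, so $\mu\ge 1$, and together with the previous step $\mu=1$.

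I expect this last half to be the genuine obstacle. The flat-segment computation only ever produces factors $\frac{1}{4}{\bf \Sigma}=1+a>1$, which bound $\mu$ from \emph{above} but never from below, so the inequality $\mu\ge 1$ cannot be extracted from the non-strict-curvedness hypothesis alone; it rests on the global bound ${\bf s},{\bf d}\le 2$, i.e. on convexity of the unit ball. For a normed space this is automatic (Proposition \ref{corollary eins}), and that is the regime in which the argument closes cleanly. The careful point to secure, therefore, is that the ambient space furnishes ${\bf s},{\bf d}\le 2$, since without it one could retain a flat edge while pushing a spike inward so that some pair has ${\bf s}>2$ and $\angle_{\bf 1}$ already fails; I would make the triangle inequality explicit at precisely this step.
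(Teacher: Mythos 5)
Your first half is, in substance, the paper's own proof. The paper also extracts from non-strict-curvedness a segment of unit vectors, $\|\vec{z}+t\cdot\vec{w}\|=1$ for $t\in[-{\bf r},{\bf r}]$, takes the symmetric pair $\vec{x}=\vec{z}+t\cdot\vec{w}$, $\vec{y}=\vec{z}-t\cdot\vec{w}$ (your $\vec{w}_{t_1},\vec{w}_{t_2}$ with midpoint on the segment), computes $\frac{1}{4}{\bf \Sigma}=1+t^{2}$ and $\frac{1}{4}{\bf \Delta}=1-t^{2}$, and shows $\left(1-t^{2}\right)\left(1+t^{2}\right)^{\bf \varrho}>1$ for small $t>0$ whenever ${\bf \varrho}>1$; the only difference is that the paper applies L'Hospital to get $\lim_{t\searrow 0}\bigl(-\log(1-t^{2})/\log(1+t^{2})\bigr)=1$, where you differentiate $h(a)=(1-a)(1+a)^{\varrho}$ at $a=0$. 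Both are correct, and both give exactly $\mu\le 1$.

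Your suspicion about the second half is not a defect of your proposal; it is a defect of the paper. The paper's proof stops after the argument above — it shows that no ${\bf \varrho}>1$ admits the angle and then declares the proposition proven, so $\mu\ge 1$ is never addressed. Indeed it cannot be proved from the stated hypotheses, because it is false in the $\mathsf{pdBW}$ generality: the paper's own spaces $\left(\mathbbm{R}^{2},\|\cdot\|_{{\tt hexagon},r}\right)$ with $r>1$ are positive definite $\mathsf{BW}$ spaces which are not strictly curved (they have a concave corner at $(0|1)$, and by the remark following Definition \ref{definition concave corner} such a space is not strictly curved), yet by the unlabelled proposition on concave corners proved just before Proposition \ref{proposition zehn} they do not lie in $\mathsf{pdBW}_{\bf 0}$, hence by the monotonicity in Theorem \ref{erstes theorem} they lie in no $\mathsf{pdBW}_{\bf \varrho}$ with ${\bf \varrho}\ge 0$, so $\mu\le 0<1$. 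This is precisely the ``flat edge plus inward spike'' scenario you described. The statement that is true, and that both you and the paper actually prove, is $\mu\le 1$; equality needs the extra hypothesis you isolated, namely that $\angle_{\bf 1}$ exists — e.g.\ that the space is normed, where the triangle inequality gives ${\bf s},{\bf d}\le 2$, hence $\frac{1}{16}\left|{\bf s}^{4}-{\bf d}^{4}\right|\le 1$, and $\mu\ge 1$ follows from Proposition \ref{corollary eins} together with Theorem \ref{erstes theorem}. That is exactly the regime in which the paper later uses this proposition (normed spaces, in the corollary after Proposition \ref{proposition irgendwelche} and in Proposition \ref{proposition elf}), so your version — $\mu\le 1$ always, with equality for normed spaces — is the correct reading, and is more careful than the paper's own treatment.
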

   \begin{proof}  Let us consider a $ \mathsf{BW}$ space $ ( X , \| \cdot \| )$ which is not strictly curved. 
          As we said above it  contains a piece of a straight line  which is completely  in the unit sphere. 
          This fact described in formulas   means that we have two unit vectors $ \vec{z} , \vec{w} $  and a 
          positive number $ 0 < {\bf r} < 1 $    such that 
    $$  \| \vec{z} +  t \cdot \vec{w} \| = 1 \quad \text{ holds for all } \ t \in [- {\bf r},  {\bf r}] \ . $$  
          Now we show that for each exponent $ {\bf \varrho} > 1 $ we can find two unit vectors $\vec{x}, \vec{y}$
          with the property   $ \left| \frac{1}{4} \cdot  { \bf \Delta}  \right| \cdot   \left\langle 
          \frac{1}{4} \cdot   { \bf \Sigma}   \right\rangle  ^{\bf \varrho} > 1 $.  That means that the 
          ${\bf \varrho}$-angle $ \angle_{\bf \varrho}(\vec{x}, \vec{y}) $ does not exist.   
                                                         
      Let us take the unit vectors \ $ \vec{x} := \vec{z} +  t \cdot \vec{w} \text{ and }                                          \vec{y} := \vec{z} -  t \cdot \vec{w}$ \ for   $ 0 <  t < {\bf r} \ . $
           With \ ${ \bf \Delta} = { \bf \Delta}(\vec{x},\vec{y})$ \ and \
           ${ \bf \Sigma} = {\bf \Sigma}(\vec{x},\vec{y})$  \ we consider the desired inequality \
             $ \left| \frac{1}{4} \cdot  { \bf \Delta}  \right| \cdot   \left\langle 
          \frac{1}{4} \cdot   { \bf \Sigma}   \right\rangle  ^{\bf \varrho} > 1 $, i.e.
     \begin{equation}     \label{gleichung sieben}
          \left| \frac{1}{4} \cdot  { \bf \Delta}  \right| \cdot
          \left( \frac{1}{4} \cdot   { \bf \Sigma}   \right)  ^{\bf \varrho} \
          = \ \left| \frac{1}{4} \cdot  \left({ \bf s}^{2} - { \bf d}^{2}\right)  \right| \cdot 
          \left( \frac{1}{4} \cdot  \left({ \bf s}^{2} + { \bf d}^{2}\right)  \right)  ^{\bf \varrho} \
          = \ \left(1-t^{2}\right) \cdot \left(1+ t^{2}\right)^{\bf \varrho}    \  > \ 1 \ .   
     \end{equation}      
          This is equivalent to
     \begin{align}   \label{gleichung acht}
          {\bf \varrho} \ > \ - \ \frac{\log\left(1-t^{2}\right)}{\log\left(1+t^{2}\right)} \ .   
     \end{align}          
          The right hand side is greater than $1$ for all  $ 0 <  t < {\bf r} $.  
          By the rules of L'Hospital we get the limit \ 
      $$ \lim_{ t \searrow 0} \left( - \ \frac{\log\left(1-t^{2}\right)}{\log\left(1+t^{2}\right)}\right) \ = \ 1 \ . $$
           This means that we can find for all  $ {\bf \varrho} > 1 $ a suitable \ $ t $ \ such that
           Inequality (\ref{gleichung sieben}) is fulfilled. \
           Hence, for each  ${\bf \varrho} > 1$, \ we are able to find a pair of unit vectors \
           $\vec{x} = \vec{z} +  t \cdot \vec{w}$ \ and \ $ \vec{y} = \vec{z} -  t \cdot \vec{w} $ \ 
           such that the ${\bf \varrho}$-angle $ \angle_{\bf \varrho}(\vec{x}, \vec{y})$ 
           does not exist. \ Proposition  \ref{proposition  fuenf oder sechs} is proven.
   \end{proof}  
                                                            
   Now we introduce the concept of a  { \it `convex corner'}. The word  `convex' seems to be superfluous 
   in normed spaces. But later we  define also something that we shall call { \it `concave corner'}.   
   These can occur in  $ \mathsf{BW}$ spaces  which have a non-convex unit ball. This justifies the adjective `convex'.    
  \begin{definition}  \rm   \label{definition convex corner}      
          \quad Let  the pair \ $ (X , \| \cdot \| ) $  \rm  be a  $ \mathsf{BW}$ space, \ 
          let  \  $ \widehat{y} \in X $.  \   
          The vector $\widehat{y}$ \  is  called  a  { \it convex  corner }  if and only if there is another vector \ 
          $ \overline{x} \in X $ \ and there are two real numbers \  $m_- < m_+ $ \
          such that we have a pair of unit vectors for each \ $ \delta \in [ 0, 1 ] $, we have 
     \begin{align}       
        \| \delta \cdot \overline{x}+ ( 1 + \delta \cdot m_- ) \cdot  \widehat{y} \| \ = \ 1 
        \ = \ \| -\delta \cdot \overline{x} + ( 1 - \delta \cdot m_+ ) \cdot \widehat{y} \| \ .    
     \end{align}      
        \hfill  $\Box$           
  \end{definition}      
    \begin{remark}  \rm
          A convex corner is only the mathematical description of something what everybody already 
          has in his mind.
          We can imagine it as an   intersection of two straight lines of unit vectors which meet with an
          Euclidean angle of less than 180 degrees.  
                                 
       Note that from the definition follows  \ $\|\widehat{y}\| = 1 $ \ and that \ 
          $ \{ \widehat{y}, \overline{x}\} $ \ is linear independent.  Further note that 
          a space with a convex corner is not strictly curved.
    \end{remark} 
    As examples we can take the  {\it H\"older weights } $\| \cdot \|_{1} $ and 
           $ \| \cdot \|_{\infty} $  on   $\mathbbm{R}^{2}$. 
           Both spaces have four convex corners, e.g.  \ $ \left(\mathbbm{R}^{2}, \| \cdot \|_{1}\right)$ has one at 
           $(0|1)$, and \ $\left( \mathbbm{R}^{2}, \| \cdot \|_{\infty}\right)$ has one at $(1|1)$. 
           They are just the corners of  the corresponding unit spheres, i.e. the corners of the squares. 
    \begin{proposition}   \label{proposition irgendwelche}
      Let  $ ( X ,  \| \cdot \| )$ \ be a positive definite balancedly weighted vector space
            which has a convex corner. \ Let  $\Upsilon( X ,  \| \cdot \| ) = ( \nu , \mu ) $. \
            We claim  \ $ \nu = - 1 $. 
    \end{proposition}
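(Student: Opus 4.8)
The plan is to prove the two inequalities $\nu \le -1$ and $\nu \ge -1$ separately. The first is already available: since $\mathsf{pdBW} = \mathsf{pdBW}_{\bf -1}$, the space $(X, \|\cdot\|)$ has the angle $\angle_{\bf -1}$, so $-1$ belongs to the set whose infimum defines $\nu$, whence $\nu \le -1$ (this is also the content of the remark after Definition \ref{definition fuenf}). The real work lies in the reverse inequality $\nu \ge -1$, which amounts to showing that for every exponent ${\bf \varrho} < -1$ the space does \emph{not} have the angle $\angle_{\bf \varrho}$. By Definition \ref{Definition drei} it suffices to exhibit, for each such ${\bf \varrho}$, a single pair of unit vectors violating the {\sf CSB} inequality, i.e. with $\left| \frac{1}{4} \cdot {\bf \Delta} \right| \cdot \left( \frac{1}{4} \cdot {\bf \Sigma} \right)^{\bf \varrho} > 1$. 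These vectors I would take from the two edges of unit vectors meeting at the convex corner.

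Concretely, let $\widehat{y}$, $\overline{x}$ and the reals $m_- < m_+$ be as in Definition \ref{definition convex corner} (recall $\|\widehat{y}\| = 1$ and that $\{\widehat{y}, \overline{x}\}$ is linearly independent), and for $\delta \in [0,1]$ I set
\begin{align*}
    \vec{x} := \delta \cdot \overline{x} + (1 + \delta \cdot m_-) \cdot \widehat{y}, \qquad
    \vec{y} := -\delta \cdot \overline{x} + (1 - \delta \cdot m_+) \cdot \widehat{y},
\end{align*}
which are unit vectors by the defining property of the corner. Then $\vec{x} + \vec{y} = \left(2 - \delta \cdot (m_+ - m_-)\right) \cdot \widehat{y}$ and $\vec{x} - \vec{y} = \delta \cdot \left(2 \overline{x} + (m_- + m_+) \widehat{y}\right)$, so with $M := m_+ - m_- > 0$ and $C := \|2\overline{x} + (m_- + m_+)\widehat{y}\| > 0$ (positive by linear independence, since the coefficient of $\overline{x}$ is $2 \neq 0$) I obtain, for small $\delta > 0$, the clean expressions ${\bf s} = 2 - M\delta$ and ${\bf d} = C\delta$. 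Hence $\frac{1}{4} {\bf \Delta} = \frac{1}{4}\left((2 - M\delta)^2 - C^2\delta^2\right)$ and $\frac{1}{4} {\bf \Sigma} = \frac{1}{4}\left((2 - M\delta)^2 + C^2\delta^2\right)$, both tending to $1$ as $\delta \searrow 0$.

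Writing $f(\delta) := \frac{1}{4} {\bf \Delta}$ and $g(\delta) := \frac{1}{4} {\bf \Sigma}$, I have $f(0) = g(0) = 1$ and, on differentiating the formulas above, $f'(0) = g'(0) = -M$ (the $\delta^2$-terms contribute nothing at first order). Since $f > 0$ and $g > 0$ near $\delta = 0$, I may set $h(\delta) := \left| \frac{1}{4} {\bf \Delta} \right| \cdot \left(\frac{1}{4} {\bf \Sigma}\right)^{\bf \varrho} = f(\delta) \cdot g(\delta)^{\bf \varrho}$, so that $h(0) = 1$ and
\begin{align*}
    (\log h)'(0) = \frac{f'(0)}{f(0)} + {\bf \varrho} \cdot \frac{g'(0)}{g(0)} = -M - {\bf \varrho} \cdot M = -M \cdot (1 + {\bf \varrho}).
\end{align*}
For ${\bf \varrho} < -1$ one has $1 + {\bf \varrho} < 0$, and since $M > 0$ this derivative is strictly positive; hence $h(\delta) > h(0) = 1$ for all sufficiently small $\delta > 0$. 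Such a pair $\vec{x}, \vec{y}$ violates the {\sf CSB} inequality for $< . \: | \: . >_{\bf \varrho}$, so the ${\bf \varrho}$-angle is undefined and $(X, \|\cdot\|) \notin \mathsf{pdBW}_{\bf \varrho}$. As ${\bf \varrho} < -1$ was arbitrary, no exponent below $-1$ lies in the defining set of $\nu$, giving $\nu \ge -1$ and therefore $\nu = -1$.

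The computation is routine; the one genuinely load-bearing decision is the \emph{symmetric} pairing of one unit vector from each edge of the corner, which forces $\vec{x} + \vec{y}$ to be a multiple of $\widehat{y}$ and makes both $f$ and $g$ approach $1$ with the \emph{same} first-order slope $-M$. This common slope is exactly what produces the factor $(1 + {\bf \varrho})$ in $(\log h)'(0)$, so that the sign flips precisely at ${\bf \varrho} = -1$; the strict inequality $m_- < m_+$ (i.e. $M > 0$) is used in an essential way here. The only point I expect to require a little care is the positivity of $\frac{1}{4} {\bf \Delta}$ for small $\delta$, which is what justifies dropping the absolute value and passing to logarithms.
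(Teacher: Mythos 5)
Your proof is correct and takes essentially the same approach as the paper: you choose the identical symmetric pair of unit vectors $\vec{v} = \delta \cdot \overline{x} + (1 + \delta \cdot m_-) \cdot \widehat{y}$ and $\vec{w} = -\delta \cdot \overline{x} + (1 - \delta \cdot m_+) \cdot \widehat{y}$ along the two edges of the corner, and reduce the claim to showing $\frac{1}{4}{\bf \Delta} \cdot \bigl( \frac{1}{4}{\bf \Sigma} \bigr)^{\bf \varrho} > 1$ for small $\delta > 0$ whenever ${\bf \varrho} < -1$. The only difference is cosmetic: the paper verifies this by applying L'Hospital to the quotient $\log\bigl(\frac{1}{4}{\bf \Delta}\bigr) / \log\bigl(\frac{1}{4}{\bf \Sigma}\bigr)$ and letting it tend to $1$, whereas you differentiate $\log h$ at $\delta = 0$ to get the factor $-M(1+{\bf \varrho})$; both are the same first-order analysis exploiting that ${\bf \Delta}$ and ${\bf \Sigma}$ have equal slope at $\delta = 0$.
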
 
    \begin{proof} \quad
        We assume in the proposition a convex  corner $ \widehat{y} \in X $ and  another element 
            $\overline{x} \in X$ and two real numbers \ $  m_- < m_+ $ \ with the properties of Definition 
            \ref{definition convex corner}.  
            We get with Proposition  \ref{corollary eins} the inequality  $ \nu \leq - 1 $.  
            Let us fix a number  $ \varrho > 1$,  hence  $ -\varrho  < -1 $.  We want to find two vectors \
            $ \widetilde{v}, \widetilde{w} \in X $ \ with \ 
            $ |< \widetilde{v} | \widetilde{w} >_{\bf  -\varrho}| \ > 1 $. \   This would mean that
            the angle \  $ \angle_{\bf -\varrho}(\widetilde{v}, \widetilde{w})$   does not exist.

   We define for each  $ \delta \in [ 0, 1 ] $ the pair of unit vectors  $ \vec{v}, \vec{w} $,  
    $$  \vec{v} :=  \delta \cdot \overline{x} + ( 1 + \delta \cdot m_- ) \cdot  \widehat{y} \ \quad \text{and} \ \quad 
        \vec{w} := -\delta \cdot \overline{x} + ( 1 - \delta \cdot m_+ ) \cdot \widehat{y}  \ .   $$ 
    We use the abbreviations  
          ${ \bf \Delta} =  { \bf \Delta}( \vec{v}, \vec{w})  = { \bf s}^{2} - { \bf d}^{2}  $ \ and \ 
          ${ \bf \Sigma} =  { \bf \Sigma}( \vec{v}, \vec{w})  = { \bf s}^{2} + { \bf d}^{2}  $
          from the beginning of the section `An Infinite Set of Angles', and  we compute     
         \begin{align}   \ \  < \vec{v} \: | \:  \vec{w} >_{\bf  -\varrho } \  \ = & \  \ 
                 <  \delta \cdot \overline{x}+ ( 1 + \delta \cdot m_- ) \cdot  \widehat{y} \ \: | 
                 \:  -\delta \cdot \overline{x} + ( 1 - \delta \cdot m_+ ) \cdot \widehat{y} >_{\bf  -\varrho }   \\
             =  & \ \ \|\vec{v}\| \cdot \|\vec{w}\| \cdot \frac{1}{4} \cdot  { \bf \Delta} \cdot
                  \left(  \frac{1}{4} \cdot  { \bf \Sigma}   \right)^{\bf -\varrho} \\
             =  & \ \ 1 \cdot 1 \cdot
                 \frac{\frac{1}{4} \cdot {\bf \Delta}}{\left( \frac{1}{4} \cdot {\bf \Sigma} \right)^{\bf \varrho}} 
                 \ = \    \frac{ \frac{1}{4} \cdot \left({ \bf s}^{2} - { \bf d}^{2}\right)}
                 { \left[ \frac{1}{4} \cdot \left( {\bf s}^{2} + {\bf d}^{2}\right)\right]^{{\bf \varrho}} }  \\    
             =  &  \ \frac{ \frac{1}{4} \cdot 
                 \left(  \| \left( 2 + \delta \cdot (m_- - m_+)\right) \cdot  \widehat{y} \|^{2} -  
                 \|  2 \cdot \delta \cdot \overline{x} + \delta \cdot (m_- + m_+) \cdot  \widehat{y} \|^{2} \right)}
                 { \left[ \frac{1}{4} \cdot 
                 \left(  \| \left( 2 + \delta \cdot (m_- - m_+)\right) \cdot  \widehat{y} \|^{2} +  
                 \|  2 \cdot \delta \cdot \overline{x} + \delta \cdot (m_- + m_+) \cdot  \widehat{y} \|^{2} 
                 \right) \right]^{\bf \varrho} }  \\  
             = & \ \frac{  \frac{1}{4} \cdot \left( \left( 2 + \delta \cdot (m_- - m_+)\right)^{2} \cdot  
                 \|\widehat{y}\|^{2} \ - \ \delta^{2} \cdot 
                 \| 2 \cdot \overline{x} + (m_- + m_+) \cdot  \widehat{y} \|^{2}  \right) }
                 { \left[ \frac{1}{4} \cdot \left( \left( 2 + \delta \cdot (m_- - m_+)\right)^{2} \cdot  
                 \|\widehat{y}\|^{2} \ + \ \delta^{2} \cdot 
                 \| 2 \cdot \overline{x} + (m_- + m_+) \cdot \widehat{y} \|^{2} \right) \right]^{\bf \varrho}} \ . \\   
                 \text{Hence} \  \label{equality fuenfzehn}
                  &  \  \  < \vec{v} \: | \:  \vec{w} >_{\bf  -\varrho } \ 
              = \  \frac{\frac{1}{4} \cdot {\bf \Delta}}{\left[ \frac{1}{4} \cdot 
                                                         {\bf \Sigma} \right]^{\bf \varrho}} \ \
              =  \ \frac{ 1 + \delta \cdot (m_- - m_+) + \frac{1}{4} \cdot \delta^{2} \cdot  \mathsf{K}_- }
                 { \left[ 1 + \delta \cdot (m_- - m_+) + \frac{1}{4} \cdot \delta^{2} \cdot  \mathsf{K}_+ 
                 \right]^{\bf \varrho} } \ , 
        \end{align} 
        if we define two real constants $  \mathsf{K}_- \ , \mathsf{K}_+ $ by setting 
          $$ \mathsf{K}_- := (m_- - m_+)^{2} - \|2 \cdot \overline{x} + (m_- + m_+) \cdot \widehat{y} \|^{2} \ , \quad
             \mathsf{K}_+ := (m_- - m_+)^{2} + \|2 \cdot \overline{x} + (m_- + m_+) \cdot \widehat{y} \|^{2} \ . $$  
      The above chain of identities holds for all   $ \delta \in [0, 1]$. \
       For  a shorter display we abbreviate the parts  of the fraction by  
      \begin{align*}   \label{gleichung fuenfzehneinhalb}
            \mathsf{T} \ := \ &  \frac{1}{4} \cdot  { \bf \Delta} \ = \
                            1 + \delta \cdot (m_- - m_+) + \frac{1}{4} \cdot \delta^{2} \cdot  \mathsf{K}_-   \ , \\
            \mathsf{B} \ := \ &  \frac{1}{4} \cdot  { \bf \Sigma} \ = \
                            1 + \delta \cdot (m_- - m_+) + \frac{1}{4} \cdot \delta^{2} \cdot  \mathsf{K}_+ \ .
      \end{align*}
      Since \  $  \mathsf{K}_- < \mathsf{K}_+ $ \  and \ $ m_- - m_+ < 0 $ \
            we can find a positive number $ \mathsf{s}$ with $0 < \mathsf{s} < 1$ such that we have 
            for all positive $ \delta$ with $0 < \delta \leq  \mathsf{s} $ \ the inequality
        \begin{align}      
                 0 <  \mathsf{T} <  \mathsf{B} < 1 , \ \ \text{ i.e. } \  
                  1 \ < \  \frac{ \log(\mathsf{T})} { \log(\mathsf{B})} \ . 
       \end{align}           
       
           Our aim is to find vectors $ \vec{v}, \vec{w}$ such that the product 
            $ < \vec{v} | \vec{w} >_{\bf  -\varrho}$ \ is greater than $1$. 
            With Equation  (\ref{equality fuenfzehn}) \ this is equivalent to  
        \begin{align*} 
               < \vec{v} \: | \:  \vec{w} >_{\bf  -\varrho } \  
               \ = \ \frac{\mathsf{T} } {[\mathsf{B}]^{\bf \varrho }} \ > \ 1  \ \
               \Longleftrightarrow \ \  &  \ \log(\mathsf{T})\ >  \ {\bf  \varrho} \cdot \log(\mathsf{B})  \\
               \Longleftrightarrow \ \  &  \ \frac{ \log(\mathsf{T})}{ \log(\mathsf{B})} \ < \ { \bf  \varrho} \ ,
               \quad \text{note that } \log(\mathsf{B}) \text{ is negative, see }(\ref{gleichung fuenfzehneinhalb}) \ .
        \end{align*}   
        By the rules of L'Hospital we get the limit
        \begin{align*} 
                 \lim_{\delta \searrow 0} \ \ \left(  \frac{ \log(\mathsf{T})}{ \log(\mathsf{B})} \right) \ = \ 1
        \end{align*}   
        With   Inequation (\ref{gleichung fuenfzehneinhalb}) it follows that we can find a very small \
        $ \widetilde{\delta}$ \ with \ $ 0 < \widetilde{\delta} < \mathsf{s} $ \ such that 
    $$    1 \ < \  \frac{ \log(\mathsf{T})} { \log(\mathsf{B})} \ < \ {\bf  \varrho}  $$
        is fulfilled.   That means with the definition of \
         $$  \widetilde{v} := \widetilde{\delta} \cdot \overline{x} + 
                  ( 1 + \widetilde{\delta} \cdot m_- ) \cdot  \widehat{y} \quad  \text{ and } \quad
             \widetilde{w} := -\widetilde{\delta} \cdot \overline{x} + 
                  ( 1 - \widetilde{\delta} \cdot m_+ ) \cdot \widehat{y}$$
         we get the desired inequality  \   $ < \widetilde{v} | \widetilde{w} >_{\bf  -\varrho} \ > 1 $.    
         Hence  the  ${\bf  -\varrho}$-angle \ 
            $ \angle_{-\varrho}(\widetilde{v}, \widetilde{w})$   does not exist.  Since the variable  \
            $ { \bf -\varrho} $ \ is an arbitrary number less than $-1$,  \
             Proposition \ref{proposition irgendwelche} is proven. 
    \end{proof} 
    \begin{corollary}
        Let $ ( X ,  \| \cdot \| ) \in  \mathsf{NORM} $. Further we assume that $( X, \| \cdot \| )$ has a convex
        corner. \\
        \centerline{  It follows \ $\Upsilon( X ,  \| \cdot \| ) = ( -1 , 1 ) $.   }
    \end{corollary}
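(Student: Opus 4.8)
The plan is to read off both coordinates of $\Upsilon(X,\|\cdot\|)=(\nu,\mu)$ directly from the two preceding propositions, so that essentially no new computation is needed beyond checking that their hypotheses are met. First I would record that every normed space is a positive definite $\mathsf{BW}$ space, i.e. $\mathsf{NORM}\subset\mathsf{pdBW}$, so that both Proposition \ref{proposition irgendwelche} and Proposition \ref{proposition  fuenf oder sechs} are applicable to $(X,\|\cdot\|)$.

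For the first coordinate the argument is immediate: by hypothesis $(X,\|\cdot\|)$ has a convex corner, and this is precisely the assumption of Proposition \ref{proposition irgendwelche}, which yields $\nu=-1$. For the second coordinate I would invoke the observation recorded in the remark following Definition \ref{definition convex corner}, namely that a space possessing a convex corner is not strictly curved. Concretely, the defining equalities of a convex corner say that for each $\delta\in[0,1]$ the vectors $\delta\cdot\overline{x}+(1+\delta\cdot m_-)\cdot\widehat{y}$ (and likewise $-\delta\cdot\overline{x}+(1-\delta\cdot m_+)\cdot\widehat{y}$) are unit vectors; as $\delta$ varies these trace a straight segment $\widehat{y}+\delta\cdot(\overline{x}+m_-\cdot\widehat{y})$ lying entirely in the unit sphere, and any two distinct unit vectors on such a segment witness the failure of the condition in Definition \ref{stricly curved}. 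Thus the hypothesis of Proposition \ref{proposition  fuenf oder sechs} holds, and that proposition gives $\mu=1$. Combining the two coordinates, $\Upsilon(X,\|\cdot\|)=(\nu,\mu)=(-1,1)$, as claimed.

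The only point deserving any care — and the closest thing to an obstacle — is the passage from ``convex corner'' to ``not strictly curved'', since Proposition \ref{proposition  fuenf oder sechs} is phrased in terms of the latter. This is however forced directly by Definition \ref{definition convex corner}: at $\delta=0$ both equalities collapse to $\|\widehat{y}\|=1$, while for $\delta>0$ they exhibit honest straight segments of unit vectors emanating from $\widehat{y}$, so the unit sphere contains a flat piece and strict curvature fails. It is also worth noting that there is no conflict with the earlier remark that $\mu\in[1,+\infty]$ for normed spaces: the convex corner pins $\mu$ down to the lower endpoint $1$ of that range, consistent with the fact that a normed space with a flat piece on its sphere cannot be an inner product space and hence cannot have $\mu=+\infty$.
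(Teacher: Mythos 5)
Your proof is correct and follows the same route as the paper: the paper's own proof simply cites the two preceding propositions (the convex-corner proposition giving $\nu=-1$ and the not-strictly-curved proposition giving $\mu=1$), relying on the remark after the definition of a convex corner for the fact that a convex corner forces the space not to be strictly curved. The only difference is that you spell out that remark explicitly (the defining equalities trace a straight segment of unit vectors in the sphere), which is a welcome but inessential elaboration.
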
 
    \begin{proof}  This is a direct  consequence of  Proposition  \ref{proposition  fuenf oder sechs}
                   and  Proposition  \ref{proposition irgendwelche}.
    \end{proof}   
    \begin{corollary}    It holds for the   {\it H\"older weights }  
           $\| \cdot \|_{1}$ and  $\| \cdot \|_{\infty}$ on $\mathbbm{R}^{2}$    \\
           \centerline{   $\Upsilon(\mathbbm{R}^{2}, \| \cdot \|_{1}) \ = \ 
           \Upsilon(\mathbbm{R}^{2}, \| \cdot \|_{\infty}) \ = \ ( -1 , 1 ) $.   }
    \end{corollary}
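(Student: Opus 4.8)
The plan is to reduce this statement entirely to the preceding Corollary, which asserts that any real normed space possessing a convex corner satisfies $\Upsilon = (-1,1)$. Thus it suffices to verify, for each of the two spaces, the two hypotheses of that corollary: membership in $\mathsf{NORM}$, and the existence of a convex corner in the sense of Definition \ref{definition convex corner}. Membership in $\mathsf{NORM}$ is immediate from the remark in Section \ref{section three}: the H\"older weight $\| \cdot \|_p$ is a norm precisely when $p \geq 1$, and both $p = 1$ and $p = \infty$ satisfy this.

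For $(\mathbbm{R}^2, \| \cdot \|_1)$ I would exhibit a convex corner at $\widehat{y} := (0,1)$. Taking $\overline{x} := (1,0)$ and the slopes $m_- := -1 < m_+ := 1$, the two families of vectors from Definition \ref{definition convex corner} become
\[
\delta \cdot \overline{x} + (1 + \delta \cdot m_-) \cdot \widehat{y} = (\delta,\, 1 - \delta), \qquad
-\delta \cdot \overline{x} + (1 - \delta \cdot m_+) \cdot \widehat{y} = (-\delta,\, 1 - \delta),
\]
and for every $\delta \in [0,1]$ both have $\| \cdot \|_1$-length $\delta + (1-\delta) = 1$. Since $\|\widehat{y}\|_1 = 1$ and $\{\widehat{y}, \overline{x}\}$ is linearly independent, this exhibits $\widehat{y}$ as a convex corner.

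For $(\mathbbm{R}^2, \| \cdot \|_\infty)$ I would take $\widehat{y} := (1,1)$, together with $\overline{x} := (1,-1)$ and the same slopes $m_- := -1 < m_+ := 1$. Then
\[
\delta \cdot \overline{x} + (1 + \delta \cdot m_-) \cdot \widehat{y} = (1,\, 1 - 2\delta), \qquad
-\delta \cdot \overline{x} + (1 - \delta \cdot m_+) \cdot \widehat{y} = (1 - 2\delta,\, 1),
\]
and since $|1 - 2\delta| \leq 1$ for $\delta \in [0,1]$, both have $\| \cdot \|_\infty$-length equal to $1$; hence $\widehat{y}$ is a convex corner here as well. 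Applying the preceding Corollary to each space --- equivalently, combining Proposition \ref{proposition  fuenf oder sechs}, which forces $\mu = 1$ because neither square-shaped unit sphere is strictly curved, with Proposition \ref{proposition irgendwelche}, which forces $\nu = -1$ --- yields $\Upsilon(\mathbbm{R}^2, \| \cdot \|_1) = \Upsilon(\mathbbm{R}^2, \| \cdot \|_\infty) = (-1,1)$.

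The only genuine point to watch is that the convex-corner identity must hold for \emph{all} $\delta \in [0,1]$ and not merely for infinitesimal $\delta$; this is exactly where one uses that the relevant portions of both unit spheres are honest straight segments (the edges of the squares) rather than curved arcs, which is what makes the explicit choices above check out globally. Beyond this verification no computation with the products $< . \: | \: . >_{\bf \varrho}$ is needed, since all the analytic work has already been carried out in the two propositions underlying the preceding corollary.
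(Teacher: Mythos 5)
Your proposal is correct and follows exactly the paper's route: the paper likewise obtains this corollary by citing the preceding corollary (normed space with a convex corner has $\Upsilon = (-1,1)$), having already asserted that $(\mathbbm{R}^{2},\|\cdot\|_{1})$ and $(\mathbbm{R}^{2},\|\cdot\|_{\infty})$ have convex corners at $(0|1)$ and $(1|1)$ respectively. The only difference is that you explicitly verify the convex-corner data $(\overline{x}, m_-, m_+)$ against Definition \ref{definition convex corner}, which the paper leaves implicit.
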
 
         Now we introduce a corresponding definition of `concave corners'. Note that they can not occur in normed
         spaces. In a normed space the triangle inequality $\widehat{(3)}$ holds, as a consequence its unit
         ball is  `everywhere' convex.      
    \begin{definition}   \rm     \label{definition concave corner}        
          \quad Let  the pair \ $ ( X , \| \cdot \| \   ) $  \ \rm  be a  \ $ \mathsf{BW}$ space, \
          let  \  $ \widehat{y} \in X $.   \   
          $ \widehat{y} $ \  is  called  a  { \it concave   corner } \  if and only if \                      
          there is  an \ $ \overline{x} \in X $,  \
                   and there are two real numbers \  $m_- < m_+ $ \
          such that we have a pair of unit vectors for each \ $ \delta \in [ 0, 1 ] $, we have           
        \begin{align}       
                 \| \delta \cdot \overline{x}+ ( 1 + \delta \cdot m_+ ) \cdot  \widehat{y} \| \ =  \ 1 
                 \ = \ \| -\delta \cdot \overline{x} + ( 1 - \delta \cdot m_- ) \cdot \widehat{y} \| \ . 
        \end{align}        $ { } $  \hfill  $\Box$                        
    \end{definition}                 
    \begin{remark}  \rm
          Note that from the definition follows \ $\|\widehat{y}\| = 1 $ \ and that \ 
          $ \{ \widehat{y}, \overline{x}\} $ \ is linear independent. 
          Further note that a space $( X , \| \cdot \| )$ with a concave corner contains a piece of a straight line
          which is completely in its unit sphere, i.e. $ ( X ,  \| \cdot \| )$ \ is not strictly curved.
    \end{remark}
    We get a set of balanced weights on \ $\mathbbm{R}^{2}$ \ if we define  for every  \ $ r \geq 0 $ \  a weight  \ \ 
           $ \| \cdot \|_{{ \tt hexagon},r} :   \mathbbm{R}^{2} \longrightarrow \mathbbm{R}^{+} \cup \{0\} $, 
           if  we fix the unit sphere  $ {\bf S}$ \ of $ ( \mathbbm{R}^{2}, \| \cdot \|_{{ \tt hexagon},r} ) $ 
           with the polygon    through the six  points \ $ \{ (0|1), (1|r), (1|-r), (0|-1),(-1|-r),(-1|r) \}$ \
           and returning to $(0|1)$,  and then extending \ $\| \cdot \|_{{ \tt hexagon},r}$ \ by  homogeneity. \ \ 
           (See Figure 1).   
  \newpage
  %
     \begin{figure}[ht]    \centering
          \setlength{\unitlength}{10mm}
          \begin{picture}(6,5)     \thinlines  
                  \put(6.15,-0,1){$   {\tt x}   $}   \put(-0.4,5.3){$  {\tt y} $}    
                  \put(-6.0,0){\vector(1,0){12.0}}   \put(0,-4.2){\vector(0,1){9.7}}  
                  \put(-2.0,-4.0){\line(0,1){8.0}}   \put(2.0,4.0){\line(0,-1){8.0}} 
                  \put(-2.0,4.0){\line(1,-1){2.0}}   \put(0.0,2.0){\line(1,1){2.0}}   \put(0.0,-2.0){ $ -1 $} 
                  \put(0.0,1.6){ $ 1 $ }   \put(-0.6,1.6){ $ \widehat{y} $ } 
                  \put(1.35,-0.45){ $ \overline{x} $ }   
                  \put(-2.0,-4.0){\line(1,1){2.0}}   \put(0.0,-2.0){\line(1,-1){2.0}} 
                       \put(-0.6,3.85){ $2$}    \put(-1.0,-4.10){ $-2$} 
                  \put(-0.1,4.0){\line(1,0){0.2}}     \put(-0.1,-4.0){\line(1,0){0.2}}   
                    \put(1.95,-0.5){ $ 1 $}  \put(-2.0,-0.5){ $ -1 $} 
                    \put(3.75,-0.6){ $2$}  \put(-4.5,-0.6){ $-2$} 
                   \put(-4.0,-0.1){\line(0,1){0.2}}     \put(4.0,-0.1){\line(0,1){0.2}}     
                  \put(0.5,5.2){The unit sphere of \ $\left( \mathbbm{R}^{2}, \| \cdot \|_{{ \tt hexagon},2}
                                  \right) $  with the concave corner }  
                  \put(0.5,4.4){   $ \widehat{y} = (0|1)$.  \quad 
                                We have \quad $ \overline{x} = (1|0), \ \ m_- = -1 < m_+ = +1 $.  } 
                   \put(3.5,-2.4){ Figure 1 }              
         \end{picture}
     \end{figure}	    
    $ $      \\  \\  \\  \\  \\  \\  \\  \\ \\  \\ \\ 
     Note that the balanced weights \ $ \| \cdot \|_{{ \tt hexagon},0} $ \ and \ $ \| \cdot \|_{{ \tt hexagon},1} $ 
                \ on \ $ \mathbbm{R}^{2}$ \ coincide with the H\"older weights
                $ \| \cdot \|_{1} $ and  $ \| \cdot \|_{\infty} $, respectively, which have been defined in the 
                third section.   Further, the pairs $ \left( \mathbbm{R}^{2} ,  \| \cdot \|_{{ \tt hexagon},r} \right)$
                are normed spaces if and only if $ 0 \leq r \leq 1 $. 
     \begin{lemma}     \it      
           For all \  $ r > 1 $  \ the space \ \ $ \left( \mathbbm{R}^{2} ,  \| \cdot \|_{{ \tt hexagon},r}\right) $ \
           has  a  concave corner at \ $ \widehat{y} := (0|1) $, \ with \ 
           $\overline{x} := (1|0), \ m_- := 1 - r < 0 < m_+ := r - 1 $.   
     \end{lemma}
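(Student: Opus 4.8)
The plan is to verify the defining equalities of a concave corner from Definition~\ref{definition concave corner} directly, exploiting the two edges of the hexagonal unit sphere that meet at the vertex $(0|1)$. First I would recall that the unit sphere of $\left(\mathbbm{R}^{2},\|\cdot\|_{{\tt hexagon},r}\right)$ is by construction the polygon running through the six points $(0|1),(1|r),(1|-r),(0|-1),(-1|-r),(-1|r)$; in particular every point on the edge $E_+$ joining $(0|1)$ to $(1|r)$ and every point on the edge $E_-$ joining $(0|1)$ to $(-1|r)$ is a unit vector. For $r>1$ the vertex $(0|1)$ lies strictly below the segment connecting its two neighbours $(\pm 1|r)$, so it is a genuine inward dent of the polygon, which is exactly the geometric situation a concave corner is meant to describe.

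The heart of the argument is a parametrization check against these two edges. With $\widehat{y}=(0|1)$, $\overline{x}=(1|0)$ and $m_+ = r-1$, I would compute
\[ \delta\cdot\overline{x} + (1+\delta\cdot m_+)\cdot\widehat{y} \;=\; (\delta,\,1+\delta(r-1)) \;=\; (1-\delta)\cdot(0|1) + \delta\cdot(1|r), \]
which is precisely the affine parametrization of the edge $E_+$, so its $\|\cdot\|_{{\tt hexagon},r}$-norm equals $1$ for every $\delta\in[0,1]$. Symmetrically, with $m_- = 1-r$,
\[ -\delta\cdot\overline{x} + (1-\delta\cdot m_-)\cdot\widehat{y} \;=\; (-\delta,\,1+\delta(r-1)) \;=\; (1-\delta)\cdot(0|1) + \delta\cdot(-1|r), \]
the parametrization of the edge $E_-$, which likewise has norm $1$ for every $\delta\in[0,1]$. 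These are exactly the two equalities required in Definition~\ref{definition concave corner}, so the pair of vectors produced for each $\delta$ consists of unit vectors.

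It then remains to check the side conditions, all of which are immediate. Since $r>1$ we have $m_- = 1-r < 0 < r-1 = m_+$, which gives both the prescribed sign pattern and the ordering $m_- < m_+$. Moreover $\widehat{y}=(0|1)$ is a vertex of the polygon, hence $\|\widehat{y}\|=1$, and $\{\widehat{y},\overline{x}\}=\{(0|1),(1|0)\}$ is linearly independent, as the remark following the definition requires.

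The only place where an error could realistically creep in — and thus the one point needing genuine care — is the bookkeeping of signs: one must pair the positive slope $m_+ = r-1$ of the right edge $E_+$ with the $+\delta\cdot\overline{x}$ expression and the negative slope $m_- = 1-r$ with the $-\delta\cdot\overline{x}$ expression, which is precisely the pairing prescribed by the \emph{concave}-corner definition (and is the opposite of the convex-corner definition in Definition~\ref{definition convex corner}). Once this matching is made correctly, the verification reduces to the two one-line parametrization identities above.
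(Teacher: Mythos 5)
Your proof is correct and follows exactly the route the paper intends: the paper's own proof is the one-line instruction ``Follow Definition~\ref{definition concave corner} of a concave corner,'' and your argument simply carries out that verification in full, parametrizing the two polygon edges meeting at $(0|1)$ and matching them to the two required unit-vector identities with the correct $m_\pm$ pairing. Nothing is missing; you have merely supplied the details the paper leaves implicit.
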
 
     \begin{proof}  Follow Definition  \ref{definition concave corner} of a concave corner.    
     \end{proof}   
     \begin{proposition}  \quad   \it    Here we consider the special angle  $ \angle_{\bf 0} $. \\
             Let  the pair \ $ ( X , \| \cdot \| \   ) $  \ \rm  be a  \ $ \mathsf{BW}$ space, \   
             \it   let  \  $ \widehat{y} \in X $  \ \ be a  concave   corner.   \  Then  the triple
             \ $ ( X , \| \cdot \|, < . \: | \: . >_{\bf 0} ) $  \  does  not fulfil the   {\sf CSB } inequality,
             i.e. \  $ ( X, \| \cdot \| )  \notin \mathsf{pdBW}_{\bf 0} $. 
      \end{proposition}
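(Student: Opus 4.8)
The plan is to exhibit a single pair of unit vectors whose $\mathbf{0}$-product exceeds $1$, which immediately contradicts the {\sf CSB }inequality for $\varrho = 0$. Using the data of the concave corner from Definition \ref{definition concave corner}, namely the vector $\overline{x} \in X$ and the numbers $m_- < m_+$, I would set, for a parameter $\delta \in (0,1]$,
$$ \vec{v} := \delta \cdot \overline{x} + ( 1 + \delta \cdot m_+ ) \cdot \widehat{y}, \qquad
   \vec{w} := -\delta \cdot \overline{x} + ( 1 - \delta \cdot m_- ) \cdot \widehat{y} \ . $$
By the defining property of a concave corner these are unit vectors, so that $ {\bf s} = \|\vec{v} + \vec{w}\|$ and $ {\bf d} = \|\vec{v} - \vec{w}\|$ (the normalisation is trivial), and since $\varrho = 0$ the product collapses to $ < \vec{v} \: | \: \vec{w} >_{\bf 0} = \|\vec{v}\| \cdot \|\vec{w}\| \cdot \frac{1}{4} \cdot {\bf \Delta} = \frac{1}{4} \cdot {\bf \Delta}$. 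The whole task therefore reduces to showing $ \frac{1}{4} \cdot {\bf \Delta} > 1 $ for a suitable $\delta$.

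The key computation is the explicit evaluation of $\vec{v} + \vec{w}$ and $\vec{v} - \vec{w}$. The $\overline{x}$-components cancel in the sum, giving $\vec{v} + \vec{w} = ( 2 + \delta \cdot (m_+ - m_-) ) \cdot \widehat{y}$, while $\vec{v} - \vec{w} = \delta \cdot ( 2 \cdot \overline{x} + (m_+ + m_-) \cdot \widehat{y} )$. Using $\|\widehat{y}\| = 1$ I would obtain $ {\bf s}^2 = ( 2 + \delta \cdot (m_+ - m_-) )^2 $ and $ {\bf d}^2 = \delta^2 \cdot \| 2 \cdot \overline{x} + (m_+ + m_-) \cdot \widehat{y} \|^2 $, hence, after expanding the square,
$$ \frac{1}{4} \cdot {\bf \Delta} \ = \ 1 + \delta \cdot (m_+ - m_-) + \frac{1}{4} \cdot \delta^2 \cdot \mathsf{K}_- \ , $$
with $\mathsf{K}_- := (m_+ - m_-)^2 - \| 2 \cdot \overline{x} + (m_+ + m_-) \cdot \widehat{y} \|^2 $, exactly the constant already used in the proof of Proposition \ref{proposition irgendwelche}.

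Since $m_- < m_+$, the linear coefficient $m_+ - m_-$ is strictly positive, so the linear term dominates the quadratic term as $\delta \searrow 0$. Concretely I would choose $\delta > 0$ small enough that $ \delta \cdot (m_+ - m_-) + \frac{1}{4} \cdot \delta^2 \cdot \mathsf{K}_- > 0 $, which forces $ \frac{1}{4} \cdot {\bf \Delta} > 1 $ and hence $ < \vec{v} \: | \: \vec{w} >_{\bf 0} > 1 = \|\vec{v}\| \cdot \|\vec{w}\| $. This violates the {\sf CSB }inequality, so $( X, \| \cdot \| ) \notin \mathsf{pdBW}_{\bf 0}$.

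I do not expect a genuine obstacle; the proposition is essentially a sign observation, and the one point worth flagging is the contrast with the convex-corner case of Proposition \ref{proposition irgendwelche}. There the analogous linear coefficient equals $m_- - m_+ < 0$, so $\frac{1}{4} \cdot {\bf \Delta}$ falls \emph{below} $1$ and one is forced to raise the $ {\bf \Sigma}$-factor to a large negative power in order to breach the bound. For a concave corner the sign of this linear term is reversed, so already the bare difference $\frac{1}{4} \cdot {\bf \Delta}$ --- that is, the exponent $\varrho = 0$ --- overshoots $1$, and no manipulation of $ {\bf \Sigma}$ is needed.
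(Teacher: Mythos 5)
Your proposal is correct and follows essentially the same route as the paper's own proof: the same pair of unit vectors $\vec{v}, \vec{w}$ built from the concave-corner data, the same expansion yielding $<\vec{v} \,|\, \vec{w}>_{\bf 0} \ = \ 1 + \delta\cdot(m_+ - m_-) + \tfrac{1}{4}\cdot\delta^{2}\cdot\mathsf{K}$, and the same observation that the positive linear coefficient $m_+ - m_- > 0$ forces the product above $1$ for small $\delta > 0$. The only differences are cosmetic (you compute ${\bf s}^{2}$ and ${\bf d}^{2}$ separately before subtracting, while the paper expands the product directly), so nothing further is needed.
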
 
     \begin{proof}  \quad   We use the vectors \  $ \widehat{y}, \ \overline{x} $ \ from the above definition 
          of a concave corner,  and  then  for all \ $ \delta \in [ 0, 1 ] $ \ we take the unit vectors \ 
          $ \vec{v} := \delta \cdot \overline{x}+ ( 1 + \delta \cdot m_+ ) \cdot  \widehat{y}$ \ and \
          $ \vec{w} := -\delta \cdot \overline{x} + ( 1 - \delta \cdot m_- ) \cdot \widehat{y}$, \ and  we compute 
          \begin{align}
                 < \vec{v} \: | \:  \vec{w} >_{\bf 0} \ &  =  \ 
                 <  \delta \cdot \overline{x}+ ( 1 + \delta \cdot m_+ ) \cdot  \widehat{y} \ \: | 
                 \:  -\delta \cdot \overline{x} + ( 1 - \delta \cdot m_- ) \cdot \widehat{y} >_{\bf 0}   \\
                 &  =   \     \frac{1}{4} \cdot 1 \cdot 1 \cdot 
                 \left[  \ \left\| \ [ 2 + \delta \cdot ( m_+ - m_- )] \cdot \widehat{y} \ \right\|^{2} \  - 
                 \ \left\| \   2 \cdot    \delta \cdot \overline{x} + \delta \cdot ( m_+ + m_- ) \cdot 
                 \widehat{y} \ \right\|^{2} \  \right]   \\
                 & =   \    \frac{1}{4} \cdot   
                 \left[  \  [ 2 + \delta \cdot ( m_+ - m_- )]^{2} \cdot  \left\| \ \widehat{y} \ \right\|^{2} \  - 
                 \ \delta^{2} \cdot \left\| \   2 \cdot \overline{x} +  ( m_+ + m_- ) \cdot 
                 \widehat{y} \ \right\|^{2} \  \right]   \\
                 & =  \   1 + \delta \cdot ( m_+ - m_- ) + \frac{1}{4} \cdot \delta^{2} \cdot 
                 \left[ \ ( m_+ - m_- )^{2} \ -  \   \left\| \ 2 \cdot \overline{x} +  ( m_+ + m_- ) \cdot 
                 \widehat{y} \ \right\|^{2} \ \right]        \\  
                 & = \   1 + \delta \cdot ( m_+ - m_- ) + \frac{1}{4} \cdot \delta^{2} \cdot \mathsf{K} \, , \quad  
                 \text{ if we define the real constant} \ \ \mathsf{K} \ \ \text {by }
         \end{align}           
         \centerline{ $ \mathsf{K} \ :=  \ ( m_+ - m_- )^{2} \ -  \ \left\| \ 2 \cdot \overline{x} + 
                 ( m_+ + m_- ) \cdot \widehat{y} \ \right\|^{2} $.  }     \\  \\
                  This calculation holds for all \ $ \delta \in   [ 0, 1 ] $.  \  Hence, because of \ 
                  $  m_+ - m_- > 0 $, \  there is a positive but very small \ $ \widetilde{\delta}$ \ such that 
                  for the two unit vectors   
          $$  \widetilde{v} := \widetilde{\delta} \cdot \overline{x}+ ( 1 +  \widetilde{\delta} \cdot m_+ ) \cdot  
              \widehat{y}   \quad \ \text{and}  \quad \ \widetilde{w} := - \widetilde{\delta} \cdot \overline{x} + 
              ( 1 -  \widetilde{\delta} \cdot m_- ) \cdot \widehat{y} \  $$   
           we get \ $ < \widetilde{v} | \widetilde{w} >_{\bf 0} \ > 1$, \ i.e. the { \sf CSB } inequality is
           not satisfied and the angle $ \angle _{\bf 0} (\widetilde{v}, \widetilde{w})$ does not exist. 
           It follows \  $ ( X, \| \cdot \| )  \notin \mathsf{pdBW}_{\bf 0} $. 
      \end{proof}
      \begin{corollary}   \it     
             For all \  $ r > 1 $  \ the space \  
             $ \left( \mathbbm{R}^{2} ,  \| \cdot \|_{{ \tt hexagon},r} , < . \: | \: . >_{\bf 0} \right) $ \ \
             does not fulfil the \  { \sf CSB } inequality. \ Hence, there are vectors \ 
             $ \vec{v} \neq \vec{0} \neq \vec{w}$ \ such that  the   angle $ \angle_{\bf 0} (\vec{v}, \vec{w}) $ \ 
             is not defined.  Hence, for \ $ r > 1 $ \ it means 
             \ $( \mathbbm{R}^{2}, \| \cdot \|_{{ \tt hexagon},r} ) \notin \mathsf{pdBW}_{\bf 0} $.  
      \end{corollary}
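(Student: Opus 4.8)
The plan is to derive this corollary immediately by composing the two results that precede it. Neither step requires genuinely new work; the entire content is the observation that the Lemma's conclusion supplies exactly the hypothesis demanded by the Proposition.

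First I would fix an arbitrary $ r > 1 $ and invoke the immediately preceding Lemma, which asserts that the space $ \left( \mathbbm{R}^{2} ,  \| \cdot \|_{{ \tt hexagon},r}\right) $ possesses a concave corner at $ \widehat{y} = (0|1) $, with companion vector $ \overline{x} = (1|0) $ and slopes $ m_- = 1-r < 0 < m_+ = r-1 $. These four data are precisely what Definition \ref{definition concave corner} of a concave corner requires, so the space satisfies the hypothesis of the foregoing Proposition on $ \angle_{\bf 0} $.

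Second, I would apply that Proposition, whose statement is that any $ \mathsf{BW} $ space carrying a concave corner fails the {\sf CSB} inequality for the product $ < . \: | \: . >_{\bf 0} $, i.e. lies outside $ \mathsf{pdBW}_{\bf 0} $. Applied to $ \left( \mathbbm{R}^{2} ,  \| \cdot \|_{{ \tt hexagon},r}\right) $ this yields the first assertion of the corollary directly. For the remaining assertions I would note that the Proposition is proved by explicitly producing unit vectors $ \widetilde{v}, \widetilde{w} $ with $ < \widetilde{v} \: | \: \widetilde{w} >_{\bf 0} > 1 $; these serve as the promised $ \vec{v} \neq \vec{0} \neq \vec{w} $ for which $ \angle_{\bf 0}(\vec{v}, \vec{w}) $ is undefined, and the exclusion $ ( \mathbbm{R}^{2}, \| \cdot \|_{{ \tt hexagon},r} ) \notin \mathsf{pdBW}_{\bf 0} $ is merely a restatement of the {\sf CSB} failure.

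There is essentially no obstacle here: the only point to verify is that the slopes and vectors furnished by the Lemma satisfy the two unit-length identities in the concave-corner definition, and the Lemma has already checked this. If a self-contained proof were preferred, one could instead substitute $ \overline{x} = (1|0) $, $ m_\pm = \pm(r-1) $, $ \widehat{y} = (0|1) $ into the Proposition's computation $ < \vec{v} \: | \: \vec{w} >_{\bf 0} = 1 + \delta \cdot ( m_+ - m_- ) + \frac{1}{4} \cdot \delta^{2} \cdot \mathsf{K} $; since $ m_+ - m_- = 2(r-1) > 0 $, this value exceeds $ 1 $ for all sufficiently small $ \delta > 0 $, again forcing the {\sf CSB} inequality to fail. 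As this only specializes the Proposition already in hand, quoting the Proposition is the cleaner route.
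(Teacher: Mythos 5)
Your proposal is correct and is exactly the argument the paper intends: the corollary is stated without its own proof precisely because it follows immediately by combining the preceding Lemma (which exhibits the concave corner at $\widehat{y}=(0|1)$ with $\overline{x}=(1|0)$, $m_\pm = \pm(r-1)$) with the preceding Proposition (a concave corner forces failure of the {\sf CSB} inequality for $<.\,|\,.>_{\bf 0}$, hence exclusion from $\mathsf{pdBW}_{\bf 0}$). Your optional self-contained variant, substituting the concrete data into the computation $<\vec{v}\,|\,\vec{w}>_{\bf 0} = 1 + \delta\cdot(m_+-m_-) + \frac{1}{4}\cdot\delta^{2}\cdot\mathsf{K}$, is likewise sound, since $m_+-m_- = 2(r-1)>0$ makes the product exceed $1$ for small $\delta>0$.
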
 
     \begin{proposition}   \label{proposition zehn} 
          Let \ $ \alpha, \beta $ \ be two real numbers with \ $ \alpha < -1 < \beta $. \ We have the proper inclusions
          \\ \centerline{ $\mathsf{pdBW}_{\bf \alpha} \subset \mathsf{pdBW}  \supset \mathsf{pdBW}_{\bf \beta} $ . }        \end{proposition}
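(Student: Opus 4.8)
The two inclusions themselves are trivial: by the very definition of the subclasses, both $\mathsf{pdBW}_{\bf \alpha}$ and $\mathsf{pdBW}_{\bf \beta}$ are carved out of $\mathsf{pdBW}$, so $\mathsf{pdBW}_{\bf \alpha}\subseteq\mathsf{pdBW}\supseteq\mathsf{pdBW}_{\bf \beta}$ holds automatically. Hence the entire content of the statement is that both inclusions are \emph{proper}. The plan is to treat the two sides $\alpha<-1$ and $\beta>-1$ independently, in each case exhibiting a space that lies in $\mathsf{pdBW}$ but is missing from the respective subclass.

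For the left inclusion, where $\alpha<-1$, I would take the normed space $\left(\mathbbm{R}^{2},\|\cdot\|_{1}\right)$. It is a normed space and therefore lies in $\mathsf{pdBW}$, and it carries a convex corner, for instance at $(0|1)$. By Proposition \ref{proposition irgendwelche} a convex corner forces $\nu=-1$, where $\Upsilon\left(\mathbbm{R}^{2},\|\cdot\|_{1}\right)=(\nu,\mu)$. Since $\nu$ is by definition the infimum of the exponents $\kappa$ for which the space has the angle $\angle_{\bf \kappa}$, and since $\alpha<-1=\nu$ lies strictly below that infimum, $\alpha$ cannot be one of those exponents. Thus $\left(\mathbbm{R}^{2},\|\cdot\|_{1}\right)\in\mathsf{pdBW}\setminus\mathsf{pdBW}_{\bf \alpha}$, which makes the left inclusion proper.

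For the right inclusion, where $\beta>-1$, I would use a space with a \emph{concave} corner, concretely the space $\left(\mathbbm{R}^{2},\|\cdot\|_{{\tt hexagon},r}\right)$ with $r>1$, which lies in $\mathsf{pdBW}$ and has a concave corner at $\widehat{y}=(0|1)$ with $\overline{x}=(1|0)$ and $m_-=1-r<0<m_+=r-1$. The plan is to extend the computation already carried out for $\angle_{\bf 0}$ at a concave corner to an arbitrary exponent $\beta>-1$. With the unit vectors $\vec{v}:=\delta\cdot\overline{x}+(1+\delta\cdot m_+)\cdot\widehat{y}$ and $\vec{w}:=-\delta\cdot\overline{x}+(1-\delta\cdot m_-)\cdot\widehat{y}$ for small $\delta>0$, the same expansion as in that proof yields
$$ \mathsf{T}:=\frac{1}{4}\cdot{\bf \Delta}=1+\delta\cdot(m_+-m_-)+O(\delta^{2}),\qquad \mathsf{B}:=\frac{1}{4}\cdot{\bf \Sigma}=1+\delta\cdot(m_+-m_-)+O(\delta^{2}), $$
so that $<\vec{v}\:|\:\vec{w}>_{\bf \beta}=\mathsf{T}\cdot\mathsf{B}^{\bf \beta}$ with $\mathsf{T},\mathsf{B}>1$ for small $\delta>0$. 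Passing to logarithms,
$$ \log\!\left(\mathsf{T}\cdot\mathsf{B}^{\bf \beta}\right)=\log\mathsf{T}+\beta\cdot\log\mathsf{B}=(1+\beta)\cdot\delta\cdot(m_+-m_-)+O(\delta^{2}). $$
Because $1+\beta>0$ and $m_+-m_->0$, this leading term is positive, so for a sufficiently small $\widetilde{\delta}>0$ one gets $<\vec{v}\:|\:\vec{w}>_{\bf \beta}>1=\|\vec{v}\|\cdot\|\vec{w}\|$. The {\sf CSB} inequality then fails and $\angle_{\bf \beta}$ is undefined for this pair, so $\left(\mathbbm{R}^{2},\|\cdot\|_{{\tt hexagon},r}\right)\in\mathsf{pdBW}\setminus\mathsf{pdBW}_{\bf \beta}$, making the right inclusion proper as well.

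The main obstacle, and the only place where care is needed, is the range $-1<\beta<0$: there $\mathsf{B}^{\bf \beta}$ is a \emph{decreasing} function of $\mathsf{B}$, so the crude estimate $\mathsf{T}\cdot\mathsf{B}^{\bf \beta}>1$ that works immediately for $\beta\geq0$ no longer applies. This is exactly why I would pass to logarithms and read off the sign of the leading coefficient $(1+\beta)(m_+-m_-)$; its positivity persists precisely as long as $\beta>-1$, which is the hypothesis. This is the same first-order device (equivalently, L'Hospital's rule giving $\log\mathsf{T}/\log\mathsf{B}\to1$ as $\delta\searrow0$) already used in the proof of Proposition \ref{proposition irgendwelche} and in the concave-corner statement for $\angle_{\bf 0}$, so no genuinely new technique is required beyond verifying that the estimate remains valid uniformly for every $\beta>-1$.
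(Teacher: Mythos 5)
Your proof is correct; the two halves compare differently with the paper. For the left inclusion ($\alpha<-1$) you argue exactly as the paper does: a normed space with a convex corner (you take $\|\cdot\|_{1}$, the paper takes $\|\cdot\|_{\infty}$) lies in $\mathsf{pdBW}$, while Proposition \ref{proposition irgendwelche} excludes it from $\mathsf{pdBW}_{\bf \alpha}$; your infimum reading of $\nu=-1$ is a valid way to draw that conclusion. For the right inclusion your route is genuinely different. The paper also uses a hexagon space with $r>1$, but it takes the two far endpoints $\vec{v}=(1|r)$, $\vec{w}=(-1|r)$ of the flat edge, computes $<\vec{v}\:|\:\vec{w}>_{\bf -\varrho}=(r^{2}-1)/(r^{2}+1)^{\bf \varrho}$ in closed form, and lets $r\to\infty$; consequently the witnessing space depends on $\beta$, and the construction as written only settles $-1<\beta<0$, the case $\beta\geq 0$ being left implicitly to the monotonicity of Theorem \ref{erstes theorem} (or to the separate $\angle_{\bf 0}$ concave-corner proposition). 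You instead fix a single hexagon space, work locally at the concave corner with $\delta\searrow 0$, and read off the sign of $\log\mathsf{T}+\beta\cdot\log\mathsf{B}=(1+\beta)\cdot\delta\cdot(m_{+}-m_{-})+O(\delta^{2})$; this mirrors precisely the paper's own proof of Proposition \ref{proposition irgendwelche} on the convex side, and it shows in one stroke that a concave corner rules out the angle $\angle_{\bf \beta}$ for \emph{every} $\beta>-1$, so one fixed space witnesses properness uniformly in $\beta$ (equivalently, a concave corner forces $\mu=-1$, the exact dual of the convex-corner statement). What the paper's computation buys is complete elementarity --- an explicit closed form, no asymptotic expansion --- at the price of a $\beta$-dependent space and an explicitly covered range that misses $\beta\geq 0$; what yours buys is uniformity and a cleaner structural lemma. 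Your care in the range $-1<\beta<0$, where $\mathsf{B}^{\bf \beta}$ is decreasing in $\mathsf{B}$, is exactly the right point to flag, and the logarithmic argument handles it soundly since $\mathsf{T},\mathsf{B}>1$ for small $\delta>0$ and the linear term dominates the quadratic error.
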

      \begin{proof} From Proposition \ref{proposition irgendwelche} we know  \
           $\mathsf{pdBW}_{\bf \alpha} \neq  \mathsf{pdBW}$. For instance, the H\"older norm  
           $\| \cdot \|_{\infty}$ on $\mathbbm{R}^{2}$ has convex corners, hence it follows \ 
           $ (\mathbbm{R}^{2}, \| \cdot \|_{\infty}) \notin \mathsf{pdBW}_{\bf \alpha} $, but 
           $ (\mathbbm{R}^{2}, \| \cdot \|_{\infty}) \in \mathsf{pdBW}_{-1}$.    
                                                                        
           Now we consider $ -1 < \beta $.  Let us take the spaces $( \mathbbm{R}^{2}, \| \cdot \|_{{ \tt hexagon},r})$
           which are defined above. The balanced weight $\| \cdot \|_{{ \tt hexagon},r}$ is not a norm if $ r > 1$,
           since it has a concave corner at $ (0|1)$.  We take the unit vectors $\vec{v} := (1|r)$  and 
           $\vec{w} := (-1|r)$.  We compute $ < \vec{v} \: | \: \vec{w} >_{\bf -\varrho}$  \ for an arbitrary 
           positive number $ {\bf \varrho}$, \ i.e. $ -{\bf \varrho} < 0 $, \ and we get 
           \begin{align}
                 < \vec{v} \: | \:  \vec{w} >_{\bf -\varrho}  \ \  
                  =  & \ \ \|\vec{v}\| \cdot \|\vec{w}\| \cdot \frac{1}{4} \cdot  { \bf \Delta} \cdot
                  \left(  \frac{1}{4} \cdot  { \bf \Sigma}   \right)^{\bf -\varrho} \\
                  =  & \ \ 1 \cdot 1 \cdot
                  \frac{1}{4} \cdot \left( { \bf s}^{2} - { \bf d}^{2} \right)
                  \cdot \left( \frac{1}{4} \cdot \left( {\bf s}^{2} + {\bf d}^{2} \right) \right)^{\bf -\varrho} \\                       =  & \  \ \frac{1}{4} \cdot \left[ (2 \cdot r)^{2} - 2^{2} \right] \cdot 
                      \left( \frac{1}{4} \cdot \left( (2 \cdot r)^{2} + 2^{2} \right) \right)^{\bf -\varrho} \\
                  =  & \ \ ( r^{2} - 1 ) \cdot ( r^{2} + 1 )^{\bf -\varrho} \
                  = \ \ \frac{r^{2} - 1 } {( r^{2} + 1 )^{\bf \varrho} } \ .   
           \end{align}  
           We assume the inequality \ $ < \vec{v} \: | \:  \vec{w} >_{\bf -\varrho} \ \ > 1 $. This is equivalent to
           $ ( r^{2} - 1 ) > ( r^{2} + 1 )^{\bf \varrho} $, \ and also to 
       \begin{align}    \label{ungleichung fuenfzehn} 
           \frac{ \log( r^{2} - 1 )} {\log( r^{2} + 1 ) } > {\bf \varrho}  \ .  
       \end{align}          
           By the rules of L'Hospital we can  calculate the limit of the last term, and we get  
   $$   \lim_{ r \rightarrow \infty} \ \ \frac{ \log( r^{2} - 1 )} {\log( r^{2} + 1 ) }  \ = \ 1 \ . $$ 
           Hence for all \  $ 0 < {\bf \varrho} < 1$, i.e.  $ -1 < -{\bf \varrho} < 0$, \ we find a big number 
           $ R $  such that  Inequality (\ref{ungleichung fuenfzehn}) is fulfilled with $ r := R $. 
           This means \ $ < \vec{v} \: | \:  \vec{w} >_{\bf -\varrho} \ \ > 1 $, \
           i.e. that the angle $ \angle_{\bf -\varrho}(\vec{v}, \vec{w})$ does not exist in \
            $( \mathbbm{R}^{2}, \| \cdot \|_{{ \tt hexagon},R})$. \ We get that the space
           $( \mathbbm{R}^{2}, \| \cdot \|_{{ \tt hexagon},R})$ is not an element of $\mathsf{pdBW}_{\bf -\varrho}$.
           Since \  $( \mathbbm{R}^{2}, \| \cdot \|_{{ \tt hexagon},R})  \in \mathsf{pdBW}_{\bf -1} $ \
           we get that Proposition  \ref{proposition zehn} is proven.  
           
      \end{proof}
      \begin{proposition}   \label{proposition elf} 
          Let \ $ \alpha, \beta $ \ be two positive real numbers with \
          $ -\alpha < -1 < 1 < \beta $. \ We have the proper inclusions \\
          \centerline{ $\mathsf{NORM}_{\bf -\alpha} \subset \mathsf{NORM}  \supset \mathsf{NORM}_{\bf \beta} $ . }    
      \end{proposition}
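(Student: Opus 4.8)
The two inclusions themselves require no new work: they are instances of the chain already established in Corollary \ref{zweites Korollar} (both $\mathsf{NORM}_{\bf -\alpha}$ and $\mathsf{NORM}_{\bf \beta}$ are contained in $\mathsf{NORM}$ whenever $\alpha, \beta > 1$). Hence the entire content of the proposition is the \emph{properness} of the two inclusions, and the plan is to exhibit a single normed space lying in $\mathsf{NORM}$ but in neither $\mathsf{NORM}_{\bf -\alpha}$ nor $\mathsf{NORM}_{\bf \beta}$.

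For this witness I would take $(\mathbbm{R}^{2}, \| \cdot \|_{1})$. It is a normed space (since $1 \geq 1$), so it belongs to $\mathsf{NORM}$, and it has convex corners at the four vertices of its square unit sphere. A space with a convex corner is not strictly curved, so Proposition \ref{proposition  fuenf oder sechs} yields $\mu = 1$, while Proposition \ref{proposition irgendwelche} yields $\nu = -1$; together these give $\Upsilon(\mathbbm{R}^{2}, \| \cdot \|_{1}) = (-1,1)$.

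Now I would read off the two exclusions directly from $\nu$ and $\mu$. Because $\nu = -1$ is the infimum of the set of exponents for which $\angle_{\bf \varrho}$ exists on $(\mathbbm{R}^{2}, \| \cdot \|_{1})$, every admissible exponent satisfies $\varrho \geq -1$; since $-\alpha < -1$, the angle $\angle_{\bf -\alpha}$ does not exist, so $(\mathbbm{R}^{2}, \| \cdot \|_{1}) \in \mathsf{NORM} \setminus \mathsf{NORM}_{\bf -\alpha}$ and the first inclusion is proper. Symmetrically, $\mu = 1$ forces every admissible exponent to satisfy $\varrho \leq 1$; since $\beta > 1$, the angle $\angle_{\bf \beta}$ does not exist, so the same space lies in $\mathsf{NORM} \setminus \mathsf{NORM}_{\bf \beta}$ and the second inclusion is proper as well.

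I do not anticipate a genuine obstacle, since the substantive estimates (the L'Hospital limits bounding the admissible exponents) were already carried out in Propositions \ref{proposition irgendwelche} and \ref{proposition  fuenf oder sechs}. The only points worth a moment's attention are, first, the remark that a convex corner forces the failure of strict curvature, which is exactly what licenses the use of Proposition \ref{proposition  fuenf oder sechs}; and second, the observation that the value of the infimum (resp. supremum) alone already excludes every exponent strictly below $-1$ (resp. strictly above $1$), so that no separate monotonicity argument beyond Theorem \ref{erstes theorem} is needed to conclude that the witness fails to carry the angles $\angle_{\bf -\alpha}$ and $\angle_{\bf \beta}$.
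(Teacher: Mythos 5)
Your proposal is correct and follows essentially the same route as the paper: the paper's proof also exhibits a normed space with convex corners (it uses $(\mathbbm{R}^{2}, \| \cdot \|_{\infty})$ rather than your $(\mathbbm{R}^{2}, \| \cdot \|_{1})$, an immaterial difference) and invokes Proposition \ref{proposition  fuenf oder sechs} and Proposition \ref{proposition irgendwelche} to conclude that this space carries neither $\angle_{\bf -\alpha}$ nor $\angle_{\bf \beta}$. Your explicit remark that the values $\nu = -1$ and $\mu = 1$ alone (as infimum and supremum) already exclude all exponents outside $[-1,1]$ is a point the paper leaves implicit, but it is the same argument.
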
 
      \begin{proof}
             This follows directly from Proposition \ref{proposition  fuenf oder sechs} and
             Proposition  \ref{proposition irgendwelche}.
             For instance, the H\"older norm  $\| \cdot \|_{\infty}$ on $\mathbbm{R}^{2}$ has convex corners, hence it
             is not strictly convex. That means that the space $\left( \mathbbm{R}^{2}, \| \cdot \|_{\infty} \right)$
             neither is an element of $ \mathsf{NORM}_{\bf -\alpha} $, nor an element of  $ \mathsf{NORM}_{\beta}$. 
      \end{proof} 
    \section{Some Conjectures}
    	We formulate two open questions.  
    	\begin{conjecture} \label{drittes theorem}
        	Let us take four positive real  numbers \ $\alpha, \beta, \gamma , \delta $ \	with \\ 
	                \centerline{   $ -\delta < -\gamma < -1 <  1 < \alpha <  \beta $.}  \\
        	From Corollary \ref{zweites Korollar}  we know  \
	   $$ \mathsf{NORM}_{\bf -\delta} \subset   \mathsf{NORM}_{\bf -\gamma} \subset    \mathsf{NORM}
	                \supset  \mathsf{NORM}_{\bf \alpha} \supset  \mathsf{NORM}_{\bf \beta}    \  ,     $$  
	        and from Proposition  \ref{proposition elf}  we have   
	        $   \mathsf{NORM}_{\bf -\gamma} \neq  \mathsf{NORM}  \neq  \mathsf{NORM}_{\bf \alpha} $.   \      
	        We are convinced that in fact all four inclusions are proper, and we believe that all five
	        classes are different.                                      
    	\end{conjecture}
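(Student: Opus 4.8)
The plan is to route everything through the invariant $\Upsilon(X,\|\cdot\|)=(\nu,\mu)$ of Definition \ref{definition fuenf}. For a fixed unit pair $\vec u,\vec v$ write, as in Section \ref{section fuenf}, $S:=\frac14{\bf \Sigma}$ and $D:=\frac14|{\bf \Delta}|$, so that $\angle_{\bf \varrho}(\vec u,\vec v)$ exists iff $D\cdot S^{\bf \varrho}\le 1$. Since every normed space lies in $\mathsf{NORM}_{\bf -1}=\mathsf{NORM}_{\bf 1}$ (Proposition \ref{corollary eins}) and, by Theorem \ref{erstes theorem}, the set $\{{\bf \varrho}: (X,\|\cdot\|)\text{ has }\angle_{\bf \varrho}\}$ is an interval containing $[-1,1]$ whose endpoints $\nu\le -1\le 1\le\mu$ are attained whenever finite (the attainment proposition following Definition \ref{definition fuenf}), we obtain the clean equivalence
\[ (X,\|\cdot\|)\in\mathsf{NORM}_{\bf \varrho}\iff \nu\le{\bf \varrho}\le\mu . \]
Hence, for $1<\alpha<\beta$, a normed space witnesses $\mathsf{NORM}_{\bf \alpha}\supsetneq\mathsf{NORM}_{\bf \beta}$ precisely when $\alpha\le\mu<\beta$, and for $1<\gamma<\delta$ a space witnesses $\mathsf{NORM}_{\bf -\gamma}\supsetneq\mathsf{NORM}_{\bf -\delta}$ precisely when $-\delta<\nu\le-\gamma$. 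Thus both open inclusions are proper for \emph{all} admissible exponents as soon as the image of $\mu$ is dense in $(1,\infty)$ and the image of $\nu$ is dense in $(-\infty,-1)$, where the images are taken over all real normed spaces.

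The strategy for this density is a one--parameter interpolation together with the intermediate value theorem. I would use the H\"older norms $\|\cdot\|_p$ on $\mathbbm{R}^2$ for $1<p<2$. At the two ends the behaviour is already understood: $(\mathbbm{R}^2,\|\cdot\|_2)$ is an inner product space, so $\Upsilon=(-\infty,+\infty)$ by Theorem \ref{zweites theorem}, whereas $(\mathbbm{R}^2,\|\cdot\|_1)$ has convex corners, so $\nu=-1$ by Proposition \ref{proposition irgendwelche} and $\mu=1$ by Proposition \ref{proposition fuenf oder sechs}, i.e. $\Upsilon=(-1,1)$. The claim to establish is that $p\mapsto(\nu(\|\cdot\|_p),\mu(\|\cdot\|_p))$ is continuous on $(1,2)$ with $(\nu,\mu)\to(-1,1)$ as $p\to1^+$ and $(\nu,\mu)\to(-\infty,+\infty)$ as $p\to2^-$. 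Granting this, the intermediate value theorem forces $\mu$ to run through every value of $(1,\infty)$ and $\nu$ through every value of $(-\infty,-1)$, which is far more than the required density and settles both remaining inclusions of Conjecture \ref{drittes theorem}; combined with the already known strict inclusions $\mathsf{NORM}_{\bf -\gamma}\neq\mathsf{NORM}\neq\mathsf{NORM}_{\bf \alpha}$ of Proposition \ref{proposition elf}, all five classes are then distinct.

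To compute $\mu(\|\cdot\|_p)$ I would unwind the characterisation above into
\[ \mu(\|\cdot\|_p)=\inf\Big\{\tfrac{\log(1/D)}{\log S}\ :\ \vec u,\vec v\ \text{unit},\ S(\vec u,\vec v)>1,\ D(\vec u,\vec v)>0\Big\}, \]
and symmetrically $\nu(\|\cdot\|_p)=\sup\{\log(1/D)/\log S:\ S<1,\ D>0\}$. Two structural facts must be checked for $1<p<2$: that there exist unit pairs with $S>1,\ D>0$ (forcing $\mu<\infty$) and unit pairs with $S<1,\ D>0$ (forcing $\nu>-\infty$), so that the family genuinely lives in the interesting regime. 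Both should follow by testing the pair $\vec u=(1,0)$, $\vec v=2^{-1/p}(1,1)$ against small perturbations of the diagonal pair $(1,0),(0,1)$, whose sign of $S-1$ is governed by $2^{2/p-1}-1$. The quotient of the unit--pair space by the symmetries of Proposition \ref{proposition fvafav} is compact, and $S,D$ depend jointly continuously on $(p,\vec u,\vec v)$, which is the engine for the continuity of $\mu$ and $\nu$.

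The hard part will be exactly this continuity together with the two endpoint limits. The functional $\log(1/D)/\log S$ is singular along $\{D=0\}$ and along $\{S=1\}$, and one must show that the extremal pairs stay bounded away from this singular locus, uniformly for $p$ in compact subintervals of $(1,2)$, so that the infimum is attained and varies continuously; this is a compactness argument that has to be carried out with care, since the maximiser of $D\cdot S^{\bf \varrho}$ need not be unique. The limit $\mu\to1^+$ as $p\to1^+$ is the delicate end: it is the quantitative statement that the $\ell^p$ ball sharpens to a corner, and I expect it to be handled by the same L'Hospital mechanism already used in Proposition \ref{proposition fuenf oder sechs} and Proposition \ref{proposition zehn}, applied to a one--parameter family of near--corner unit pairs. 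Should the extremal analysis for $\|\cdot\|_p$ prove intractable in closed form, the same scheme goes through for any explicit convex interpolation between the square and the disc that carries a single, analytically controllable worst pair, since only the continuity of $(\nu,\mu)$ and the two limiting values are actually used.
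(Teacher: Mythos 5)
You are proposing a proof for what is, in the paper, an open problem: Conjecture \ref{drittes theorem} sits in the section ``Some Conjectures'', and the introduction states explicitly that the author has \emph{no proof} that the inclusions $\mathsf{NORM}_{\bf -\delta} \subset \mathsf{NORM}_{\bf -\gamma}$ and $\mathsf{NORM}_{\bf \alpha} \supset \mathsf{NORM}_{\bf \beta}$ are proper. So there is no proof in the paper to compare yours against; the only question is whether your proposal settles the conjecture, and it does not. To give credit where due: your structural reduction is sound. By Proposition \ref{corollary eins}, Theorem \ref{erstes theorem} and the attainment proposition following Definition \ref{definition fuenf}, the set of admissible exponents of a normed space is an interval containing $[-1,1]$ with endpoints $\nu,\mu$, and since $D\le 1$ in any normed space, the {\sf CSB} constraint $D\cdot S^{\bf \varrho}\le 1$ is binding only where you say it is, so your formulas $\mu=\inf\{\log(1/D)/\log S:\ S>1,\ D>0\}$ and $\nu=\sup\{\log(1/D)/\log S:\ S<1,\ D>0\}$ are correct; density of the realized values of $\mu$ in $(1,\infty)$ (and of $\nu$ in $(-\infty,-1)$) would indeed prove the conjecture. (Two small caveats: the paper proves attainment only for $\nu$, the claim for $\mu$ being left unproven; and you can avoid attainment entirely by witnessing properness with strict inequalities $\alpha<\mu<\beta$.) Your choice of the H\"older family is also exactly the paper's own suggestion of ``a supply of suitable examples to prove or disprove the above conjectures''.

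The genuine gap is that the entire burden of the conjecture is shifted onto three analytic claims about $(\mathbbm{R}^{2},\|\cdot\|_p)$, $1<p<2$ --- continuity of $p\mapsto(\nu(p),\mu(p))$, the limit $(-1,1)$ as $p\to 1^{+}$, and the limit $(-\infty,+\infty)$ as $p\to 2^{-}$ --- and none of them is established; you yourself flag them as ``the hard part''. These are not routine consequences of the compactness you invoke. The quantity $\mu(p)$ is an infimum of the functional $\log(1/D)/\log S$, which is singular along $\{D=0\}\cup\{S=1\}$, and the relevant pairs accumulate on that locus: as $p\to 2^{-}$ \emph{every} unit pair has $S\to 1$, so the hypothesis that extremal pairs stay uniformly away from the singular set is exactly what fails at the delicate end, and lower semicontinuity of the infimum (the half of continuity that Berge-type arguments do not give for free here) is unproven. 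Concretely, $\mu(p)\to\infty$ as $p\to2^{-}$ is equivalent to a uniform estimate $\log S\le \varepsilon_p\cdot\log(1/D)$ over all unit pairs, with $\varepsilon_p\to 0$, and $\mu(p)\to 1$ as $p\to1^{+}$ requires a matching reverse estimate for near-corner pairs; neither estimate is derived, and the L'Hospital computations in Propositions \ref{proposition  fuenf oder sechs} and \ref{proposition zehn} treat single one-parameter families in fixed spaces with exact flat pieces or corners, not families uniform in $p$. What your scheme yields with the paper's actual tools is only what is already known: $\Upsilon=(-1,1)$ at $p=1$ (Propositions \ref{proposition irgendwelche} and \ref{proposition  fuenf oder sechs}) and $\Upsilon=(-\infty,+\infty)$ at $p=2$ (Theorem \ref{zweites theorem}). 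Everything strictly in between --- even finiteness aside, any single value of $\mu(p)$ for one $p\in(1,2)$ --- remains uncomputed, so the conjecture stays open; your text is a reasonable research program, not a proof.
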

	    \begin{conjecture} \label{viertes theorem}
	          Let us assume four real  numbers \ $\alpha, \beta, \gamma , \delta$,
	          with \\ 
	          \centerline{   $ -\delta < -\gamma < -1 <  \alpha <  \beta $.}   \\
            We already know from Proposition  \ref{proposition zehn} the inequalities \
            $ \mathsf{pdBW}_{\bf -\gamma} \neq  \mathsf{pdBW}  \neq  \mathsf{pdBW}_{\bf \alpha}  $. 
            We believe that in fact we have four proper inclusions        
	     $$   \mathsf{pdBW}_{\bf -\delta} \subset   \mathsf{pdBW}_{\bf -\gamma} \subset    \mathsf{pdBW}
	          \supset  \mathsf{pdBW}_{\bf \alpha} \supset  \mathsf{pdBW}_{\bf \beta} \ , $$
	          and that all five classes are different, too.             
      \end{conjecture}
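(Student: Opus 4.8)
The plan is to reduce the conjecture to a realizability statement about the invariant $\Upsilon$ and then to attack that by interpolation. First I would record that, for a fixed $(X,\|\cdot\|)\in\mathsf{pdBW}$ with $\Upsilon(X,\|\cdot\|)=(\nu,\mu)$, the set of exponents for which the angle exists is exactly the interval $[\nu,\mu]$: Theorem \ref{erstes theorem} shows this set is order-closed toward $-1$ from both sides, Proposition \ref{corollary eins} ($\mathsf{pdBW}=\mathsf{pdBW}_{\bf -1}$) shows it contains $-1$, and the attainment proposition shows the finite endpoints belong to it. Hence $(X,\|\cdot\|)\in\mathsf{pdBW}_{\bf \varrho}\iff \varrho\in[\nu,\mu]$. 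The two inner strict inclusions $\mathsf{pdBW}_{\bf -\gamma}\neq\mathsf{pdBW}\neq\mathsf{pdBW}_{\bf \alpha}$ are already supplied by Proposition \ref{proposition zehn}, so only the two outer ones remain, and by the displayed criterion they reduce to: exhibit a space with $\mu\in[\alpha,\beta)$ (for the right inclusion) and a space with $\nu\in(-\delta,-\gamma]$ (for the left one).

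Second, I would construct continuous one-parameter families of $\mathsf{pdBW}$ spaces and apply the intermediate value theorem to the endpoint functions. For $\mu$ in the range $(-1,1)$ I would use the hexagon family $(\mathbbm{R}^{2},\|\cdot\|_{{\tt hexagon},r})$, $r>1$. As $r\searrow 1$ the weight tends to the H\"older norm $\|\cdot\|_{\infty}$, whose convex corners force $\mu=1$ by Proposition \ref{proposition  fuenf oder sechs} together with Proposition \ref{proposition irgendwelche}; as $r\to\infty$ the pair $\vec{v}=(1|r),\ \vec{w}=(-1|r)$ of Proposition \ref{proposition zehn} gives the bound $\mu(r)\le \frac{-\log(r^{2}-1)}{\log(r^{2}+1)}$, whose right side lies in $(-1,\infty)$ and tends to $-1$, so that together with $\mu\ge -1$ one gets $\mu(r)\to -1$. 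If $r\mapsto\mu(r)$ is continuous, the intermediate value theorem then yields every value in $(-1,1)$. Corner (normed) spaces supply $\mu=1$, and for $\mu\in(1,\infty)$ I would use a strictly convex family interpolating between the Euclidean disc ($\mu=+\infty$, by Theorem \ref{zweites theorem}) and a corner shape ($\mu=1$), for instance the H\"older norms $\|\cdot\|_{p}$ with $2<p<\infty$, or a localized perturbation of the circle. Symmetrically, for $\nu\in(-\infty,-1)$ I would use a strictly convex family whose lower endpoint runs from $\nu=-1$ at a corner limit to $\nu=-\infty$ at the Euclidean limit, the H\"older norms with $1<p<2$ being the natural candidates, and again close the range by the intermediate value theorem.

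The hard part will be the two inputs the interpolation argument silently assumes. The functions $r\mapsto\mu(r)$ and $p\mapsto\nu(p)$ are, by definition, global extrema of $\left|\tfrac14{\bf \Delta}\right|\cdot\left(\tfrac14{\bf \Sigma}\right)^{\bf \varrho}$ taken over \emph{all} pairs of unit vectors on the whole unit sphere, whereas the estimates in Proposition \ref{proposition zehn} and Proposition \ref{proposition elf} only test one cleverly chosen pair and therefore bound a single endpoint from one side. To run the argument I must (i) prove that $\Upsilon$, equivalently each endpoint function, depends continuously on the family parameter, and (ii) actually locate the extremal pair, or bracket the extremum tightly enough, both to compute the limiting endpoint values and to rule out that some other pair pins $\mu$ or $\nu$ at a value skipping over the target interval. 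This genuine two-variable optimization over the sphere, uniform in the parameter, is precisely the analysis the paper does not carry out, and it is the reason the statement is presented only as a conjecture.
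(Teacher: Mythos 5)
The statement you are addressing is a \emph{conjecture}: the paper offers no proof of it, only the two inner inequalities $\mathsf{pdBW}_{\bf -\gamma}\neq\mathsf{pdBW}\neq\mathsf{pdBW}_{\bf \alpha}$ supplied by Proposition \ref{proposition zehn}. Your opening reduction is sound and fully consistent with the paper's machinery: by Theorem \ref{erstes theorem}, Proposition \ref{corollary eins}, and the (unlabelled) attainment proposition, the set of admissible exponents of a fixed space is an interval $[\nu,\mu]$ containing $-1$, so the two outer proper inclusions reduce to exhibiting a space with $\mu\in[\alpha,\beta)$ and a space with $\nu\in(-\delta,-\gamma]$. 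You also candidly flag that your interpolation argument needs (i) continuity of the endpoint functions along the chosen families and (ii) control of the extremum of $\left|\tfrac14{\bf \Delta}\right|\cdot\left(\tfrac14{\bf \Sigma}\right)^{\bf \varrho}$ over \emph{all} pairs of unit vectors, neither of which you establish. So what you have is a strategy outline, not a proof, and the conjecture remains open after your proposal.

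Beyond the gaps you admit, one concrete step would actually fail. You claim the hexagon family $(\mathbbm{R}^{2},\|\cdot\|_{{\tt hexagon},r})$, $r>1$, together with the intermediate value theorem, realizes every $\mu\in(-1,1)$, using ``$\mu\to 1$ as $r\searrow 1$'' at the corner limit. But the paper's own lemma and proposition on concave corners show that for \emph{every} $r>1$ this space has a concave corner at $(0|1)$ and therefore does not lie in $\mathsf{pdBW}_{\bf 0}$; hence $\mu(r)<0$ for all $r>1$, while $\mu(1)=1$. So $r\mapsto\mu(r)$ jumps at $r=1$, and no choice of $r>1$ can ever produce $\mu\in[0,1)$: the family misses exactly the upper half of the range you want it to cover, and the discontinuity at the endpoint is not a technicality you can wave away but a structural feature detected by the paper itself. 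Likewise, the claims that the H\"older norms sweep out $\mu\in(1,\infty)$ for $2<p<\infty$ and $\nu\in(-\infty,-1)$ for $1<p<2$ have no support in the paper, which computes $\Upsilon$ only for inner product spaces (giving $(-\infty,+\infty)$ by Theorem \ref{zweites theorem}) and for normed spaces with a convex corner (giving $(-1,1)$ by Propositions \ref{proposition  fuenf oder sechs} and \ref{proposition irgendwelche}). Even as a conditional reduction, your route would need a different family for the missing ranges, together with the continuity and global-optimization analysis whose absence you correctly identify as the reason the statement is only a conjecture.
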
     
      In Section  \ref{section three} \ `Some Examples of Balancedly Weighted Vector Spaces'  we defined  
      the set of balanced weights  $\| \cdot \|_{p}$ on $ \mathbbm{R}^{2}$,  the { \it H\"older weights }. 
      We use here only positive $ p $, for $\vec{x} = (x , y ) \in  \mathbbm{R}^{2} $  we set 
      $\|\vec{x}\|_{p} := \sqrt[p]{|x|^{p}+|y|^{p}}$.  
      For such  $ p > 0$ the weight $ \| \cdot \|_{p} $ is positive definite.  
      The pairs $ (\mathbbm{R}^{2}, \| \cdot \|_{p}) $ may be a supply of suitable examples to prove or disprove the 
      above conjectures.     
      $ $   \\   \\  
      Now we we say something about finite products of  $ \mathsf{BW}$ spaces, and we ask interesting questions. 
      We just have mentioned the H\"older weights, defined in third section.
      The method we used there can be generalized to construct products. Note that  we restrict our description 
      to products  with have only two factors. But this can be extended to a finite number of factors very easily.  
                                    
   Assume two real vector spaces $ A, B $ provided with a balanced weight, i.e. we have two $ \mathsf{BW}$ spaces
      $ \left(A , \| \cdot \|_A \right), (B , \| \cdot \|_B) $, both spaces are not necessarily positive definite.
      Let $p$ be any element from  the extended real numbers, i.e. $ p \in \mathbbm{R} \cup \{ -\infty , +\infty \}$. 
      If $ A \times B $ denotes the usual  cartesian product of the vector spaces $A$ and $B$, we define a 
      balanced weight $ \| \cdot \|_{p} $ for   $ A \times B $. If \ $p$ \ is a positive real number
      we define (corresponding to the definition in the third section)
      for an element  $ \left(\vec{a}, \vec{b} \right) \in A \times B$ \ the real numbers
   \begin{align}   
     \left\|\left(\vec{a}, \vec{b}\right) \right\|_{p}   \ := \ &
     \sqrt[p]{\|\vec{a} \|_{A}^{p} +  \|\vec{b} \|_{B}^{p}} \qquad \text{for the positive number { \it p}, \ and} \\ 
     \left\|\left(\vec{a}, \vec{b}\right) \right\|_{-p} \ := \ &            
             \begin{cases}
             \sqrt[-p]{ \|\vec{a}\|_A^{-p} + \|\vec{b}\|_B^{-p}}   &   
             \quad \mbox{if} \quad  \left\|\vec{a} \right\|_A \cdot  \| \vec{b} \|_B  \neq 0   \\    
             \ \ 0 & \quad \mbox{if} \quad  \left\|\vec{a} \right\|_A \cdot \| \vec{b} \|_B = 0 \ .   \\                                                                 \end{cases}      
     \end{align}              
        To make the definition complete we set \
        $  \left\|\left(\vec{a}, \vec{b}\right) \right\|_{0}  := 0 $, \ and   
      $$ \left\|\left(\vec{a}, \vec{b}\right) \right\|_{\infty}  
                    :=  \max \left\{  \left\| \vec{a} \right\|_{A} , \| \vec{b} \|_{B} \right\} \ , \quad
         \left\|\left(\vec{a}, \vec{b}\right) \right\|_{-\infty}  
                    :=  \min \left\{  \left\| \vec{a} \right\|_{A} ,  \| \vec{b} \|_{B} \right\} \ .    $$    
                                                  
    It is easy to verify some properties of $ \| \cdot \|_{p} $. For instance, the weight 
        $ \| \cdot \|_{p} $ is positive definite if and only if $ p > 0$ and  both $\| \cdot \|_A$ and  
        $\| \cdot \|_B$ are positive definite. Further, the pair \ $ ((A \times B) ,  \| \cdot \|_{p}) $ \
        is a   normed space if and only if $ p \geq 1$ and  both $\| \cdot \|_A$ and  $\| \cdot \|_B$ are norms. 
        Further,  the pair \ $ ((A \times B) ,  \| \cdot \|_{p}) $ \ is an inner product space if and only
        if \ $ p = 2 $ \ and both $\| \cdot \|_A$ and  $\| \cdot \|_B$ \ are inner products.   \\
                                
   The next conjecture deals with a more intricate problem.   
   \begin{conjecture}                                                  
        We take four real vector spaces provided with a positive definite balanced weight, i.e. we have \
        $ (A , \| \cdot \|_A), (B , \| \cdot \|_B), (C , \| \cdot \|_C), (D , \| \cdot \|_D) \in  \mathsf{pdBW}$. 
        \  Let us assume the identities   
        $$ \Upsilon \left(A , \| \cdot \|_A \right) =   \Upsilon( C , \| \cdot \|_C ) \quad \text{and}  \quad
           \Upsilon( B , \| \cdot \|_B ) =   \Upsilon( D , \| \cdot \|_D ) . $$ 
        Then we conjecture that    
     $$ \Upsilon( (A \times B) ,  \| \cdot \|_{p} ) \ = \  
           \Upsilon( (C \times D) ,  \| \cdot \|_{p} ) \quad \text{holds for an arbitrary} \ \ p > 0 .  $$                       \end{conjecture}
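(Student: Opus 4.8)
The plan is to reduce $\Upsilon$ to a single scalar optimization and then to read off the product's interval from those of the factors. For unit vectors $\vec{u},\vec{v}$ write $\sigma := \tfrac14{\bf \Sigma}$ and $\tau := \tfrac14|{\bf \Delta}|$, so that $0<\tau\leq\sigma$ and the CSB condition at exponent ${\bf\varrho}$ becomes $\tau\cdot\sigma^{\bf\varrho}\leq 1$. Set $g_X({\bf\varrho}):=\sup\{\tau\,\sigma^{\bf\varrho}\}$, the supremum taken over all pairs of unit vectors of $X$; then $(X,\|\cdot\|)$ has the angle $\angle_{\bf\varrho}$ precisely when $g_X({\bf\varrho})\leq 1$. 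Since $\log g_X({\bf\varrho})=\sup(\log\tau+{\bf\varrho}\log\sigma)$ is a supremum of affine functions of ${\bf\varrho}$, it is convex, and hence $\{{\bf\varrho}:g_X({\bf\varrho})\leq1\}$ is exactly the interval $[\nu,\mu]$ of Definition \ref{definition fuenf} (closed at finite ends by the attainment proposition). In this language the conjecture asserts that the interval $\{{\bf\varrho}:g_{A\times B}({\bf\varrho})\leq1\}$ depends on the factors only through the intervals $[\nu_A,\mu_A]$, $[\nu_B,\mu_B]$ and the exponent $p$.

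Next I would compute $g_{A\times B}$. For $p$-unit vectors $\vec{U}=(\vec{a}_1,\vec{b}_1)$ and $\vec{V}=(\vec{a}_2,\vec{b}_2)$ one has ${\bf s}^{2}=(\|\vec{a}_1+\vec{a}_2\|_A^{p}+\|\vec{b}_1+\vec{b}_2\|_B^{p})^{2/p}$ and the analogous expression for ${\bf d}^{2}$, from which $\sigma$ and $\tau$ for the product are assembled. Taking the supremum over all such pairs turns $g_{A\times B}({\bf\varrho})$ into a constrained optimization over the ``splitting'' $\|\vec{a}_i\|_A^{p}+\|\vec{b}_i\|_B^{p}=1$ together with the directions in each factor. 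A first, cheap observation is the embedding of the pure pairs $\vec{U}=(\vec{a}_1,\vec{0})$, $\vec{V}=(\vec{a}_2,\vec{0})$ with $\vec{a}_i$ unit in $A$ (and symmetrically for $B$): these reproduce the factor values of $(\sigma,\tau)$, so $g_{A\times B}\geq\max(g_A,g_B)$ pointwise and therefore
\[
 \nu_{A\times B}\ \geq\ \max(\nu_A,\nu_B),\qquad \mu_{A\times B}\ \leq\ \min(\mu_A,\mu_B).
\]
These inequalities are however \emph{not} equalities: taking $A,B$ to be Euclidean lines and $p\neq2$ gives a product $(\mathbbm{R}^{2},\|\cdot\|_{p})$ which is a normed space but not an inner product space, so by Theorem \ref{zweites theorem} it fails CSB for some ${\bf\varrho}$ and hence has $-\infty<\nu\leq\mu<\infty$, whereas $\max(\nu_A,\nu_B)=-\infty$ and $\min(\mu_A,\mu_B)=+\infty$. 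Thus genuinely mixed pairs move the endpoints and $p$ enters essentially; the product interval is strictly finer than the $\min/\max$ bound.

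The main obstacle is precisely this mixed contribution. The hypothesis $\Upsilon(A)=\Upsilon(C)$ fixes only the sign pattern of $g_A$, i.e. the set $\{g_A\leq1\}=[\nu_A,\mu_A]$, and says nothing about the actual values of $g_A$ on the interior of that interval. But in the product optimization the factor norms $\|\vec{a}_1\pm\vec{a}_2\|_A$ are evaluated at \emph{unequal, non-unit} arguments, which probes the geometry of $A$'s unit ball far beyond the two boundary exponents $\nu_A,\mu_A$; a priori two spaces $A,C$ with equal $\Upsilon$ but different interior behaviour of $g$ could yield different product endpoints. Consequently a proof must supply a \emph{reduction lemma}: that the supremum defining $g_{A\times B}({\bf\varrho})$ at the critical exponents ${\bf\varrho}=\nu_{A\times B}$ and ${\bf\varrho}=\mu_{A\times B}$ is attained (in the limit) by configurations touching each factor only at \emph{its own} CSB boundary --- equivalently, that the extremizing split is balanced, $\|\vec{a}_1\|_A=\|\vec{a}_2\|_A$ and $\|\vec{b}_1\|_B=\|\vec{b}_2\|_B$, and uses factor pairs that are themselves CSB-extremal, so that only $g_A,g_B$ evaluated at their endpoints survive.

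To establish such a lemma I would exploit the convexity of $\log g_X$ together with an exchange/averaging argument on the splitting parameters, aiming to show that replacing an unbalanced split by the balanced one does not decrease $\tau\sigma^{\bf\varrho}$ at the relevant ${\bf\varrho}$, and that within a factor the extremizer may be pushed to the boundary of $\{g\leq1\}$ using the monotonicity in Theorem \ref{erstes theorem}. The hard point --- and, I suspect, the reason the statement is left as a conjecture --- is that no control of the \emph{interior} values of $g_A$ is available from the results proved above: Theorem \ref{erstes theorem} governs only the nesting of the classes $\mathsf{pdBW}_{\bf\varrho}$, not the precise size of $g_A$ strictly inside $[\nu_A,\mu_A]$. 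A successful proof would therefore either upgrade Theorem \ref{erstes theorem} to a quantitative comparison invariant under $\Upsilon$-equivalence, or exhibit a concrete pair $A,C$ of equal $\Upsilon$ whose products with a fixed $B$ differ, thereby refuting the conjecture.
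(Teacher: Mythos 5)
First, a point of orientation: the statement you were given is not a proved result of the paper at all --- it is stated there as an open \emph{conjecture}, with no proof offered --- so there is no argument of the author's to compare yours against; the only question is whether your text settles the problem, and it does not (as you yourself concede in your closing paragraph). What you do establish is correct and genuinely useful as a reformulation: setting $g_X({\bf \varrho}) := \sup \tau\sigma^{\bf \varrho}$ over pairs of unit vectors, so that $(X,\|\cdot\|)$ has the angle $\angle_{\bf \varrho}$ exactly when $g_X({\bf \varrho})\leq 1$; observing that $\log g_X$ is a supremum of affine functions of ${\bf \varrho}$, hence convex and lower semicontinuous, so that $\{{\bf \varrho} : g_X({\bf \varrho})\leq 1\}$ is a closed interval containing $-1$ (this in fact recovers both Theorem \ref{erstes theorem} and the attainment proposition more cleanly than the paper's case analysis); the bounds $\nu_{A\times B}\geq\max(\nu_A,\nu_B)$ and $\mu_{A\times B}\leq\min(\mu_A,\mu_B)$ coming from pure pairs $(\vec{a},\vec{0})$, $(\vec{0},\vec{b})$; and the observation, via Theorem \ref{zweites theorem}, that these bounds cannot be equalities. (One small overclaim there: for Euclidean lines $A,B$ and $p\geq 1$, $p\neq 2$, Theorem \ref{zweites theorem} yields only that $[\nu,\mu]\neq(-\infty,+\infty)$ for the product, not that both endpoints are finite; the weaker statement still suffices for your purpose.)

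The genuine gap is that your entire argument funnels into the ``reduction lemma,'' and that lemma is not a stepping stone toward the conjecture --- it \emph{is} the conjecture. Asserting that the extremizing configurations for $g_{A\times B}$ at the critical exponents interact with each factor only through the data $[\nu_A,\mu_A]$, $[\nu_B,\mu_B]$ is precisely the assertion that $\Upsilon(A\times B,\|\cdot\|_p)$ is a function of $\Upsilon(A)$, $\Upsilon(B)$ and $p$ alone. You offer no mechanism for proving it: convexity of $\log g_X$ concerns the dependence on ${\bf \varrho}$ and says nothing about the optimization over splittings $\|\vec{a}_i\|_A^p+\|\vec{b}_i\|_B^p=1$; Theorem \ref{erstes theorem} gives only the nesting of the classes $\mathsf{pdBW}_{\bf \varrho}$, not quantitative control of $g_A$ inside $[\nu_A,\mu_A]$; and your own Euclidean-lines example shows that mixed pairs, evaluated at non-unit and non-antipodal arguments in each factor, genuinely dominate the supremum --- which is evidence \emph{against} any easy exchange argument pushing extremizers to pure or balanced configurations. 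So what you have written is a correct and well-organized reformulation of the problem together with an accurate diagnosis of why it is hard (the unit-ball geometry of $A$ that the product probes is not determined by $\Upsilon(A)$), but it is not a proof, and the conjecture is exactly as open after your text as before it. If you want to make progress along your own lines, the most promising concrete step is the one you mention last: hunt for two spaces $A,C$ with $\Upsilon(A)=\Upsilon(C)$ but different interior behaviour of $g$, and test whether their products with a fixed $B$ separate --- a counterexample seems at least as likely as a proof.
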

  $ $ \\ 
   At the end we try to find an `algebraic structure' on the class  $\mathsf{pdBW}_{\bf \alpha} $,
   for a fixed  number $ \alpha $. For two elements  
   $ (A , \| \cdot \|_A), (B , \| \cdot \|_B) $ of $ \mathsf{pdBW}_{\bf \alpha}$ \
   we look  for  a  weight  $ \| \cdot \|_{A \times B} $  on $ A \times B $ such that the pair
   $ \left( A \times B,\| \cdot \|_{A \times B} \right) $  is an element of  $ \mathsf{pdBW}_{\bf \alpha} $, too. 
   Before we make the conjecture we consider an example. 
                                       
 Take two copies of the  real numbers   
   $ \mathbbm{R} $  provided with the usual Euclidean metric $ | \cdot | $.   The pair  
   $ ( \mathbbm{R},  | \cdot |) $ is an inner product space and hence an element of  
   $ \mathsf{pdBW}_{\bf \varrho}$  for all real numbers $ \varrho $, see  Theorem  \ref{zweites theorem}.   
   We take the Cartesian product   $ \mathbbm{R}^{2} := \mathbbm{R} \times  \mathbbm{R} $  and we provide it with a 
   H\"older weight  $\| \cdot \|_{p}$. But the pair   $ (\mathbbm{R}^{2}, \| \cdot \|_{p}) $ is an inner product space
   only for $ p= 2$.  Hence it is an element of the classes $ \mathsf{pdBW}_{\bf \varrho}$ for each $ {\bf \varrho} $ 
   only for $ p=2$. \  This example leads to a natural question. 
    \begin{conjecture}   
             Let $ \alpha $  be a  fixed real number. Let \  
             $ (A , \| \cdot \|_A), (B , \| \cdot \|_B) $ be two elements of $ \mathsf{pdBW}_{\bf \alpha}$,
             i.e. they have the angle  $ \angle_{\bf \alpha}$. \ We consider the product vector space $ A \times B $.   
       $$   \text { We ask whether the positive definite $ \mathsf{BW}$ space } \
            \left(A \times B, \| \cdot \|_2\right)  \text{ has the angle } \angle_{\bf \alpha}  \text{, too} .  $$  
     \end{conjecture}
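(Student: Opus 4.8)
The plan is first to translate the assertion ``$(A\times B,\|\cdot\|_2)$ has the angle $\angle_{\bf\alpha}$'' into the statement that the triple $\bigl(A\times B,\|\cdot\|_2,<.\,|\,.>_{\bf\alpha}\bigr)$ satisfies the {\sf CSB} inequality, as in Definition~\ref{Definition drei}, and then to reduce this to the two {\sf CSB} inequalities that the hypothesis $(A,\|\cdot\|_A),(B,\|\cdot\|_B)\in\mathsf{pdBW}_{\bf\alpha}$ hands us. Since the product $<.\,|\,.>_{\bf\alpha}$ is homogeneous (Lemma~\ref{lemma 2}) and $\|\cdot\|_2$ is absolutely homogeneous, the inequality only has to be verified on unit vectors. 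I would therefore fix two unit vectors $\widehat X=(\vec a_1,\vec b_1)$ and $\widehat Y=(\vec a_2,\vec b_2)$ of $(A\times B,\|\cdot\|_2)$, i.e. with $\|\vec a_1\|_A^2+\|\vec b_1\|_B^2=\|\vec a_2\|_A^2+\|\vec b_2\|_B^2=1$, and compute the four numbers ${\bf s},{\bf d},{\bf \Sigma},{\bf\Delta}$ for them.

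Because $\|\cdot\|_2$ makes the squared norm split additively, $\|\widehat X\pm\widehat Y\|_2^2=\|\vec a_1\pm\vec a_2\|_A^2+\|\vec b_1\pm\vec b_2\|_B^2$, I get a clean splitting $\frac14{\bf\Delta}=\delta_A+\delta_B$ and $\frac14{\bf\Sigma}=\sigma_A+\sigma_B$, where $\delta_A:=\frac14\bigl(\|\vec a_1+\vec a_2\|_A^2-\|\vec a_1-\vec a_2\|_A^2\bigr)$ and $\sigma_A:=\frac14\bigl(\|\vec a_1+\vec a_2\|_A^2+\|\vec a_1-\vec a_2\|_A^2\bigr)$, and analogously $\delta_B,\sigma_B$ in $B$. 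The goal inequality is thus $|\delta_A+\delta_B|\cdot(\sigma_A+\sigma_B)^{\bf\alpha}\le1$. Writing $\vec a_i=\rho_i\vec p_i$ and $\vec b_i=\tau_i\vec q_i$ with unit vectors $\vec p_i\in A,\ \vec q_i\in B$ and $\rho_i^2+\tau_i^2=1$, the hypothesis supplies $|\delta_A^{\mathrm u}|(\sigma_A^{\mathrm u})^{\bf\alpha}\le1$ and $|\delta_B^{\mathrm u}|(\sigma_B^{\mathrm u})^{\bf\alpha}\le1$, where $\delta_A^{\mathrm u},\sigma_A^{\mathrm u}$ denote the same expressions formed from the \emph{unit} vectors $\vec p_1,\vec p_2$.

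A decisive special case is the ``diagonal'' one $\rho_1=\rho_2=:\rho$, $\tau_1=\tau_2=:\tau$. Here the joint degree-two homogeneity of the weight gives $\delta_A=\rho^2\delta_A^{\mathrm u}$, $\sigma_A=\rho^2\sigma_A^{\mathrm u}$ and likewise with $\tau^2$ in $B$, so that (using $\rho^2+\tau^2=1$) both $\frac14{\bf\Delta}$ and $\frac14{\bf\Sigma}$ become \emph{convex combinations}, with weight $t:=\rho^2$, of the two admissible pairs $(\sigma^{\mathrm u},\delta^{\mathrm u})$. The target inequality then says that the sublevel region $\{(\sigma,\delta):\sigma>0,\ |\delta|\le\sigma^{-\bf\alpha}\}$ is preserved under convex combinations, which holds precisely when $\sigma\mapsto\sigma^{-\bf\alpha}$ is concave, i.e. exactly for ${\bf\alpha}\in[-1,0]$. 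Thus the convexity argument settles the diagonal case for ${\bf\alpha}\in[-1,0]$ and, reassuringly, is consistent with $\mathsf{pdBW}_{\bf-1}=\mathsf{pdBW}$ (Proposition~\ref{corollary eins}) and with the inner-product case of Theorem~\ref{zweites theorem}.

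The main obstacle is everything this clean picture omits. First, for ${\bf\alpha}\notin[-1,0]$ the region above is non-convex, so even on the diagonal the crude bound $|\delta_A+\delta_B|\le t|\delta_A^{\mathrm u}|+(1-t)|\delta_B^{\mathrm u}|$ fails, and one must exploit the genuine geometric constraints ($|{\bf\Delta}|\le{\bf\Sigma}$, and boundedness of ${\bf\Sigma}$ when the factors are normed) or a case split on ${\bf\Sigma}\gtrless4$ as in Theorem~\ref{erstes theorem}. Second, and more seriously, in the off-diagonal case $\rho_1\neq\rho_2$ the absolutely homogeneous weight is \emph{not} bilinear, so $\delta_A=\frac14\bigl(\|\rho_1\vec p_1+\rho_2\vec p_2\|_A^2-\|\rho_1\vec p_1-\rho_2\vec p_2\|_A^2\bigr)$ is not any simple function of $\rho_1\rho_2$ and the unit data $\delta_A^{\mathrm u},\sigma_A^{\mathrm u}$; the radial parameters couple to the directional information in an uncontrolled way, exactly the coupling that disappears in an inner product space. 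Overcoming this is the heart of the conjecture, and I would attack it either by proving a monotonicity lemma that the worst case of $|\delta_A+\delta_B|(\sigma_A+\sigma_B)^{\bf\alpha}$ occurs on the diagonal, or, should that fail, by hunting among the positive-definite examples of Section~\ref{section three} for a counterexample when ${\bf\alpha}$ lies outside $[-1,0]$.
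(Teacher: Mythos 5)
You should first be aware that the paper contains no proof of this statement: it is the paper's closing Conjecture, posed explicitly as an open question (``We ask whether \dots''), so there is no proof of record to compare yours against, and you cannot be faulted for not settling it. Judged as a proof, however, your proposal has a genuine gap, and it is exactly the one you name yourself: it verifies the {\sf CSB} inequality for $\left(A \times B, \|\cdot\|_2\right)$ only for ``diagonal'' pairs of unit vectors (equal radial parts $\rho_1 = \rho_2$, $\tau_1 = \tau_2$) and only for ${\bf \alpha} \in [-1,0]$; the off-diagonal pairs, and every ${\bf \alpha}$ outside $[-1,0]$, are left open, with two possible attacks sketched but not carried out. A partial argument of this kind does not establish the conjecture.

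The partial results themselves are sound and worth recording. The translation of ``has the angle $\angle_{\bf \alpha}$'' into $\left|\frac{1}{4} {\bf \Delta}\right| \cdot \left(\frac{1}{4} {\bf \Sigma}\right)^{\bf \alpha} \leq 1$ for all pairs of unit vectors is exactly Definition \ref{Definition drei} combined with homogeneity; the splitting $\frac{1}{4}{\bf \Delta} = \delta_A + \delta_B$, $\frac{1}{4}{\bf \Sigma} = \sigma_A + \sigma_B$ is a correct consequence of the additivity of $\|\cdot\|_2^2$; and in the diagonal case absolute homogeneity does give $(\sigma,\delta) = t\,(\sigma_A^{\mathrm u},\delta_A^{\mathrm u}) + (1-t)\,(\sigma_B^{\mathrm u},\delta_B^{\mathrm u})$ with $t = \rho^2$, so the question becomes convexity of the region $\{(\sigma,\delta) \ : \ \sigma > 0, \ |\delta| \leq \sigma^{-{\bf \alpha}}\}$, which indeed holds if and only if $\sigma \mapsto \sigma^{-{\bf \alpha}}$ is concave, i.e. ${\bf \alpha}({\bf \alpha}+1) \leq 0$, i.e. ${\bf \alpha} \in [-1,0]$. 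Note, though, that ${\bf \alpha} = -1$ is vacuous here, since $\mathsf{pdBW}_{\bf -1} = \mathsf{pdBW}$ by Proposition \ref{corollary eins} and the product is positive definite; so your diagonal argument has real content only for ${\bf \alpha} \in (-1,0]$. What stands between you and the conjecture is precisely the coupling you identify: for $\rho_1 \neq \rho_2$ the quantities $\|\rho_1 \vec{p}_1 \pm \rho_2 \vec{p}_2\|_A$ are not determined by $\rho_1, \rho_2$ and the unit-vector data $\delta_A^{\mathrm u}, \sigma_A^{\mathrm u}$ --- that determination is exactly what bilinearity, hence an inner product, would provide --- so no convex-combination bound applies; and for ${\bf \alpha} \notin [-1,0]$ even the diagonal case is unproven, since the sublevel region is then non-convex. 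Your two proposed continuations --- a monotonicity lemma showing the worst case occurs on the diagonal, or a counterexample hunt among the weights of Section \ref{section three} (the hexagon weights of the later sections are natural candidates as well) --- are reasonable, but both remain unexecuted, so the statement is exactly as open as the paper leaves it.
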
                                               
      { $ $ }   \\ 
		  { \bf Acknowledgements: }   \rm  We like to thank Prof. Dr. Eberhard  Oeljeklaus 
     who supported us by interested discussions and important advices, and who suggested many improvements. \  
     Further we thank Berkan G\"urgec for a lot of technical aid.
  \bibliographystyle{mn}
     { $ $ }   \\ 
 %
 %
   \end{document}